\newtheorem{theorem}{Theorem}[section]
\newtheorem{definition}[theorem]{Definition}
\newtheorem{proposition}[theorem]{Proposition}
\newtheorem{lemma}[theorem]{Lemma}
\begin{document}

\title{A duality principle for noncommutative cubes and spheres}

\author{Teodor Banica}
\address{T.B.: Department of Mathematics, Cergy-Pontoise University, 95000 Cergy-Pontoise, France. {\tt teodor.banica@u-cergy.fr}}

\subjclass[2000]{46L65 (46L54, 46L87)}
\keywords{Quantum isometry, Noncommutative sphere}

\begin{abstract}
We discuss a general duality principle, between noncommutative analogues of the standard cube $\mathbb Z_2^N$, and nonocommutative analogues of the standard sphere $S^{N-1}_\mathbb R$. This duality is by construction of algebraic geometric nature, and conjecturally connects the corresponding quantum isometry groups, taken in an affine sense.
\end{abstract}

\maketitle

\section*{Introduction}

Woronowicz axiomatized in \cite{wo1}, \cite{wo2} the compact quantum groups, and explained how the Schur-Weyl problem can be solved for the deformations of $SU_N$. This paved the way for a number of further developements.  Wang discovered in \cite{wa1}, \cite{wa2} the free quantum groups $O_N^+,U_N^+,S_N^+$, whose Tannakian duals were computed in \cite{ba1}, \cite{ba2}. Later on, a link was made with the work of Bichon \cite{bic} and Collins \cite{col}, and a systematic study, mainly focusing on the symmetry groups $G\subset S_N^+$ of finite graphs, was developed. See \cite{bbc}.

Goswami's axiomatization in \cite{go1}, \cite{go2} of the quantum isometry groups made it possible to reconcile the continuous computations in \cite{ba1}, for $O_N^+,U_N^+$, with the various computations for $S_N^+$ and its subgroups, as those in \cite{ba2}, \cite{bbc}. The point indeed is that $O_N^+,U_N^+$ appear as quantum isometry groups of the free spheres $S^{N-1}_{\mathbb R,+}$, $S^{N-1}_{\mathbb C,+}$, while $S_N^+$ and related quantum groups appear from discrete manifolds. See \cite{ba3}, \cite{ba4}, \cite{bgo}, \cite{bg1}, \cite{bgs}.

At the level of potential applications, the link with Connes' work \cite{co1}, \cite{co2} brought as well a substantial upgrade. Indeed, while the classical, connected manifolds cannot have genuine quantum isometries \cite{dgj}, for noncommutative manifolds like the Standard Model one \cite{cc1}, \cite{cc2} the quantum isometry group is bigger than the usual isometry group, containing therefore ``hidden'' symmetries, worth to be investigated. See \cite{bdd}, \cite{bd+}.

Short after the unification coming from \cite{go1}, the representation theory program for quantum isometry groups got once again ``dispersed'', this time due to a key connection with Voiculescu's free probability theory \cite{vdn}. K\"ostler and Speicher discovered in \cite{ksp} that a free de Finetti theorem holds, with $S_N$ replaced by $S_N^+$. Curran found a bit later a more advanced proof, and generalizations, using the Weingarten formula \cite{cu1}, \cite{cu2}. These results, along with \cite{spe}, suggested a whole new approach to probabilistic invariance questions, by axiomatizing and classifying the compact quantum groups having an ``elementary'' Tannakian dual, and then by studying the actions of such quantum groups on random variables. The axiomatization and some preliminary classification work were done in \cite{bc1}, \cite{bsp}, \cite{bve}, \cite{web}, and the corresponding invariance questions were investigated in \cite{bc2}. The whole idea ended up in producing a very active field of research. See \cite{bdu}, \cite{csp}, \cite{dko}, \cite{fre}, \cite{fwe}, \cite{rw1}, \cite{rw2}, \cite{rw3}, \cite{rw4}.

Regarding now the original geometric motivations, which are somehow obscured by the combinatorial axiomatization in \cite{bsp}, there have been several advances here:
\begin{enumerate}
\item The quantum isometries of various noncommutative spheres were investigated in \cite{ba3}, \cite{ba4}, \cite{bgo}, \cite{bg1}, \cite{bg2}. In all cases the quantum groups found are covered by the formalism in \cite{bsp}, or appear as deformations of such quantum groups.

\item The quantum isometries of various group duals were investigated in \cite{bs1}, \cite{bs2}, \cite{gma}, \cite{lso}, \cite{rw2}. Once again, for the basic examples, the quantum groups found are covered by the formalism in \cite{bsp}, and its 2-parametric extensions.
\end{enumerate}

The aim of the present paper is that of linking (1,2) by a general duality principle. The idea is very simple. Consider the standard cube $Y_N=\{-1,1\}^N\subset\mathbb R^N$. We have then an isomorphism $C^*(\mathbb Z_2^N)\simeq C(Y_N)$, given by $g_i\to x_i$, which gives an identification $\widehat{\mathbb Z_2^N}\simeq Y_N$. By rescaling by $1/\sqrt{N}$ we obtain an embedding $\widehat{\mathbb Z_2^N}\subset S^{N-1}_\mathbb R$, as follows:
$$\begin{matrix}
S^{N-1}_\mathbb R&=&\left\{x\in\mathbb R^N\Big|\sum_ix_i^2=1\right\}\\
\\
\bigcup&&\bigcup\\
\\
\widehat{\mathbb Z_2^N}&\simeq&\left\{x\in\mathbb R^N\Big|x_i=\pm\frac{1}{\sqrt{N}},\forall i\right\}
\end{matrix}$$

The point now is that this embedding appears as the $\Gamma=\mathbb Z_2^N$ particular case of a general inclusion of type $\widehat{\Gamma}\subset S_\Gamma$, where $\Gamma=<g_1,\ldots,g_N>$ is a reflection group, satisfying certain uniformity assumptions. Based on this remark, we will develop some theory:
\begin{enumerate}
\item First, we will extend the undeformed noncommutative sphere formalism in \cite{ba3}, \cite{ba4}, \cite{bgo}, as to cover objects of type $S_\Gamma$, as well as their twists $\bar{S}_\Gamma$.

\item We will study the spaces of type $\widehat{\Gamma}$, with $\Gamma=<g_1,\ldots,g_N>$ being a reflection group as above, that we call here ``noncommutative cubes''.

\item We will establish a correspondence $\widehat{\Gamma}\leftrightarrow S_\Gamma$, and we will discuss the comparison of the corresponding quantum isometry groups, taken in an affine sense.
\end{enumerate}

There is in fact a lot of work to be done here. We have as well a number of conjectural statements on the subject, for the most regarding the geometric realization, as quantum isometry groups, of the easy quantum groups $H_N\subset G\subset O_N^+$, and their twists.

We refer to the body of the paper for the precise statements of the results. The proofs are based on our previous work on noncommutative spheres in \cite{ba3}, \cite{ba4}, \cite{bgo}, \cite{bg1}, and on the classification work of Raum and Weber in \cite{rw1}, \cite{rw2}, \cite{rw3}, \cite{rw4}. Let us also mention that, at the axiomatic level, we use a formalism inspired from \cite{chi}, \cite{go2}, \cite{qsa}.

There are many questions raised by the present work. Here are some of them:
\begin{enumerate}
\item The spheres and other manifolds that we consider here are ``undeformed''. In the deformed case there are many interesting examples, see e.g. \cite{cdu}, \cite{cla}, \cite{ddl}, \cite{pod}. This raises the non-trivial question of ``deforming'' the present work.

\item Our manifolds are algebraic, and the study of their singularities/smoothness, and Riemannian aspects, remains an open problem. There are several questions here, in relation with \cite{cfk}, \cite{co1}, \cite{co2}, \cite{dgo}, already discussed in \cite{ba3}, \cite{ba4}.

\item We are dealing here with very simple manifolds, generalizing the unit cube and sphere. One interesting question regards the general geometric formulation of the notions of liberation and half-liberation, coming from \cite{bsp}, \cite{bpa}, \cite{bdd}, \cite{bdu}. 
\end{enumerate}

Further questions concern the unitary extension of the present work. Nor do we know on how to best interpret the probabilistic invariance questions studied in \cite{bc2} and in subsequent papers, as to make them fit into the present geometric setting.

The paper is organized as follows: in 1-2 we discuss the easy quantum groups $H_N\subset G\subset O_N^+$, in 3-4 we study the noncommutative cubes and spheres, and in 5-6 we present the duality principle, along with a number of consequences and extensions.

\medskip

\noindent {\bf Acknowledgements.} I would like to thank Steve Curran, Adam Skalski and Roland Speicher for various useful discussions, Alexandru Chirvasitu for a key remark regarding the quantum isometry groups, Sven Raum and Moritz Weber for keeping me informed on the advances in their classification work, and an anonymous referee for valuable suggestions. This work was partly supported by the NCN grant 2012/06/M/ST1/00169.

\section{Easy quantum groups}

We first recall the axiomatization of the easy quantum groups, from \cite{bsp}. We denote by $P(k,l)$ the set of partitions between an upper row of $k$ points, and a lower row of $l$ points. We will regard the elements of $P(k,l)$ in a pictorial way, with the upper and lower points, called ``legs'', connected by the blocks of the partition, called ``strings''.

The elements of $P(k,l)$ naturally act on tensors, as follows:

\begin{definition}
Associated to a partition $\pi\in P(k,l)$ is the linear map
$$T_\pi(e_{i_1}\otimes\ldots\otimes e_{i_k})=\sum_{j:\ker(^i_j)\leq\pi}e_{j_1}\otimes\ldots\otimes e_{j_l}$$
where $e_1,\ldots,e_N$ is the standard basis of $\mathbb C^N$.
\end{definition}

Here the kernel of a multi-index $(^i_j)=(^{i_1\ldots i_k}_{j_1\ldots j_l})$ is by definition the partition $\tau\in P(k,l)$ obtained by joining the sets of equal indices. Thus, the condition $\ker(^i_j)\leq\pi$ simply tells us that the strings of $\pi$ must join equal indices. Here are a few examples:
$$T_{|\,|}(e_i\otimes e_j)=e_i\otimes e_j\quad,\quad
T_{\slash\!\!\!\backslash}(e_i\otimes e_j)=e_j\otimes e_i$$
$$T_\cap(1)=\sum_ie_i\otimes e_i\quad,\quad
T_\cup(e_i\otimes e_j)=\delta_{ij}$$

Now let $O_N^+$ be the free analogue of $O_N$, constructed by Wang in \cite{wa1}. This is by definition the abstract spectrum of the universal algebra $C(O_N^+)$ generated by the entries of a $N\times N$ matrix $u=(u_{ij})$ which is orthogonal ($u=\bar{u},u^t=u^{-1}$), with comultiplication $\Delta(u_{ij})=\sum_ku_{ik}\otimes u_{kj}$, counit $\varepsilon(u_{ij})=\delta_{ij}$ and antipode $S(u_{ij})=u_{ji}$. We have:

\begin{definition}
A compact quantum group $G\subset O_N^+$ is called easy when 
$$Hom(u^{\otimes k},u^{\otimes l})=span(T_\pi|\pi\in D(k,l))$$
for any $k,l\in\mathbb N$, for certain subsets $D(k,l)\subset P(k,l)$.
\end{definition}

In other words, we call $G$ easy when its Schur-Weyl category, formed by the linear spaces $Hom(u^{\otimes k},u^{\otimes l})$, appears in the simplest possible way: from partitions.

The above subsets $D(k,l)\subset P(k,l)$ are not unique. In order to make them unique, we can ``saturate'', i.e. replace them by the biggest possible subsets $\widetilde{D}(k,l)\subset P(k,l)$ making the span formula hold. With this replacement made, $D=\bigcup_{kl}D(k,l)$ has a number of remarkable properties, and we say that we have a category of partitions. See \cite{bsp}.

We will be interested in the intermediate easy quantum groups $H_N\subset G\subset O_N^+$, where $H_N$ is the hyperoctahedral group. The main examples here are as follows:

\begin{proposition}
We have easy quantum groups $H_N\subset G\subset O_N^+$ as follows,
$$\xymatrix@R=15mm@C=15mm{
O_N\ar[r]&O_N^*\ar[r]&O_N^+\\
H_N\ar[r]\ar[u]&H_N^*\ar[r]\ar[u]&H_N^+\ar[u]}
\ \ \xymatrix@R=9mm@C=5mm{\\ \ar@{~}[r]&\\&\\}\ \ 
\xymatrix@R=15mm@C=15mm{
P_2\ar[d]&P_2^*\ar[l]\ar[d]&NC_2\ar[l]\ar[d]\\
P_{even}&P_{even}^*\ar[l]&NC_{even}\ar[l]}$$
with the diagram at right describing the corresponding categories of partitions.
\end{proposition}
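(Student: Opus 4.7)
The plan is to apply Woronowicz's Tannakian duality and match, for each of the six quantum groups in the diagram, its defining relations with a small family of generating partitions, the underlying principle being that imposing a relation on the entries of $u$ is equivalent to adjoining the corresponding intertwiner $T_\pi$ to the Schur-Weyl category. The baseline is $O_N^+$: the defining orthogonality relations $uu^t = u^t u = 1$ translate exactly into $T_\cap$ being an intertwiner between $1$ and $u^{\otimes 2}$, and since $NC_2$ is generated by $\cap$ under tensor product, composition, and involution, this exhibits $O_N^+$ as easy with partition category $NC_2$.

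Reading the columns from left to right, the extra relation on the entries of $u$ is full commutativity, half-commutation $abc = cba$, and nothing. Full commutativity $[u_{ij}, u_{kl}] = 0$ is exactly $T_{\slash\!\!\!\backslash} \in Hom(u^{\otimes 2}, u^{\otimes 2})$, and adjoining $\slash\!\!\!\backslash$ to $NC_2$ closes up to $P_2$, giving the category for $O_N$. The half-commutation relation is equivalent to the membership in the category of a certain pair partition on six points, the one arising from a single ``triple crossing''; adding it to $NC_2$ closes up to $P_2^*$ and produces $O_N^*$. This last identification is the main subtlety, and I would follow \cite{bsp} for the combinatorial verification.

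For the $H$-row, the extra relation is the hyperoctahedral one, namely $u_{ij}u_{ik} = u_{ji}u_{ki} = 0$ for $j \neq k$, saying that the entries of $u$ behave like matrix units up to sign. This is encoded by a four-leg partition with a single block joining all four legs, whose associated intertwiner implements precisely these vanishings; adding it to each of $NC_2, P_2^*, P_2$ produces respectively $NC_{even}, P_{even}^*, P_{even}$. I would then close by checking compatibility: the assignment from partition category to quantum group via Tannakian reconstruction is inclusion-reversing, so the commutative diagram of categories on the right produces the claimed commutative diagram of quantum groups on the left, with the six inclusions being strict. The main obstacle throughout is showing that the chosen generating partitions really close up to the claimed categories and not to larger or smaller ones; for $P_2^*$ and $P_{even}^*$ this rests on the combinatorial analysis of \cite{bsp}, while the analogous verification for the other four categories is routine and essentially present in \cite{ba1}, \cite{ba2}.
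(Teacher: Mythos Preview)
Your sketch is correct and is precisely the standard Tannakian argument: match each defining relation with a generating partition, then check that the resulting categories close up as claimed. The paper itself does not prove this proposition; it simply refers to \cite{bc1} for details and records the definitions of $O_N^*,H_N^*$ and of $P_2^*,P_{even}^*$, so your outline is in fact more explicit than what the paper provides. One small correction on references: the verifications you attribute to \cite{ba1}, \cite{ba2} are not there (those papers concern $U_N^+$ and $S_N^+$); the relevant sources are \cite{bsp}, \cite{bve} for $P_2^*$ and $O_N^*$, and \cite{bbc}, \cite{bc1} for the hyperoctahedral row and the even-block categories.
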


We refer to \cite{bc1} for details. Let us just mention that $H_N^+$ is the quantum group constructed in \cite{bbc}, that $O_N^*\subset O_N^+$, $H_N^*\subset H_N^+$ are obtained by assuming that the standard coordinates $u_{ij}$ satisfy the half-commutation relations $abc=cba$, and that $P_{even}^*\subset P_{even},P_2^*\subset P_2$ consist of partitions having the property that when labelling counterclockwise the legs $\circ\bullet\circ\bullet\ldots$, each block has an equal number of black and white legs.

There are many other examples of easy quantum groups $H_N\subset G\subset O_N^+$, and we will need in what follows quite a substantial amount of information about such quantum groups, including their classification, coming from \cite{rw4}. Let us begin with:

\begin{definition}
We let $P_{even}^{[\infty]}$ be the category generated by the partition
$$\xymatrix@R=2mm@C=3mm{\\ \\ \eta\ \ =\\ \\}\ \ \ 
\xymatrix@R=2mm@C=3mm{
\circ\ar@/_/@{-}[dr]&&\circ&&\circ\ar@{.}[ddddllll]\\
&\ar@/_/@{-}[ur]\ar@{-}[ddrr]\\
\\
&&&\ar@/^/@{-}[dr]\\
\circ&&\circ\ar@/^/@{-}[ur]&&\circ}$$
and we denote by $H_N^{[\infty]}$ the corresponding easy quantum group $H_N\subset G\subset H_N^+$.
\end{definition}

The elements $\pi\in P_{even}^{[\infty]}$ can be characterized by the fact that all their subpartitions $\sigma\subset\pi$ satisfy $\sigma\in P_{even}^*$. As an example, the verification of $\eta\in P_{even}^{[\infty]}$ goes as follows:
$$\xymatrix@R=2mm@C=3mm{
\bullet\ar@/_/@{-}[dr]&&\circ&&\bullet\ar@{.}[ddddllll]\\
&\ar@/_/@{-}[ur]\ar@{-}[ddrr]\\
\\
&&&\ar@/^/@{-}[dr]\\
\circ&&\bullet\ar@/^/@{-}[ur]&&\circ}\qquad\qquad\quad
\xymatrix@R=2mm@C=3mm{
\bullet\ar@/_/@{-}[dr]&&\circ\\
&\ar@/_/@{-}[ur]\ar@{-}[dd]\\
\\
&\ar@/^/@{-}[dr]\\
\circ\ar@/^/@{-}[ur]&&\bullet}\qquad\qquad\quad
\xymatrix@R=15.4mm@C=7mm{\bullet\ar@{-}[d]\\ \circ}
$$

Regarding now the quantum group $H_N^{[\infty]}$, it is known that this contains $H_N^*$, and also that $H_N^{[\infty]}\subset O_N^+$ appears by assuming that the standard coordinates $u_{ij}$ satisfy the relations $abc=0$, for any $a\neq c$ on the same row or column of $u$. See \cite{bc1}.

The point with $H_N^{[\infty]}$ comes from the following result:

\begin{proposition}
The easy quantum groups $H_N\subset G\subset O_N^+$ are as follows,
$$\xymatrix@R=15mm@C=20mm{
O_N\ar[r]&O_N^*\ar[r]&O_N^+\\
H_N\ar@.[r]\ar[u]&H_N^{[\infty]}\ar@.[r]&H_N^+\ar[u]}$$
with the dotted arrows indicating that we have intermediate quantum groups there.
\end{proposition}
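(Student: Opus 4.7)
The plan is to reorganize the Raum--Weber classification \cite{rw4} of easy quantum groups $H_N\subset G\subset O_N^+$---equivalently, of categories of partitions $D$ with $NC_2\subset D\subset P_{even}$---into the three-column structure displayed in the diagram.

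The first step is to establish a trichotomy based on the pairings in $D$. For any such $D$ the intersection $D\cap P_2$ is a category of pair partitions, and the classification of easy quantum groups with $S_N\subset G\subset O_N^+$ whose categories consist only of pairings (cf.\ \cite{bc1}) leaves only three possibilities: $D\cap P_2\in\{P_2,\,P_2^*,\,NC_2\}$. This places $G$ in one of three columns according to whether $G\subset O_N$, $G\subset O_N^*$ with $G\not\subset O_N$, or $G\not\subset O_N^*$, and at the same time identifies $O_N$, $O_N^*$, $O_N^+$ as the top of the respective columns.

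The second step is to locate $H_N^{[\infty]}$ as the distinguished bottom element of the middle column. The inclusion $H_N\subset H_N^{[\infty]}$ is immediate from $P_{even}^{[\infty]}\subset P_{even}$, while $H_N^{[\infty]}\subset O_N^*$ amounts to $P_2^*\subset P_{even}^{[\infty]}$, which is a consequence of the characterization recorded just after the definition: every subpartition of an element of $P_{even}^{[\infty]}$ lies in $P_{even}^*$, and conversely, each pairing in $P_2^*$ is itself such a subpartition. Combined with the already noted $H_N^*\subset H_N^{[\infty]}\subset O_N^+$, this places $H_N^{[\infty]}$ exactly where the diagram claims.

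The final and most substantial step is to invoke the Raum--Weber classification to justify the two kinds of arrows. On the one hand, the top row admits no intermediate easy quantum groups, by the aforementioned pair classification. On the other hand, the bottom row does contain genuine intermediate quantum groups---the higher hyperoctahedral series produced in \cite{rw1}, \cite{rw2}, \cite{rw3}, \cite{rw4}---interpolating between $H_N$ and $H_N^{[\infty]}$ and between $H_N^{[\infty]}$ and $H_N^+$. This last point is the main obstacle: one must extract from the classification the precise statement that every category strictly between $NC_{even}$ and $P_{even}$ either contains $P_{even}^{[\infty]}$ or is contained in it, so that the three columns genuinely exhaust all possibilities and all nontrivial interpolation is confined to the bottom row.
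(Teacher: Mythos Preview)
There is a genuine error in your second step: the inclusion $H_N^{[\infty]}\subset O_N^*$ is false. In terms of categories this would mean $P_2^*\subset P_{even}^{[\infty]}$, but the half-liberating permutation $\slash\hskip-2.0mm\backslash\hskip-1.7mm|=(321)\in P_2^*$ has three crossings and hence signature $-1$, so by Lemma~2.5 it does not belong to $P_{even}^{[\infty]}$. Your justification (``each pairing in $P_2^*$ is itself such a subpartition'') confuses being a subpartition of some element of $P_{even}^{[\infty]}$ with being an element of $P_{even}^{[\infty]}$. In fact $P_{even}^{[\infty]}\cap P_2=NC_2$, since the ``inflation'' description of $P_{even}^{[\infty]}$ in the proof of Lemma~2.5 does nothing to pairings; consequently your trichotomy via $D\cap P_2$ places $H_N^{[\infty]}$ in the \emph{third} column, not the middle one, and the whole column organization collapses. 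One can also see this directly from $H_N^*=H_N^{[\infty]}\cap O_N^*$ (used in the proof of Proposition~2.6) together with $H_N^*\neq H_N^{[\infty]}$.

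The paper's argument is organized by rows, not columns. The first dichotomy, coming from \cite{bc1}, \cite{bsp}, \cite{bve}, \cite{web}, is that any easy $H_N\subset G\subset O_N^+$ satisfies either $O_N\subset G$ or $G\subset H_N^+$; this separates the orthogonal top row (where the same references give $O_N^*$ as the unique intermediate) from the hyperoctahedral bottom row. The second dichotomy, inside $H_N\subset G\subset H_N^+$, is the Raum--Weber result from \cite{rw4} that if $G\not\subset H_N^{[\infty]}$ then $H_N^{[\infty]}\subset G$. Your final paragraph essentially states this second dichotomy correctly, but the first step must be the row split $O_N\subset G$ versus $G\subset H_N^+$, not a column split via $D\cap P_2$.
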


This is a key result in the classification of easy quantum groups:

(1) The first dichotomy, $O_N\subset G\subset O_N^+$ vs. $H_N\subset G\subset H_N^+$, comes from the early classification results, from \cite{bc1}, \cite{bsp}, \cite{bve}, \cite{web}. In addition, these results solve as well the first problem, $O_N\subset G\subset O_N^+$, with $G=O_N^*$ being the unique non-trivial solution.

(2) The second dictotomy, $H_N\subset G\subset H_N^{[\infty]}$ vs. $H_N^{[\infty]}\subset G\subset H_N^+$, comes from \cite{rw1}, \cite{rw2}, \cite{rw3}, \cite{rw4}, and more specifically from the final classification paper \cite{rw4}, where the quantum groups $S_N\subset G\subset H_N^+$ with $G\not\subset H_N^{[\infty]}$ were classified, and shown to contain $H_N^{[\infty]}$.

Regarding now the case $H_N^{[\infty]}\subset G\subset H_N^+$, the precise result here, from \cite{rw4}, is:

\begin{proposition}
Let $H_N^{\diamond k}\subset H_N^+$ be the easy quantum group coming from:
$$\pi_k=\ker\begin{pmatrix}1&\ldots&k&k&\ldots&1\\1&\ldots&k&k&\ldots&1\end{pmatrix}$$
Then $H_N^+=H_N^{\diamond 1}\supset H_N^{\diamond 2}\supset H_N^{\diamond 3}\supset\ldots\supset H_N^{[\infty]}$, and we obtain in this way all the intermediate easy quantum groups $H_N^{[\infty]}\subset G\subset H_N^+$, satisfying $G\neq H_N^{[\infty]}$.
\end{proposition}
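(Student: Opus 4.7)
The plan is to work through Tannakian duality, translating the proposition into a statement about categories of partitions, and then to invoke the Raum-Weber classification \cite{rw4} for the main structural claim. Let $D_k$ denote the category of partitions associated to $H_N^{\diamond k}$, so the claimed chain $H_N^+ = H_N^{\diamond 1} \supset H_N^{\diamond 2} \supset \ldots \supset H_N^{[\infty]}$ is equivalent, under the duality, to a chain $NC_{even} = D_1 \subset D_2 \subset \ldots \subset P_{even}^{[\infty]}$ of categories of partitions, and the exhaustiveness claim becomes the statement that these are the only intermediate categories strictly between $NC_{even}$ and $P_{even}^{[\infty]}$.

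The warm-up steps are combinatorial verifications at the partition level. For $k = 1$ the partition $\pi_1$ is the basic four-leg one-block partition, which together with $NC_2$ (the base category of $O_N^+$) generates $NC_{even}$, giving $H_N^{\diamond 1} = H_N^+$. For the inclusions $D_k \subset D_{k+1}$, I would show that $\pi_k$ is a derived partition of $\pi_{k+1}$ by capping the two innermost through-strings of $\pi_{k+1}$ using the basic maps $T_\cap$ and $T_\cup$; this yields $\pi_k \in \langle \pi_{k+1} \rangle$ and hence the desired inclusion. For the inclusion $D_k \subset P_{even}^{[\infty]}$, equivalently $H_N^{\diamond k} \supset H_N^{[\infty]}$, I would verify $\pi_k \in P_{even}^{[\infty]}$ using the criterion stated just after the definition of $P_{even}^{[\infty]}$: every subpartition of the nested form of $\pi_k$ is then straightforwardly checked to lie in $P_{even}^*$, by inspecting the induced counterclockwise colorings block by block.

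The real content of the proposition, and the main obstacle, lies in the strictness and the exhaustiveness of the chain. Strictness of $D_k \subsetneq D_{k+1}$ I would establish by producing for each $k$ an explicit combinatorial obstruction separating the two, either via a tensor relation among representations distinguishing $H_N^{\diamond k}$ from $H_N^{\diamond k+1}$, or via a specific partition in $D_{k+1}$ that is provably not obtainable from $\pi_k$ through tensor, composition, and conjugation. Exhaustiveness is the deep assertion that any intermediate category $D$ with $NC_{even} \subsetneq D \subsetneq P_{even}^{[\infty]}$ coincides with some $D_k$, and this is precisely the main theorem of Raum-Weber in \cite{rw4}. I would not attempt to reprove it: their argument isolates, for any such $D$, the minimal depth at which the $P_{even}^*$-uniformity condition fails on a subpartition of an element of $D$, and shows that this depth is an invariant which matches $D$ to a unique $D_k$. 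I would invoke this classification directly, combine it with the elementary chain-building of the previous paragraph, and conclude.
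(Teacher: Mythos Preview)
Your proposal is correct and matches the paper's approach: the paper does not prove this proposition but simply records it as a result from Raum--Weber \cite{rw4}, and your sketch likewise defers the exhaustiveness and strictness claims to \cite{rw4} after verifying the elementary chain inclusions. The only minor difference is that, for the verification $\pi_k\in P_{even}^{[\infty]}$, the paper (in the proof of Proposition 2.8) uses the signature criterion from Lemma 2.5 rather than the $P_{even}^*$-subpartition criterion you mention, but both routes are valid.
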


It remains to discuss the easy quantum groups $H_N\subset G\subset H_N^{[\infty]}$, with the endpoints $G=H_N,H_N^{[\infty]}$ included. We follow here \cite{rw1}, \cite{rw2}, \cite{rw3}. First, we have:

\begin{definition}
A reflection group $\Gamma=<g_1,\ldots,g_N>$ is called uniform if each permutation $\sigma\in S_N$ produces a group automorphism, $g_i\to g_{\sigma(i)}$.
\end{definition}

Given a uniform reflection group $\mathbb Z_2^{*N}\to\Gamma\to\mathbb Z_2^N$, we can associate to it a family of subsets $D(k,l)\subset P(k,l)$, which form a category of partitions, as follows:
$$D(k,l)=\left\{\pi\in P(k,l)\Big|\ker(^i_j)\leq\pi\implies g_{i_1}\ldots g_{i_k}=g_{j_1}\ldots g_{j_l}\right\}$$

Observe that we have $P_{even}^{[\infty]}\subset D\subset P_{even}$, with the inclusions coming respectively from $\eta\in D$, and from $\Gamma\to\mathbb Z_2^N$. Conversely, given a category of partitions $P_{even}^{[\infty]}\subset D\subset P_{even}$, we can associate to it a uniform reflection group $\mathbb Z_2^{*N}\to\Gamma\to\mathbb Z_2^N$, as follows:
$$\Gamma=\left\langle g_1,\ldots g_N\Big|g_{i_1}\ldots g_{i_k}=g_{j_1}\ldots g_{j_l},\forall i,j,k,l,\ker(^i_j)\in D(k,l)\right\rangle$$

As explained in \cite{rw2}, the correspondences $\Gamma\to D$ and $D\to\Gamma$ are bijective, and inverse to each other, at $N=\infty$. We have in fact the following result, from \cite{rw1}, \cite{rw2}, \cite{rw3}:

\begin{proposition}
We have correspondences between:
\begin{enumerate}
\item Uniform reflection groups $\mathbb Z_2^{*\infty}\to\Gamma\to\mathbb Z_2^\infty$.

\item Categories of partitions $P_{even}^{[\infty]}\subset D\subset P_{even}$.

\item Easy quantum groups $G=(G_N)$, with $H_N^{[\infty]}\supset G_N\supset H_N$.
\end{enumerate}
\end{proposition}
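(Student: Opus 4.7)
The plan is to assemble two independent pieces. The bijection (1)$\leftrightarrow$(2) at $N=\infty$ is the stated result of Raum--Weber \cite{rw2}, recalled just above the statement, with the two concrete maps $\Gamma\mapsto D$ and $D\mapsto\Gamma$ written out explicitly in the preceding paragraphs. The bijection (2)$\leftrightarrow$(3) is the Tannakian duality for easy quantum groups of \cite{bsp}, recorded in Definition 1.2 and the saturation discussion following it. The proof therefore amounts to checking that both equivalences match on the endpoint constraints, and then invoking these two references.

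For the Tannakian piece, a saturated category of partitions $D$ determines an easy quantum group $G=(G_N)$ via the span formula $Hom(u^{\otimes k},u^{\otimes l})=span(T_\pi|\pi\in D(k,l))$, and this functor reverses inclusions. Since $P_{even}$ is the category of $H_N$ by Proposition 1.3, and $P_{even}^{[\infty]}$ is by Definition 1.4 the category associated to $H_N^{[\infty]}$, the endpoint condition $P_{even}^{[\infty]}\subset D\subset P_{even}$ translates exactly to $H_N\subset G_N\subset H_N^{[\infty]}$. For the group--partition piece, one verifies: (a) given a uniform reflection group $\Gamma$, the set $D(\Gamma)$ is closed under the category operations, which is formal from the meaning of $\ker(^i_j)\leq\pi$, and the endpoint inclusions $P_{even}^{[\infty]}\subset D(\Gamma)\subset P_{even}$ follow respectively from $\eta\in D(\Gamma)$ (itself a direct consequence of $g_i^2=e$) and from the existing quotient $\Gamma\to\mathbb Z_2^\infty$; (b) conversely, $\Gamma(D)$ is uniform because the defining relations depend only on the kernel partition $\pi=\ker(^i_j)$, which is manifestly permutation-invariant, and the enclosures $\mathbb Z_2^{*\infty}\to\Gamma(D)\to\mathbb Z_2^\infty$ come from the two endpoint constraints on $D$.

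The main obstacle is the inversion statement, $\Gamma(D(\Gamma))=\Gamma$ and $D(\Gamma(D))=D$. This is precisely where $N=\infty$ is essential: only then can every partition be realized as the kernel of a genuinely faithful multi-index, so that the passage from a group relation to a kernel partition and back is lossless. At finite $N$ one obtains spurious identifications, and the correspondence fails in general. This inversion is exactly the content of \cite{rw2}, which I would invoke rather than reprove; the present proposition then follows by packaging it together with the Tannakian step.
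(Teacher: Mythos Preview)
Your proposal is correct and matches the paper's approach: the paper does not give a self-contained proof but simply records the two constructions $\Gamma\mapsto D$ and $D\mapsto\Gamma$ in the paragraphs preceding the statement, cites \cite{rw1}, \cite{rw2}, \cite{rw3} for their mutual inversion at $N=\infty$, and relies on the standard easy quantum group dictionary (Definition 1.2 and Proposition 1.3) for the passage (2)$\leftrightarrow$(3). Your write-up is in fact more explicit than the paper's on the endpoint checks and on why $N=\infty$ is needed, but the underlying argument is the same.
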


As an illustration, if we denote by $\mathbb Z_2^{\circ N}$ the quotient of $\mathbb Z_2^{*N}$ by the relations of type $abc=cba$ between the generators, we have the following correspondences:
$$\xymatrix@R=15mm@C=15mm{
\mathbb Z_2^N\ar@{~}[d]&\mathbb Z_2^{\circ N}\ar[l]\ar@{~}[d]&\mathbb Z_2^{*N}\ar[l]\ar@{~}[d]\\
H_N\ar[r]&H_N^*\ar[r]&H_N^{[\infty]}}$$

More generally, for any $s\in\{2,4,\ldots,\infty\}$, the quantum groups $H_N^{(s)}\subset H_N^{[s]}$ constructed in \cite{bc1} come from the quotients of $\mathbb Z_2^{\circ N}\leftarrow\mathbb Z_2^{*N}$ by the relations $(ab)^s=1$. See \cite{rw3}.

We can now formulate a final classification result, as follows:

\begin{theorem}
The easy quantum groups $H_N\subset G\subset O_N^+$ are as follows,
$$\xymatrix@R=15mm@C=10mm{
O_N\ar[rr]&&O_N^*\ar[rr]&&O_N^+\\
H_N\ar[r]\ar[u]&H_N^\Gamma\ar[r]&H_N^{[\infty]}\ar[r]&H_N^{\diamond  k}\ar[r]&H_N^+\ar[u]}$$
with the family $H_N^\Gamma$ covering $H_N,H_N^{[\infty]}$, and with the series $H_N^{\diamond k}$ covering $H_N^+$.
\end{theorem}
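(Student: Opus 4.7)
The statement is essentially a synthesis of the classification results recalled as Propositions 1.3, 1.5, 1.6, 1.8 in this section, so the plan is to assemble these pieces into one diagram and verify that the three pieces really do cover all easy quantum groups with no overlaps beyond the stated endpoints. I would organize the proof in four steps corresponding to the four horizontal arrows.

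First I would invoke the basic dichotomy recalled in the discussion after Proposition 1.5: any easy $G$ with $H_N \subset G \subset O_N^+$ either satisfies $O_N \subset G \subset O_N^+$ or $H_N \subset G \subset H_N^+$. This comes from \cite{bc1}, \cite{bsp}, \cite{bve}, \cite{web}, and in the orthogonal range the same references give $G\in\{O_N,O_N^*,O_N^+\}$, which accounts for the top row. From here on I only need to handle the bottom range $H_N \subset G \subset H_N^+$.

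Second, I would apply Proposition 1.5 to split the bottom range at $H_N^{[\infty]}$: any easy $G$ in this range lies either in $H_N\subset G\subset H_N^{[\infty]}$ or in $H_N^{[\infty]}\subset G\subset H_N^+$. For the upper half $H_N^{[\infty]}\subset G\subset H_N^+$, Proposition 1.6 identifies the intermediate quantum groups with the decreasing series $H_N^{\diamond k}$, thereby producing the rightmost stretch of the diagram, with $H_N^+=H_N^{\diamond 1}$ and $H_N^{[\infty]}=\bigcap_k H_N^{\diamond k}$ as the two endpoints.

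Third, for the remaining range $H_N\subset G\subset H_N^{[\infty]}$, I would use Proposition 1.8, which establishes a bijective correspondence between such easy quantum groups (taken as $N\to\infty$) and uniform reflection groups $\mathbb Z_2^{*\infty}\to\Gamma\to\mathbb Z_2^\infty$. Writing $H_N^\Gamma$ for the easy quantum group associated to $\Gamma$ via the category of partitions $D=D_\Gamma$, the endpoints $\Gamma=\mathbb Z_2^\infty$ and $\Gamma=\mathbb Z_2^{*\infty}$ give $H_N^\Gamma=H_N$ and $H_N^\Gamma=H_N^{[\infty]}$ respectively, so the family $H_N^\Gamma$ covers both endpoints as claimed. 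Assembling the three stretches yields precisely the diagram in the statement.

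The conceptual obstacle in this chain of reasoning is not the formal assembly but the second dichotomy, namely the fact that any easy $G$ with $S_N\subset G\subset H_N^+$ which is not contained in $H_N^{[\infty]}$ must contain $H_N^{[\infty]}$. That is the content of the Raum--Weber classification \cite{rw4}, and here it is used as a black box; without it the bottom range would a priori split into a more complicated lattice. Once this input is granted, all that is left is to check that the two families $H_N^\Gamma$ and $H_N^{\diamond k}$ really do meet cleanly at $H_N^{[\infty]}$ and do not overlap elsewhere, which follows immediately from the descriptions of their generating relations.
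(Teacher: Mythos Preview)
Your proposal is correct and follows essentially the same approach as the paper's own proof, which simply cites Propositions 1.5, 1.6 and 1.8 and refers to Raum--Weber \cite{rw4} for details. You have spelled out the assembly in considerably more detail than the paper does, but the logical structure---first dichotomy (orthogonal vs.\ hyperoctahedral range), second dichotomy at $H_N^{[\infty]}$, then Proposition 1.6 above and Proposition 1.8 below---is identical. One small remark: in your second step you write ``$H_N^{[\infty]}=\bigcap_k H_N^{\diamond k}$ as the two endpoints'', but note that Proposition 1.6 explicitly excludes $G=H_N^{[\infty]}$ from the series $H_N^{\diamond k}$; this is why the theorem says the series covers only $H_N^+$, while $H_N^{[\infty]}$ is accounted for by the family $H_N^\Gamma$ (at $\Gamma=\mathbb Z_2^{*\infty}$), exactly as you say in your third step.
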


This follows indeed from Proposition 1.5, Proposition 1.6 and Proposition 1.8 above. For further details, we refer to the paper of Raum and Weber \cite{rw4}.

\section{Twisting, intersections}

We recall from \cite{ba3} that the signature map $\varepsilon: P_{even}\to\{-1,1\}$, extending the usual signature of permutations, $\varepsilon:S_\infty\to\{-1,1\}$, is obtained by setting $\varepsilon(\pi)=(-1)^c$, where $c\in\mathbb N$ is the number of switches between neighbors required for making $\pi$ noncrossing, and which can be shown to be well-defined modulo 2. See \cite{ba3}.

We can make act permutations on tensors in a twisted way, as follows:

\begin{definition}
Associated to any partition $\pi\in P_{even}(k,l)$ is the linear map
$$\bar{T}_\pi(e_{i_1}\otimes\ldots\otimes e_{i_k})=\sum_{\tau\leq\pi}\varepsilon(\tau)\sum_{j:\ker(^i_j)=\tau}e_{j_1}\otimes\ldots\otimes e_{j_l}$$
where $\varepsilon:P_{even}\to\{-1,1\}$ is the signature map.
\end{definition}

Observe the similarity with Definition 1.1. In fact, the maps $T_\pi$ can be obtained as above, by stating that ``the untwisted signature is by definition 1, for all partitions''.

Here are a few basic examples of such maps, taken from \cite{ba3}:

\begin{proposition}
The linear maps associated to the basic crossings are:
$$\bar{T}_{\slash\!\!\!\backslash}(e_i\otimes e_j)
=\begin{cases}
-e_j\otimes e_i&{\rm for}\ i\neq j\\
e_j\otimes e_i&{\rm otherwise}
\end{cases}$$
$$\ \ \ \ \ \ \ \ \ \bar{T}_{\slash\hskip-1.6mm\backslash\hskip-1.1mm|\hskip0.5mm}(e_i\otimes e_j\otimes e_k)
=\begin{cases}
-e_k\otimes e_j\otimes e_i&{\rm for}\ i,j,k\ {\rm distinct}\\
e_k\otimes e_j\otimes e_i&{\rm otherwise}
\end{cases}$$
Also, for any noncrossing partition $\pi\in NC_{even}$ we have $\bar{T}_\pi=T_\pi$.
\end{proposition}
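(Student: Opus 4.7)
The plan is to unwind Definition~2.1 in each of the three cases, exploiting that for $\pi\in P_{even}$ the partitions $\tau\leq\pi$ appearing in the sum are exactly the coarsenings of $\pi$, and that merging blocks of even size yields blocks of even size, so every such $\tau$ automatically lies in $P_{even}$ with a well-defined signature $\varepsilon(\tau)$.

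For the basic crossing $\pi=\slash\!\!\!\backslash$, there are only two coarsenings — $\pi$ itself and the single four-block partition — and the condition $\ker(^i_j)\leq\pi$ forces $(j_1,j_2)=(j,i)$. The kernel equals $\pi$ when $i\neq j$, contributing $\varepsilon(\pi)\,e_j\otimes e_i=-e_j\otimes e_i$, and equals the (noncrossing) full block when $i=j$, contributing $+e_j\otimes e_i$. The triple crossing $\pi=\slash\hskip-1.6mm\backslash\hskip-1.1mm|\hskip0.5mm$ is handled analogously: its pairing has three blocks, giving five coarsenings in bijection with the set partitions of $\{1,2,3\}$, and the constraint again forces $(j_1,j_2,j_3)=(k,j,i)$. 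A direct signature check for each coincidence pattern among $(i,j,k)$ shows that the coefficient is $\varepsilon(\pi)=-1$ when $i,j,k$ are pairwise distinct (only $\tau=\pi$ contributes), while in every degenerate case the relevant coarser $\tau$ has signature $+1$ — either because it is itself noncrossing (as with the full block when $i=j=k$), or because its crossings combine with even parity (as with the merges of two of the three pairs).

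For the noncrossing claim, fix $\pi\in NC_{even}$. Since the difference between $\bar{T}_\pi$ and $T_\pi$ amounts to inserting $\varepsilon(\ker(^i_j))$ under the constraint $\ker\leq\pi$, the claim reduces to showing $\varepsilon(\tau)=+1$ for every coarsening $\tau\leq\pi$. The subtle point is that $\tau$ need not itself be noncrossing: merging two blocks of $\pi$ that are separated on the cycle by intervening blocks can produce crossings with those blocks. I expect this parity statement to be the main obstacle, and would prove it by induction on the number of merges, exploiting the planar/tree structure of a noncrossing partition (each block nested in a unique parent) to show that any crossings introduced by a single merge of sibling blocks always come in pairs, tied to the siblings lying between them.
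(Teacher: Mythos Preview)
Your treatment of the two explicit crossings is correct and essentially identical to the paper's: both of you enumerate the coarsenings of $\pi$ (equivalently, the possible kernels $\ker(^i_j)$), observe that the output index is forced to be the reversal, and check the signatures case by case. The paper phrases this as ``the basic crossings are odd, and their subpartitions are all even,'' which is exactly your computation.

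For the noncrossing assertion your reduction is also the paper's: one must show $\varepsilon(\tau)=1$ for every coarsening $\tau\leq\pi$ of a noncrossing $\pi\in NC_{even}$, and you are right that such $\tau$ need not itself be noncrossing. The paper does not prove this implication here; it simply quotes it from \cite{ba3}. Your proposed induction, however, has a genuine gap as stated. Restricting to merges of \emph{sibling} blocks does not reach every coarsening: merging a block $B_1$ with a block $B_2$ nested inside a different sibling of $B_1$ is a legal single merge that is not a sibling merge, and it typically produces crossings (e.g.\ from $\{1,2\},\{3,6\},\{4,5\}$ merge $\{1,2\}$ with $\{4,5\}$). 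Worse, once such a merge is performed the result is no longer noncrossing, so the ``tree structure'' you invoke is gone and the inductive step cannot be repeated. A cleaner route, and the one underlying \cite{ba3}, is to induct on the number of legs rather than on merges: a noncrossing $\pi$ always contains an \emph{interval block} $B$ of even size; deleting those consecutive legs sends $\pi$ to a smaller noncrossing $\pi'$ and $\tau$ to a coarsening $\tau'\leq\pi'$, and removing an even-length interval sitting inside a single block of $\tau$ does not change the parity of the number of switches needed to uncross $\tau$.
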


\begin{proof}
The basic crossings $\slash\!\!\!\backslash=\ker(^{ab}_{ba}),\slash\hskip-2.0mm\backslash\hskip-1.7mm|=\ker(^{abc}_{cba})$ are both odd, because they have respectively $1,3$ crossings, and their various subpartitions are as follows:
$$\ker\begin{pmatrix}a&a\\a&a\end{pmatrix},\ \ker\begin{pmatrix}a&a&b\\b&a&a\end{pmatrix},\ \ker\begin{pmatrix}a&b&a\\a&b&a\end{pmatrix},\ \ker\begin{pmatrix}b&a&a\\a&a&b\end{pmatrix},\ \ker\begin{pmatrix}a&a&a\\a&a&a\end{pmatrix}$$ 

Since all these subpartitions are even, we obtain the formulae in the statement. As for the second assertion, this comes from $\tau\leq\pi\in NC_{even}\implies\varepsilon(\tau)=1$. See \cite{ba3}.
\end{proof}

The idea now is that we can twist the easy quantum groups $H_N\subset G\subset O_N^+$, by using the linear maps in Definition 2.1. We should perhaps mention here that the twisting operation is usually dealt with by using cocycles, see e.g. \cite{bbc}. However, for our present purposes, we will rather need a ``Schur-Weyl twisting'', which is more powerful.

In order to define the twists, best to proceed as follows:

\begin{definition}
Associated to $H_N\subset G\subset O_N^+$ is its twist $H_N\subset\bar{G}\subset O_N^+$, given by
$$Hom(u^{\otimes k},u^{\otimes l})=span(\bar{T}_\pi|\pi\in D(k,l))$$
for any $k,l\in\mathbb N$, where $D\subset P$ is the category of partitions for $G$.
\end{definition}

Here we have used Woronowicz's Tannakian duality in \cite{wo2}. Indeed, as explained in \cite{ba3}, the correspondence $\pi\to\bar{T}_\pi$ is categorical, so the linear spaces in the statement form a tensor category, which produces via \cite{wo2} a compact quantum group $\bar{G}\subset O_N^+$. The fact that we have $H_N\subset\bar{G}$ comes from the equality $H_N=\bar{H}_N$, established in \cite{ba4}, and explained in Proposition 2.6 below, since by functoriality, $H_N=\bar{H}_N\subset\bar{G}$.

Here are some basic examples of such twists, coming from \cite{ba3}, \cite{bbc}: 

\begin{proposition}
$\bar{O}_N,\bar{O}_N^*\subset O_N^+$ are obtained respectively by imposing the relations
$$ab=\begin{cases}
-ba&{\rm for}\ a\neq b\ {\rm on\ the\ same\ row\ or\ column\ of\ }u\\
ba&{\rm otherwise}
\end{cases}$$
$$\hskip-20.9mm abc=\begin{cases}
-cba&{\rm for\ }r\leq2,s=3{\rm\ or\ }r=3,s\leq2\\
cba&{\rm for\ }r\leq2,s\leq 2{\rm\ or\ }r=s=3
\end{cases}$$
where $r,s\in\{1,2,3\}$ are the number of rows/columns of $u$ spanned by $a,b,c\in\{u_{ij}\}$.
\end{proposition}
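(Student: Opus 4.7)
The plan is to apply Woronowicz's Tannakian duality exactly as codified in Definition 2.3. The key structural input is that over $NC_2$ each of the larger categories of Proposition 1.3 is generated by a single additional partition: $P_2 = \langle NC_2,\slash\!\!\!\backslash\rangle$ and $P_2^* = \langle NC_2,\slash\hskip-1.6mm\backslash\hskip-1.1mm|\rangle$. By Proposition 2.2 the twisted maps $\bar T_\pi$ coincide with the ordinary $T_\pi$ on all of $NC_2$, so the Tannakian category attached to $O_N^+$ is unchanged by twisting. Consequently $\bar O_N\subset O_N^+$ (resp. $\bar O_N^*\subset O_N^+$) is presented by adjoining to the defining relations of $O_N^+$ the single extra intertwining relation $\bar T_{\slash\!\!\!\backslash}\in Hom(u^{\otimes 2},u^{\otimes 2})$ (resp. $\bar T_{\slash\hskip-1.6mm\backslash\hskip-1.1mm|}\in Hom(u^{\otimes 3},u^{\otimes 3})$). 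The content of the proposition is therefore just the explicit expansion of these two intertwining conditions in the generators $u_{ij}$.

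For $\bar O_N$, I compute $\bar T_{\slash\!\!\!\backslash}\,u^{\otimes 2}(e_k\otimes e_l)$ and $u^{\otimes 2}\,\bar T_{\slash\!\!\!\backslash}(e_k\otimes e_l)$ using the explicit formula from Proposition 2.2 and equate the coefficients of $e_a\otimes e_b$. This yields the scalar identity
$$\epsilon(a,b)\,u_{bk}u_{al} \;=\; \epsilon(k,l)\,u_{al}u_{bk},$$
for all $a,b,k,l$, where $\epsilon(i,j)=-1$ if $i\neq j$ and $+1$ if $i=j$. A four-case split on whether $a=b$ and whether $k=l$ recovers exactly the statement: the entries $u_{al}$ and $u_{bk}$ anticommute precisely when they are distinct and share either a row ($a=b$, $k\neq l$) or a column ($a\neq b$, $k=l$), and commute otherwise.

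For $\bar O_N^*$ the strategy is identical, one tensor factor higher. Writing $\bar T_{\slash\hskip-1.6mm\backslash\hskip-1.1mm|}(e_i\otimes e_j\otimes e_k) = \eta(i,j,k)\,e_k\otimes e_j\otimes e_i$ with $\eta(i,j,k)=-1$ iff $i,j,k$ are pairwise distinct, the intertwining condition becomes
$$\eta(p,q,r)\,u_{ra}u_{qb}u_{pc} \;=\; \eta(a,b,c)\,u_{pc}u_{qb}u_{ra}.$$
In the notation of the statement, with $a,b,c\in\{u_{ij}\}$, the integers $r$ and $s$ count the distinct rows and columns spanned; tracing through, $r=3$ (resp. $s=3$) corresponds precisely to the row (resp. column) triple being pairwise distinct, i.e. to $\eta=-1$. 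Comparing the two signs thus gives $abc=cba$ when $(r,s)$ lies in $\{(\leq2,\leq2),(3,3)\}$ and $abc=-cba$ when it lies in $\{(\leq2,3),(3,\leq2)\}$, exactly as stated.

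The only genuine point to verify is that imposing the single intertwiner $\bar T_{\slash\!\!\!\backslash}$ (resp. $\bar T_{\slash\hskip-1.6mm\backslash\hskip-1.1mm|}$) on top of $O_N^+$ already suffices to cut out $\bar O_N$ (resp. $\bar O_N^*$); this follows from the functoriality of $\pi\mapsto\bar T_\pi$ used to set up Definition 2.3, together with the aforementioned generation of $P_2, P_2^*$ over $NC_2$ by a single partition. The rest is pure bookkeeping, and the main practical difficulty is just keeping track of signs carefully in the triple case.
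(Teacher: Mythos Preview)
Your proposal is correct and follows essentially the same approach as the paper. The paper's proof is a one-line reduction: since $G$ is defined by $T_\pi\in End(u^{\otimes k})$ for the generating partition $\pi$, the twist $\bar G$ is defined by $\bar T_\pi\in End(u^{\otimes k})$, and one then reads off the relations from the explicit formulae in Proposition~2.2. You carry out exactly this reduction and, in addition, spell out the coefficient comparison and the resulting sign dichotomy in both cases, which the paper leaves implicit. The only cosmetic issue is the notational collision in your triple-product computation (you use $p,q,r$ for row indices and $a,b,c$ for column indices, overlapping with the statement's $r,s$ and $a,b,c$), but you flag and resolve it correctly.
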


\begin{proof}
Assuming that $G\subset O_N^+$ appears via the relations $T_\pi\in Hom(u^{\otimes k},u^{\otimes l})$, for a certain partition $\pi\in P(k,l)$, its twist $\bar{G}\subset O_N^+$ appears via the relations $\bar{T}_\pi\in Hom(u^{\otimes k},u^{\otimes l})$. Thus $\bar{O}_N,\bar{O}_N^*$ appear respectively via the relations $\bar{T}_{\slash\!\!\!\backslash}\in End(u^{\otimes 2})$, $\bar{T}_{\slash\hskip-1.6mm\backslash\hskip-1.1mm|\hskip0.5mm}\in End(u^{\otimes 3})$, and the result follows from the formulae in Proposition 2.2 above. See \cite{ba3}.
\end{proof}

We will show in what follows that $\bar{O}_N,\bar{O}_N^*$ are in fact the only possible twists. Let us first examine the case of $H_N,H_N^*,H_N^{[\infty]},H_N^+$, with some direct methods, based on signature computations that we will need as well later on, in section 4 below. We have:

\begin{lemma}
We have the following formulae
\begin{eqnarray*}
P_{even}^{[\infty]}&=&\left\{\pi\in P_{even}\Big|\varepsilon(\tau)=1,\forall\tau\leq\pi\right\}\\
P_{even}^*&=&\left\{\pi\in P_{even}\Big|\varepsilon(\tau)=1,\forall\tau\leq\pi,|\tau|=2\right\}
\end{eqnarray*}
where $|.|$ denotes the number of blocks.
\end{lemma}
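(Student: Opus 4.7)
The plan is to establish the second formula (for $P_{even}^*$) first, and then deduce the first from it, combined with the already-recalled characterization of $P_{even}^{[\infty]}$ via its subpartitions. The technical heart is a key fact about the signature on $2$-block partitions: for $\tau \in P_{even}$ with exactly two blocks $A, B$, one has $\varepsilon(\tau) = 1$ if and only if both $A$ and $B$ are balanced in the counterclockwise $\circ\bullet\circ\bullet\ldots$ coloring.

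The key fact I would prove by induction on the number of legs, with base case given by Proposition 2.2 (the $4$-leg situation, where $\varepsilon(\slash\!\!\!\backslash) = -1$ coincides with the unbalanced split $\circ\circ\mid\bullet\bullet$), and inductive step a local reduction that collapses a suitable adjacent pair of legs while preserving both the signature and the balance condition. Given this key fact, the second formula follows at once. For $(\subseteq)$: if $\pi \in P_{even}^*$, each block is balanced, so any $2$-block coarsening $\tau \leq \pi$ has its two blocks formed by merging balanced blocks of $\pi$, hence balanced, giving $\varepsilon(\tau) = 1$. For $(\supseteq)$: if some block $B_0 \in \pi$ is unbalanced, the $2$-block coarsening $\tau = \{B_0, \bigsqcup_{B \neq B_0} B\}$ has an unbalanced block, hence $\varepsilon(\tau) = -1$ by the key fact.

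For the first formula, I would invoke the already-recalled characterization ``$\pi \in P_{even}^{[\infty]}$ iff every subpartition $\sigma \subset \pi$ lies in $P_{even}^*$", together with the second formula applied to each such $\sigma$. This rewrites the subpartition condition as: every $2$-block coarsening of every subpartition of $\pi$ has signature $1$. A short combinatorial bookkeeping argument then identifies the class of $\tau$'s arising in this way with the full set $\{\tau : \tau \leq \pi\}$, with no restriction on the block count, yielding the first formula.

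The main obstacle is the key fact on $2$-block partitions. The signature of a partition with blocks of size greater than $2$ is not simply $(-1)^{\#\text{crossings}}$, so the inductive reduction must be set up to respect the definition of $\varepsilon$ via ``switches to non-crossing'' and match the balance condition step-by-step, which is the most delicate part of the argument.
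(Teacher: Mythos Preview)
Your plan for the second formula is essentially the paper's: the paper also isolates the ``key fact'' that a $2$-block partition $\tau$ has $\varepsilon(\tau)=1$ exactly when its blocks are balanced in the $\circ\bullet$ colouring, and derives the formula for $P_{even}^*$ immediately from it. The only difference is cosmetic --- the paper proves the key fact by a direct parity count (the number of switches needed is $N_\bullet-N_\circ$ modulo $2$), whereas you propose an induction collapsing adjacent same-block legs. Either works.

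For the first formula your route genuinely diverges from the paper's, and the phrase ``short combinatorial bookkeeping'' hides the actual content. Using the subpartition description of $P_{even}^{[\infty]}$ together with the second formula, you reduce the first formula to the equivalence
\[
\bigl[\varepsilon(\tau)=1\ \forall\,\tau\leq\pi\bigr]\ \Longleftrightarrow\ \bigl[\varepsilon(\rho)=1\ \forall\,\rho\leq\sigma\subset\pi,\ |\rho|=2\bigr].
\]
These two families of partitions are \emph{not} the same set (the left-hand side lives on the full leg set with arbitrarily many blocks, the right-hand side on sub-leg-sets with exactly two blocks), so no literal ``identification'' is possible. What is true, and what you need, is a multiplicativity relation: peeling off one block $B$ of $\tau$ gives $\varepsilon(\tau)=\varepsilon(B,\text{rest})\cdot\varepsilon(\tau|_{\text{rest}})$, which after iteration writes every $\varepsilon(\tau)$ as a product of $2$-block signatures on subpartitions, and conversely. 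This is exactly the switch-counting identity the paper exploits in its $\tau_1,\tau_2,\tau_3$ argument for the direction $\Rightarrow$; the paper handles $\Leftarrow$ by an entirely different route (the ``inflation from $NC_{even}$'' description of $P_{even}^{[\infty]}$ via $\mathbb Z_2^{*N}$). Your approach would give both directions uniformly once the multiplicativity is established, which is arguably cleaner --- but you must state and prove that relation, not call it bookkeeping.
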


\begin{proof}
We first prove the second equality. Given $\pi\in P_{even}$, we have $\tau\leq\pi,|\tau|=2$ precisely when $\tau=\pi^\beta$ is the partition obtained from $\pi$ by merging all the legs of a certain subpartition $\beta\subset\pi$, and by merging as well all the other blocks. Now observe that $\pi^\beta$ does not depend on $\pi$, but only on $\beta$, and that the number of switches required for making $\pi^\beta$ noncrossing is $c=N_\bullet-N_\circ$ modulo 2, where $N_\bullet/N_\circ$ is the number of black/white legs of $\beta$, when labelling the legs of $\pi$ counterclockwise $\circ\bullet\circ\bullet\ldots$ Thus $\varepsilon(\pi^\beta)=1$ holds precisely when $\beta\in\pi$ has the same number of black and white legs, and this gives the result.

We prove now the first equality. We recall from section 1 that we have:
$$P_{even}^{[\infty]}(k,l)=\left\{\ker\begin{pmatrix}i_1&\ldots&i_k\\ j_1&\ldots&j_l\end{pmatrix}\Big|g_{i_1}\ldots g_{i_k}=g_{j_1}\ldots g_{j_l}\ {\rm inside}\ \mathbb Z_2^{*N}\right\}$$

In other words, the partitions in $P_{even}^{[\infty]}$ are those describing the relations between free variables, subject to the conditions $g_i^2=1$. We conclude that $P_{even}^{[\infty]}$ appears from $NC_{even}$ by ``inflating blocks'', in the sense that each $\pi\in P_{even}^{[\infty]}$ can be transformed into a partition $\pi'\in NC_{even}$ by deleting pairs of consecutive legs, belonging to the same block. 

Now since this inflation operation leaves invariant modulo 2 the number $c\in\mathbb N$ of switches in the definition of the signature, it leaves invariant the signature $\varepsilon=(-1)^c$ itself, and we obtain in this way the inclusion ``$\subset$'' in the statement. 

Conversely, given $\pi\in P_{even}$ satisfying $\varepsilon(\tau)=1$, $\forall\tau\leq\pi$, our claim is that:
$$\rho\leq\sigma\subset\pi,|\rho|=2\implies\varepsilon(\rho)=1$$

Indeed, let us denote by $\alpha,\beta$ the two blocks of $\rho$, and by $\gamma$ the remaining blocks of $\pi$, merged altogether. We know that the partitions $\tau_1=(\alpha\wedge\gamma,\beta)$, $\tau_2=(\beta\wedge\gamma,\alpha)$, $\tau_3=(\alpha,\beta,\gamma)$ are all even. On the other hand, putting these partitions in noncrossing form requires respectively $s+t,s'+t,s+s'+t$ switches, where $t$ is the number of switches needed for putting $\rho=(\alpha,\beta)$ in noncrossing form. Thus $t$ is even, and we are done.

With the above claim in hand, we conclude, by using the second equality in the statement, that we have $\sigma\in P_{even}^*$. Thus we have $\pi\in P_{even}^{[\infty]}$, which ends the proof of ``$\supset$''.
\end{proof}

With the above lemma in hand, we can now prove:

\begin{proposition}
The basic quantum groups $H_N\subset G\subset H_N^+$, namely
$$H_N\subset H_N^*\subset H_N^{[\infty]}\subset H_N^+$$
are equal to their own twists.
\end{proposition}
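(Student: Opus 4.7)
\medskip

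\noindent\textbf{Plan.} By Tannakian duality, proving $G=\bar{G}$ reduces to the equality of spans $\mathrm{span}(T_\pi\mid\pi\in D)=\mathrm{span}(\bar{T}_\pi\mid\pi\in D)$, where $D$ is the category of partitions of $G$. Two of the four cases are immediate from what is on the page. For $G=H_N^+$, with $D=NC_{even}$, Proposition 2.2 gives $\bar{T}_\pi=T_\pi$ for every noncrossing $\pi$. For $G=H_N^{[\infty]}$, with $D=P_{even}^{[\infty]}$, the first characterization in Lemma 2.5 forces $\varepsilon(\tau)=1$ for all $\tau\leq\pi$; expanding $\bar{T}_\pi$ via Definition 2.1, every sign is $+1$, so $\bar{T}_\pi=T_\pi$ again.

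The remaining cases $G=H_N$ and $G=H_N^*$ need an indirect argument, since $\bar{T}_\pi\neq T_\pi$ in general (already at $\pi=\slash\!\!\!\backslash$). The plan is to use the intersection descriptions $H_N=O_N\cap H_N^{[\infty]}$ and $H_N^*=O_N^*\cap H_N^{[\infty]}$, which at the partition-category level correspond to joining $P_2$, resp.\ $P_2^*$, with $P_{even}^{[\infty]}$. By functoriality of the twist together with the already-proven $\bar{H}_N^{[\infty]}=H_N^{[\infty]}$, we then get $\bar{H}_N=\bar{O}_N\cap H_N^{[\infty]}$ and $\bar{H}_N^*=\bar{O}_N^*\cap H_N^{[\infty]}$, and the problem reduces to checking, at the algebra level, that these twisted intersections coincide with their untwisted counterparts.

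For $H_N$: given two distinct coordinates $a,b\in\{u_{ij}\}$ on the same row or column of $u$, Proposition 2.4 gives $ab=-ba$ while the $H_N^{[\infty]}$-relation gives $aab=0$. Combining, $(ab)^2=a(ba)b=-a(ab)b=-(aab)b=0$, and since $(ab)^*=ba=-ab$, the C*-identity yields $\|ab\|^2=\|(ab)(ab)^*\|=\|{-}(ab)^2\|=0$, hence $ab=0$. These are exactly the extra vanishings cutting $H_N^{[\infty]}$ down to $H_N$, giving $\bar{H}_N\subset H_N$; the reverse inclusion $H_N\subset\bar{H}_N$ is automatic, since in the commutative $C(H_N)$ the products $ab$ in question already vanish, making every $\bar{O}_N$-relation of Proposition 2.4 trivially true. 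The $H_N^*$ case runs on the same scheme: the signed half-commutation $abc=\pm cba$ of $\bar{O}_N^*$ combined with the vanishings $abc=0$ of $H_N^{[\infty]}$ should collapse to the plain half-commutation $abc=cba$ of $H_N^*$, and conversely every $\bar{O}_N^*$-relation is trivially satisfied in $C(H_N^*)$ for the surviving triples.

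The main obstacle I expect is precisely this last step: one must match the $(r,s)$-bookkeeping of Proposition 2.4 for $\bar{O}_N^*$ with the row/column vanishings of $H_N^{[\infty]}$, verifying case by case that every triple bearing a minus sign in the defining relations of $\bar{O}_N^*$ is annihilated by an $H_N^{[\infty]}$-vanishing, so that the signed relation reduces to the unsigned one inside $\bar{O}_N^*\cap H_N^{[\infty]}$ and not to something larger.
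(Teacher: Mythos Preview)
Your overall strategy matches the paper's almost exactly: treat $H_N^+$ and $H_N^{[\infty]}$ directly via $\bar T_\pi=T_\pi$ (Proposition~2.2 and Lemma~2.5), then write the remaining two as intersections with $H_N^{[\infty]}$ and use functoriality of the twist together with $\bar H_N^{[\infty]}=H_N^{[\infty]}$. The paper does $H_N^*$ precisely as you outline, checking that every $abc=-cba$ relation of $\bar O_N^*$ becomes $0=0$ inside $H_N^{[\infty]}$, so that only the unsigned half-commutations survive; and for $H_N$ it uses $H_N=H_N^*\cap O_N$ (hence $\bar H_N=H_N^*\cap\bar O_N$) and quotes \cite{ba4} for that last intersection, whereas you go directly through $H_N=O_N\cap H_N^{[\infty]}$.

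There is, however, a genuine slip in your $H_N$ argument. The vanishing $ab=0$ for distinct $a,b$ on the same row or column is \emph{not} an ``extra vanishing cutting $H_N^{[\infty]}$ down to $H_N$'': it already holds in $H_N^+\supset H_N^{[\infty]}$, so your C*-computation re-proves something you already have and by itself does not give $\bar H_N\subset H_N$. What actually cuts $H_N^{[\infty]}$ down to $H_N$ is full commutativity $ab=ba$ for \emph{all} $a,b\in\{u_{ij}\}$. The fix is immediate once you use the ``otherwise'' clause of Proposition~2.4: inside $\bar O_N\cap H_N^{[\infty]}$, pairs $a,b$ on different rows and different columns satisfy $ab=ba$ by the $\bar O_N$-relations, while distinct pairs on the same row or column satisfy $ab=0=ba$ by the $H_N^+$-relations; hence all coordinates commute and you land in $O_N\cap H_N^{[\infty]}=H_N$. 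The reverse inclusion $H_N\subset\bar O_N\cap H_N^{[\infty]}$ is as you say. So the route is fine, but the step you wrote needs to be replaced by this commutativity check rather than the redundant $ab=0$ derivation.
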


\begin{proof}
We know from section 1 that the corresponding categories of partitions are:
$$P_{even}\supset P_{even}^*\supset P_{even}^{[\infty]}\supset NC_{even}$$

With this observation in hand, the proof goes as follows:

(1) $H_N^+$. We know from Proposition 2.2 for $\pi\in NC_{even}$ we have $\bar{T}_\pi=T_\pi$, and since we are in the situation $D\subset NC_{even}$, the definitions of $G,\bar{G}$ coincide.

(2) $H_N^{[\infty]}$. Here we can use the same argument as in (1), based this time on the description of $P_{even}^{[\infty]}$ found in Lemma 2.5 above.

(3) $H_N^*$. We have $H_N^*=H_N^{[\infty]}\cap O_N^*$, so $\bar{H}_N^*\subset H_N^{[\infty]}$ is the subgroup obtained via the defining relations for $\bar{O}_N^*$. But all the $abc=-cba$ relations defining $\bar{H}_N^*$ are automatic, of type $0=0$, and it follows that $\bar{H}_N^*\subset H_N^{[\infty]}$ is the subgroup obtained via the relations $abc=cba$, for any $a,b,c\in\{u_{ij}\}$. Thus we have $\bar{H}_N^*=H_N^{[\infty]}\cap O_N^*=H_N^*$, as claimed.

(4) $H_N$. We have $H_N=H_N^*\cap O_N$, and by functoriality, $\bar{H}_N=\bar{H}_N^*\cap\bar{O}_N=H_N^*\cap\bar{O}_N$. But this latter intersection was shown in \cite{ba4} to be equal to $H_N$, as claimed.
\end{proof}

In order to investigate now the general case, we need to establish the precise relation between the maps $T_\pi,\bar{T}_\pi$. By using the formulae in Proposition 2.2, we obtain:
$$\bar{T}_{\slash\!\!\!\backslash}=-T_{\slash\!\!\!\backslash}+2T_{\ker(^{aa}_{aa})}$$
$$\bar{T}_{\slash\hskip-1.6mm\backslash\hskip-1.1mm|\hskip0.5mm}=-\bar{T}_{\slash\hskip-1.6mm\backslash\hskip-1.1mm|\hskip0.5mm}+2T_{\ker(^{aab}_{baa})}+2T_{\ker(^{aba}_{aba})}+2T_{\ker(^{baa}_{aab})}-4T_{\ker(^{aaa}_{aaa})}$$

In general, the answer comes from the M\"obius inversion formula. We recall that the M\"obius function of any lattice, and in particular of $P_{even}$, is given by:
$$\mu(\sigma,\pi)=\begin{cases}
1&{\rm if}\ \sigma=\pi\\
-\sum_{\sigma\leq\tau<\pi}\mu(\sigma,\tau)&{\rm if}\ \sigma<\pi\\
0&{\rm if}\ \sigma\not\leq\pi
\end{cases}$$

With this notation, we have the following result:

\begin{lemma}
For any partition $\pi\in P_{even}$ we have the formula
$$\bar{T}_\pi=\sum_{\tau\leq\pi}\alpha_\tau T_\tau$$
where $\alpha_\sigma=\sum_{\sigma\leq\tau\leq\pi}\varepsilon(\tau)\mu(\sigma,\tau)$, with $\mu$ being the M\"obius function of $P_{even}$.
\end{lemma}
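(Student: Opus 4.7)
The proof is essentially a Möbius inversion argument once we introduce the right intermediate objects. My plan is to isolate, for each partition $\tau$, the piece of the tensor sum corresponding to kernels \emph{exactly} equal to $\tau$, and then relate the two definitions of $T_\pi$ and $\bar T_\pi$ through these pieces.

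First I would introduce, for each $\tau \in P_{even}(k,l)$, the auxiliary linear map
$$S_\tau(e_{i_1}\otimes\ldots\otimes e_{i_k}) = \sum_{j:\ker(^i_j)=\tau} e_{j_1}\otimes\ldots\otimes e_{j_l}.$$
Comparing with Definition 1.1 and Definition 2.1, the defining formulas can then be rewritten as
$$T_\pi = \sum_{\tau\leq\pi} S_\tau, \qquad \bar T_\pi = \sum_{\tau\leq\pi}\varepsilon(\tau)\,S_\tau.$$
These are identities of linear maps, valid on basis vectors by construction.

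Next I would apply the standard Möbius inversion formula on the lattice $P_{even}$. Since the relation $T_\pi = \sum_{\tau\leq\pi} S_\tau$ holds for every $\pi$, Möbius inversion yields
$$S_\tau = \sum_{\sigma\leq\tau} \mu(\sigma,\tau)\, T_\sigma$$
for every $\tau\in P_{even}$, with $\mu$ the Möbius function recalled before the lemma. Substituting this into the expression for $\bar T_\pi$ gives
$$\bar T_\pi = \sum_{\tau\leq\pi}\varepsilon(\tau) \sum_{\sigma\leq\tau}\mu(\sigma,\tau)\,T_\sigma,$$
and interchanging the order of summation (the double sum runs over pairs $\sigma\leq\tau\leq\pi$, which is a finite set) yields
$$\bar T_\pi = \sum_{\sigma\leq\pi}\left(\sum_{\sigma\leq\tau\leq\pi}\varepsilon(\tau)\mu(\sigma,\tau)\right) T_\sigma,$$
which is exactly the asserted formula with the stated coefficients $\alpha_\sigma$.

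There is no real obstacle here: the only thing one has to be careful about is the bookkeeping of the order of $\sigma,\tau$ in $\mu(\sigma,\tau)$, matching the convention used in the displayed recursion for $\mu$ in the excerpt. If one wishes, one can sanity-check the formula against the two explicit cases given just before the lemma (the crossings $\slash\!\!\!\backslash$ and $\slash\hskip-1.6mm\backslash\hskip-1.1mm|$), where the coefficients $\pm 1,\pm 2,\pm 4$ should appear as the values of $\alpha_\sigma$ on the corresponding subpartitions.
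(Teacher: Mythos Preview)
Your proof is correct and is essentially the same M\"obius inversion argument as the paper's: the paper also decomposes $T_\tau$ over the sets $\{j:\ker(^i_j)=\sigma\}$ (your $S_\sigma$), then arrives at the condition $\varepsilon(\sigma)=\sum_{\sigma\leq\tau\leq\pi}\alpha_\tau$ and inverts. The only cosmetic difference is that you name the pieces $S_\tau$ and invert $T_\pi=\sum_{\tau\leq\pi}S_\tau$ first, deriving the coefficients forward, whereas the paper posits $T=\sum\alpha_\tau T_\tau$ and checks it equals $\bar T_\pi$; the content is identical.
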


\begin{proof}
The linear combinations $T=\sum_{\tau\leq\pi}\alpha_\tau T_\tau$ acts on tensors as follows:
\begin{eqnarray*}
T(e_{i_1}\otimes\ldots\otimes e_{i_k})
&=&\sum_{\tau\leq\pi}\alpha_\tau T_\tau(e_{i_1}\otimes\ldots\otimes e_{i_k})\\
&=&\sum_{\tau\leq\pi}\alpha_\tau\sum_{\sigma\leq\tau}\sum_{j:\ker(^i_j)=\sigma}e_{j_1}\otimes\ldots\otimes e_{j_l}\\
&=&\sum_{\sigma\leq\pi}\left(\sum_{\sigma\leq\tau\leq\pi}\alpha_\tau\right)\sum_{j:\ker(^i_j)=\sigma}e_{j_1}\otimes\ldots\otimes e_{j_l}
\end{eqnarray*}

Thus, in order to have $\bar{T}_\pi=\sum_{\tau\leq\pi}\alpha_\tau T_\tau$, we must have, for any $\sigma\leq\pi$:
$$\varepsilon(\sigma)=\sum_{\sigma\leq\tau\leq\pi}\alpha_\tau$$

But this problem can be solved by using the M\"obius inversion formula, and we obtain the numbers $\alpha_\sigma=\sum_{\sigma\leq\tau\leq\pi}\varepsilon(\tau)\mu(\sigma,\tau)$ in the statement.
\end{proof}

Now back to the general twisting problem, the answer here is:

\begin{proposition}
The twists of the easy quantum groups $H_N\subset G\subset O_N^+$ are:
\begin{enumerate}
\item For $G=O_N,O_N^*$ we obtain $\bar{G}=\bar{O}_N,\bar{O}_N^*$.

\item For $G\neq O_N,O_N^*$ we have $G=\bar{G}$.
\end{enumerate}
\end{proposition}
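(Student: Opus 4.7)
The plan is to use the classification Theorem 1.9 to enumerate cases. Part (1) is essentially a restatement of Proposition 2.4: by Definition 2.3, the categories $P_2$ and $P_2^*$ for $O_N$ and $O_N^*$ are generated by $\slash\!\!\!\backslash$ and $\slash\hskip-1.6mm\backslash\hskip-1.1mm|\hskip0.5mm$, so the twists $\bar{O}_N,\bar{O}_N^*$ are carved out inside $O_N^+$ by the relations $\bar{T}_{\slash\!\!\!\backslash}\in\mathrm{End}(u^{\otimes 2})$ and $\bar{T}_{\slash\hskip-1.6mm\backslash\hskip-1.1mm|\hskip0.5mm}\in\mathrm{End}(u^{\otimes 3})$, which Proposition 2.4 identifies with the stated relations.

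For (2), by Theorem 1.9 the easy quantum groups other than $O_N,O_N^*$ split into three families: $O_N^+$, the family $H_N\subset H_N^\Gamma\subset H_N^{[\infty]}$, and the series $H_N^{[\infty]}\subset H_N^{\diamond k}\subset H_N^+$. For $O_N^+, H_N^+, H_N^{\diamond k}$ and $H_N^{[\infty]}$, the category $D$ is contained in $P_{even}^{[\infty]}$, so by Lemma 2.5 every $\pi\in D$ satisfies $\varepsilon(\tau)=1$ for all $\tau\leq\pi$. Substituted into Lemma 2.7, the M\"obius coefficients collapse,
$$\alpha_\sigma=\sum_{\sigma\leq\tau\leq\pi}\mu(\sigma,\tau)=\delta_{\sigma,\pi},$$
giving $\bar{T}_\pi=T_\pi$, and hence $\bar{G}=G$ by Definition 2.3.

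The remaining work concerns the intermediate range $H_N\subsetneq H_N^\Gamma\subsetneq H_N^{[\infty]}$, since the endpoint $H_N$ is Proposition 2.6(4) and $H_N^{[\infty]}$ has just been settled. By the previous step $\bar{H}_N^\Gamma\subset\bar{H}_N^{[\infty]}=H_N^{[\infty]}$, so it suffices to check that $H_N^\Gamma$ and $\bar{H}_N^\Gamma$ are cut out inside $H_N^{[\infty]}$ by the same relations on the generators $u_{ij}$. For each generator $\pi\in D\setminus P_{even}^{[\infty]}$, coming from a word relation $g_{i_1}\cdots g_{i_k}=g_{j_1}\cdots g_{j_l}$ of $\Gamma$, I would expand $\bar{T}_\pi$ via Lemma 2.7 and argue that the correction terms $\alpha_\tau T_\tau$ with $\tau<\pi$ either vanish as operators on $u^{\otimes \bullet}$ once one is inside $H_N^{[\infty]}$ (using the cubic annihilation relations $abc=0$ for $a\neq c$ on the same row or column of $u$), or express $T_\tau$ in terms of maps already forced by the category $D$ of $H_N^\Gamma$. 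This extends the mechanism from Proposition 2.6(3), where the sign-twist in the $\bar{O}_N^*$ relation $abc=-cba$ becomes $0=0$ inside $H_N^{[\infty]}$; the additional input is the uniformity of $\Gamma$ from Definition 1.7, which makes all indexed specializations compatible with $D$.

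The main obstacle lies in this last step: rigorously showing that every nonzero correction $\alpha_\tau T_\tau$ in $\bar{T}_\pi-T_\pi$ collapses once one works modulo the $H_N^{[\infty]}$ relations. Unlike the previous case, where the signature is identically $+1$ on subpartitions and one obtains a term-by-term equality $\bar{T}_\pi=T_\pi$, here the signatures of refinements $\tau\leq\pi$ genuinely fluctuate, so the cancellations must be engineered at the level of the full Tannakian span $\{T_\pi:\pi\in D\}$ and not partition by partition. A careful induction on the number of crossings or blocks of $\pi$, combined with a parity bookkeeping for the signature on subpartitions along the lines of Lemma 2.5, seems the most promising route.
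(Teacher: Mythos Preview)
Your treatment of part (1) and of the cases $O_N^+$, $H_N^+$, $H_N^{\diamond k}$, $H_N^{[\infty]}$ is fine and essentially matches the paper: since $H_N^{[\infty]}\subset H_N^{\diamond k}\subset H_N^+$, the corresponding categories sit inside $P_{even}^{[\infty]}$, and Lemma~2.5 together with the M\"obius formula of Lemma~2.7 gives $\bar{T}_\pi=T_\pi$ for every $\pi\in D$. (The paper argues the $H_N^{\diamond k}$ case by checking directly that $\pi_k$ and all its subpartitions are even, which amounts to the same thing.)

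The genuine gap is in your handling of the family $H_N^\Gamma$. You propose to expand $\bar{T}_\pi-T_\pi$ and kill the correction terms using the cubic relations $abc=0$ that cut out $H_N^{[\infty]}$, and you correctly flag this as the hard part. But this route is both unnecessary and unlikely to succeed as stated: the signs $\varepsilon(\tau)$ for $\tau\leq\pi$ really do fluctuate, and there is no reason the individual $\alpha_\tau T_\tau$ should vanish modulo the $H_N^{[\infty]}$ relations. The paper bypasses all of this with a single structural observation. Because $P_{even}^{[\infty]}\subset D$, the category $D$ contains, for every $r$, the one-block partition with $r$ upper legs and $r$ lower legs. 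Capping with these partitions lets you merge any two blocks of any $\pi\in D$ while staying in $D$; iterating gives
\[
\tau\leq\pi\in D\ \Longrightarrow\ \tau\in D.
\]
Once you know this, Lemma~2.7 says $\bar{T}_\pi=\sum_{\tau\leq\pi}\alpha_\tau T_\tau$ is a linear combination of maps $T_\tau$ with $\tau\in D$, hence lies in $\mathrm{Hom}(u^{\otimes k},u^{\otimes l})$ for $G=H_N^\Gamma$. This gives $G\subset\bar{G}$; the reverse inclusion follows by the same argument with the roles of $T_\pi$ and $\bar{T}_\pi$ exchanged (the system in Lemma~2.7 is triangular, hence invertible). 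No parity bookkeeping or induction on crossings is needed: the closure of $D$ under coarsening is exactly the missing idea in your sketch, and it is precisely your ``second option'' (express $T_\tau$ via maps already in $D$) made trivially true.
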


\begin{proof}
We use the classification result in Theorem 1.9 above. We have to examine the 3 cases left, namely $G=O_N^+,H_N^{\diamond k},H_N^\Gamma$, and the proof goes as follows:

(1) Let $G=O_N^+$. We know from Proposition 2.2 for $\pi\in NC_{even}$ we have $\bar{T}_\pi=T_\pi$, and since we are in the situation $D\subset NC_{even}$, the definitions of $G,\bar{G}$ coincide.

(2) Let $G=H_N^{\diamond k}$. We know from Proposition 1.6 that the generating partition is:
$$\pi_k=\ker\begin{pmatrix}1&\ldots&k&k&\ldots&1\\1&\ldots&k&k&\ldots&1\end{pmatrix}$$

By symmetry, putting this partition in noncrossing form requires the same number of upper switches and lower switches, and so requires an even number of total switches. Thus $\pi_k$ is even, and the same argument shows in fact that all its subpartitions are even as well. It follows that we have $T_{\pi_k}=\bar{T}_{\pi_k}$, and this gives the result.

(3) Let $G=H_N^\Gamma$. We denote by $P_{even}^{[\infty]}\subset D\subset P_{even}$ the corresponding category of partitions. According to the description of $P_{even}^{[\infty]}$ worked out in \cite{bc1}, and mentioned after Definition 1.6 above, this category contains the following type of partition:
$$\xymatrix@R=5mm@C=5mm{
\circ\ar@{-}[dd]&\circ\ar@{.}[dd]&\ldots&\circ\ar@{.}[dd]&\circ\ar@{-}[dd]\\
\ar@{-}[rrrr]&&&&\\
\circ&\circ&\ldots&\circ&\circ}$$

The point now is that, by ``capping'' with such partitions, we can merge any pair of blocks of $\pi\in D$, by staying inside $D$. Thus, $D$ has the following property:
$$\tau\leq\pi\in D\implies\tau\in D$$

We deduce from this and from Lemma 2.7 that $\bar{T}_\pi$ is an intertwiner for $G$, and so $G\subset\bar{G}$. By symmetry we must have $\bar{G}\subset G$ as well, and this finishes the proof.
\end{proof}

As explained in \cite{ba3}, \cite{ba4}, the theory of ``easy noncommutative spheres'', first developed in \cite{bgo}, can be extended by twisting, and then by taking intersections between twisted and untwisted objects. We can proceed similarly with the quantum groups themselves:

\begin{theorem}
The easy quantum groups $H_N\subset G\subset O_N^+$ and their twists are
$$\xymatrix@R=7mm@C=20mm{
&O_N\ar[r]&O_N^*\ar[rd]\\
H_N\ar[r]\ar[ur]\ar[rd]&H_N^\Gamma\ar[r]&H_N^{\diamond  k}\ar[r]&O_N^+\\
&\bar{O}_N\ar[r]&\bar{O}_N^*\ar[ru]}$$
and the set formed by these quantum groups is stable by intersections.
\end{theorem}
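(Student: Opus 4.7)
The first statement, namely the description of all the quantum groups in the diagram, is immediate from what has been established: the classification Theorem 1.9 gives the untwisted row, while Proposition 2.8 shows that among the easy $H_N\subset G\subset O_N^+$, the only ones producing new objects upon twisting are $O_N$ and $O_N^*$, yielding $\bar{O}_N$ and $\bar{O}_N^*$. Thus the diagram is obtained simply by appending these two twisted objects to the classification picture.

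The real content is stability under intersections, and the plan is to split into three cases. \emph{Case (a), intersections within the untwisted part:} since the intersection of two easy quantum groups corresponds to the category of partitions generated by the union of their categories, this is again easy, and Theorem 1.9 places it back in the diagram. \emph{Case (b), intersections among the twisted objects:} we have $\bar{O}_N\subset\bar{O}_N^*$, so $\bar{O}_N\cap\bar{O}_N^*=\bar{O}_N$. \emph{Case (c), mixed intersections $G\cap\bar{O}_N$ and $G\cap\bar{O}_N^*$:} this is the substantial case, and the plan is to use the defining relations from Propositions 1.5, 1.6, 1.8 and Proposition 2.4.

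The workhorse for case (c) is the fact, essentially established in \cite{ba4} and already invoked in the proof of Proposition 2.6, that $H_N^*\cap\bar{O}_N=H_N$. I would extend this to the observation $H_N^+\cap\bar{O}_N=H_N$ by arguing directly on relations: in $H_N^+$ distinct entries on the same row or column already commute, so combining this with $ab=-ba$ on rows and columns of $\bar{O}_N$ forces $ab=0$, i.e. the hyperoctahedral relations. Consequently, for every quantum group $G$ in the middle row of the diagram one has $H_N=H_N\cap\bar{O}_N\subset G\cap\bar{O}_N\subset H_N^+\cap\bar{O}_N=H_N$, hence equality. The same strategy handles $G\cap\bar{O}_N^*$: one verifies $H_N^+\cap\bar{O}_N^*=H_N^*$ (the antisymmetric $abc=-cba$ configurations collapse to $abc=0$, which are automatic in $H_N^+$, while the symmetric ones give exactly the half-commutation relations). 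For the top-row intersections, $O_N\cap\bar{O}_N=H_N$ and $O_N^*\cap\bar{O}_N^*=H_N^*$ follow by the same ``commutativity plus anticommutativity implies vanishing'' mechanism, and $O_N\cap\bar{O}_N^*$, $O_N^*\cap\bar{O}_N$ reduce to $H_N$ by combining both principles. Finally $O_N^+\cap\bar{O}_N=\bar{O}_N$ and $O_N^+\cap\bar{O}_N^*=\bar{O}_N^*$ are trivial.

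The main obstacle I anticipate is the bookkeeping in case (c): each of the intermediate relations (the defining relations for $\bar{O}_N^*$ involve the case split on the number $r,s$ of rows and columns spanned, the relations for $H_N^{[\infty]}$ are of the form $abc=0$ on rows/columns, and the $H_N^{\diamond k}$ and $H_N^\Gamma$ relations come from specific partitions) must be checked to interact correctly with the twisted relations. The cleanest execution is to reduce every mixed intersection either to $H_N^+\cap\bar{O}_N=H_N$ or to $H_N^+\cap\bar{O}_N^*=H_N^*$ via the sandwich $H_N\subset G\subset H_N^+$ or $H_N^*\subset G\subset O_N^+$, and to handle the remaining top-row cases directly from the defining relations; once these two ``master'' identities are in place, everything else is a short chase.
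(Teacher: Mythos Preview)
Your overall architecture is exactly the paper's: reduce everything to the two ``master'' identities $H_N^+\cap\bar{O}_N=H_N$ and $H_N^+\cap\bar{O}_N^*=H_N^*$, then sandwich. The paper simply imports these identities (together with the remaining top-row ones such as $O_N^*\cap\bar{O}_N^*=H_N^*$) from the intersection diagram established in \cite{ba4}, rather than re-deriving them.

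Your direct derivation of $H_N^+\cap\bar{O}_N=H_N$ is fine, though note that $ab=0$ on rows and columns is already a defining relation of $H_N^+$; the $\bar{O}_N$ input you actually use is the \emph{commutation} $ab=ba$ for $a,b$ on distinct rows and distinct columns. However, your sketch for $H_N^+\cap\bar{O}_N^*=H_N^*$ has a genuine gap. The claim that ``the antisymmetric $abc=-cba$ configurations collapse to $abc=0$, which are automatic in $H_N^+$'' fails for the configuration $a=u_{i\alpha}$, $c=u_{i\gamma}$ on the same row with $\alpha\neq\gamma$, and $b=u_{j\beta}$ with $j\neq i$ and $\beta\notin\{\alpha,\gamma\}$: here $r=2$, $s=3$, so $\bar{O}_N^*$ gives $abc=-cba$, but neither $ab$ nor $bc$ vanishes in $H_N^+$, so $abc$ is not forced to zero by the $H_N^+$ relations alone. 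The same issue undermines your one-line argument for $O_N^*\cap\bar{O}_N^*=H_N^*$: from $abc=cba=-cba$ you obtain $abc=0$ only in the mixed $(r,s)$ configurations, and extracting the degree-two relations $u_{ia}u_{ib}=0$ from these degree-three vanishings is not immediate. These identities do hold, but their proofs in \cite{ba4} require more manipulation than you indicate; the paper sidesteps this by citing them.

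One further point: once you have $G\cap\bar{O}_N^*\subset H_N^*$ for $G$ in the middle row, the sandwich only gives $H_N\subset G\cap\bar{O}_N^*\subset H_N^*$, which does not yet place the intersection in the diagram. You should observe that since $H_N^*\subset\bar{O}_N^*$ (Proposition 2.6), one has $G\cap\bar{O}_N^*=G\cap H_N^*$, an intersection of easy quantum groups, hence easy, so Theorem 1.9 applies. The paper's proof is equally terse on this last step.
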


\begin{proof}
According to Proposition 2.8 the easy quantum groups $H_N\subset G\subset O_N^+$ and their twists are the quantum groups in Theorem 1.9 and the twists $\bar{O}_N,\bar{O}_N^*$ from Proposition 2.4. But these are exactly the quantum groups in the above diagram.

Regarding now the intersection assertion, we can use here some computations from \cite{ba4}. We recall from there that we have the following intersection diagram:
$$\xymatrix@R=7mm@C=20mm{
&O_N\ar[r]&O_N^*\ar[rd]\\
H_N\ar[r]\ar[ur]\ar[rd]&H_N^*\ar[r]\ar[ur]\ar[dr]&H_N^+\ar[r]&O_N^+\\
&\bar{O}_N\ar[r]&\bar{O}_N^*\ar[ru]}$$

More precisely, this diagram has the property that any intersection $G\cap H$ appears on the diagram, as the biggest quantum group contained in both $G,H$. See \cite{ba4}.

With this diagram in hand, the assertion follows. Indeed, the intersections between the quantum groups $O_N^\times$ are their twists are all on this diagram, and hence on the diagram in the statement as well. Regarding now the intersections of an easy quantum group $H_N\subset G\subset H_N^+$ with the twists $\bar{O}_N,\bar{O}_N^*$, we can use again the above diagram. Indeed, from $H_N^+\cap\bar{O}_N^*=H_N^*$ we deduce that both $K=G\cap\bar{O}_N,K'=G\cap\bar{O}_N^*$ appear as intermediate easy quantum groups $H_N\subset K^\times\subset H_N^*$, and we are done.
\end{proof}

\section{Noncommutative cubes}

In this section and in the next one we introduce our main objects of study, the noncommutative cubes and spheres. These are some special algebraic submanifolds of the free sphere $S^{N-1}_{\mathbb R,+}$, constructed in \cite{bgo}. We will first introduce $S^{N-1}_{\mathbb R,+}$ and a number of related spheres, from \cite{ba3}, \cite{ba4}, \cite{bgo}, and then we will discuss the noncommutative cubes. The noncommutative spheres will be further discussed in the next section.

Our starting point is the following definition, going back to \cite{bgo}:

\begin{definition}
The free real sphere $S^{N-1}_{\mathbb R,+}$ is defined by the following formula:
$$C(S^{N-1}_{\mathbb R,+})=C^*\left(x_1,\ldots,x_N\Big|x_i=x_i^*,x_1^2+\ldots+x_N^2=1\right)$$
Its half-liberated version $S^{N-1}_{\mathbb R,*}\subset S^{N-1}_{\mathbb R,+}$ is obtained by assuming $x_ix_jx_k=x_kx_jx_i$.
\end{definition}

Observe that we have inclusions $S^{N-1}_\mathbb R\subset S^{N-1}_{\mathbb R,*}\subset S^{N-1}_{\mathbb R,+}$. It is known from \cite{bgo}, \cite{bg1} that the corresponding quantum isometry groups are $O_N\subset O_N^*\subset O_N^+$. A twisted version of this result was established in \cite{ba3}. Further results include the construction of the eigenspaces of the Laplacian. We will be back later on to some of these topics, with full details.

Let us restrict now attention to the algebraic submanifolds $X\subset S^{N-1}_{\mathbb R,+}$. These are defined in analogy with the usual algebraic manifolds $X\subset S^{N-1}_\mathbb R$, as follows:

\begin{definition}
A closed subset $X\subset S^{N-1}_{\mathbb R,+}$ is called algebraic when
$$C(X)=C(S^{N-1}_{\mathbb R,+})/<P_1,P_2,\ldots>$$
where $P_i$ are noncommutative polynomials in the variables $x_1,\ldots,x_N$.
\end{definition}

As a first example, observe that the subspheres $S^{N-1}_\mathbb R,S^{N-1}_{\mathbb R,*}\subset S^{N-1}_{\mathbb R,+}$ are both algebraic, because they appear respectively from the following polynomials:
\begin{eqnarray*}
P_{ij}&=&x_ix_j-x_jx_i\\
P_{ijk}&=&x_ix_jx_k-x_kx_jx_i
\end{eqnarray*}

Observe also that, the usual sphere $S^{N-1}_\mathbb R$ being algebraic in the above sense, any algebraic submanifold $X\subset S^{N-1}_\mathbb R$ is as well algebraic in the above sense.

Another class of examples are the noncommutative cubes. Let us begin with:

\begin{proposition}
Any quotient $\mathbb Z_2^{*N}\to\Gamma\to\mathbb Z_2^N$ can be presented with relations
$$M_\alpha(g_1,\ldots,g_N)=N_\alpha(g_1,\ldots,g_N)$$
with the noncommutative monomials $M_\alpha,N_\alpha$ having the same degree in each variable.
\end{proposition}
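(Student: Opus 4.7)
The plan is to take an arbitrary presentation $\Gamma = \mathbb Z_2^{*N}/\langle\langle r_\alpha\rangle\rangle$ and to rewrite, individually, each relation $r_\alpha = 1$ as an equivalent relation $M_\alpha = N_\alpha$ between noncommutative monomials of matching multi-degree in $g_1,\ldots,g_N$.

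The first step is to exploit the hypothesis that the surjection $\mathbb Z_2^{*N}\to\Gamma$ factors through the abelianization $\mathbb Z_2^N$: each defining word $r_\alpha$ must become trivial when the generators are declared to commute (still with $g_i^2=1$), which forces every $g_i$ to occur an even number of times in $r_\alpha$. Denote this count by $2e_i$ (depending on $\alpha$). Writing $r_\alpha = g_{i_1}g_{i_2}\cdots g_{i_n}$, I would then set
$$M_\alpha = g_1^{2e_1}g_2^{2e_2}\cdots g_N^{2e_N},\qquad N_\alpha = g_{i_n}g_{i_{n-1}}\cdots g_{i_1},$$
both viewed as formal noncommutative monomials. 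By construction each of them contains exactly $2e_i$ copies of $g_i$, so they have the same degree in every variable.

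It remains to check that replacing the relation $r_\alpha=1$ by $M_\alpha=N_\alpha$ does not change the quotient. Inside $\mathbb Z_2^{*N}$ the monomial $M_\alpha$ already equals $1$, being a product of squares of generators; and $N_\alpha$ equals $r_\alpha^{-1}$, because inverting a word in involutions amounts to reversing it. Hence $M_\alpha N_\alpha^{-1}=r_\alpha$ in $\mathbb Z_2^{*N}$, so the two families of relations generate the same normal subgroup, which yields the desired presentation. There is no real obstacle here: the whole argument is a bookkeeping observation, the key point being that the factoring through $\mathbb Z_2^N$ is exactly what makes the even-occurrence condition available, and hence exactly what allows the two sides of the rewritten relation to be balanced in each variable.
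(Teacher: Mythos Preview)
Your proof is correct and rests on the same key observation as the paper's: the factoring through $\mathbb Z_2^N$ forces each generator to occur an even number of times in every relator, which is precisely what allows the two sides to be balanced. The paper phrases the balancing step as ``replace where needed the variables $g_i$ by $g_i^s$ with $s$ odd,'' whereas you give the explicit choice $M_\alpha=\prod_i g_i^{2e_i}$, $N_\alpha=r_\alpha^{-1}$; your version is cleaner (in particular it sidesteps the awkward case where some $g_i$ is absent from one side of the original relation), but the underlying idea is the same.
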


\begin{proof}
Let $M_\alpha=N_\alpha$ be one of the relations presenting $\Gamma$, as a quotient of $\mathbb Z_2^{*N}$. This relation is by definition of the following type, for certain multi-indices $i,j$:
$$g_{i_1}\ldots g_{i_k}=g_{j_1}\ldots g_{j_l}$$

Since we have a quotient map $\Gamma\to\mathbb Z_2^N$ we deduce that we have $\ker(^i_j)\in P_{even}$, and by replacing where needed the variables $g_i$ by variables of type $g_i^s$ with $s$ odd, we can obtain a relation $M_\alpha'=N_\alpha'$ which is equivalent to $M_\alpha=N_\alpha$, as in the statement.
\end{proof}

We call a presentation as above ``normalized''. With this convention, we have:

\begin{proposition}
Given a reflection group $\mathbb Z_2^{*N}\to\Gamma\to\mathbb Z_2^N$, its dual is an algebraic manifold $\widehat{\Gamma}\subset S^{N-1}_{\mathbb R,+}$, with coordinates $x_i=\frac{g_i}{\sqrt{N}}$. Moreover, we have embeddings
$$\xymatrix@R=15mm@C=20mm
{S^{N-1}_\mathbb R\ar[rr]&&S^{N-1}_{\mathbb R,+}\\
\widehat{\mathbb Z_2^N}\ar[u]\ar[r]&\widehat{\Gamma}\ar[r]&\widehat{\mathbb Z_2^{*N}}\ar[u]}$$
with $\widehat{\Gamma}\subset\widehat{\mathbb Z_2^{*N}}$ appearing via the normalized group relations for $\Gamma$, and with $\widehat{\mathbb Z_2^N}\subset S^{N-1}_\mathbb R$ appearing as the standard cube/sphere inclusion, $\{x\in\mathbb R^N|x_i=\pm\frac{1}{\sqrt{N}},\forall i\}\subset S^{N-1}_\mathbb R$.
\end{proposition}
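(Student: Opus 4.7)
The plan is to unpack each of the three assertions in turn, starting from the observation that in $C^*(\Gamma)$ the generators $g_1,\ldots,g_N$ are self-adjoint unitaries (since $g_i^2=1$ in any quotient of $\mathbb Z_2^{*N}$), and then transporting this through the rescaling $x_i=g_i/\sqrt{N}$.

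First, I would verify the embedding $\widehat{\Gamma}\subset S^{N-1}_{\mathbb R,+}$ at the universal level $\Gamma=\mathbb Z_2^{*N}$. The elements $x_i=g_i/\sqrt{N}$ are self-adjoint and satisfy $\sum_ix_i^2=\frac{1}{N}\sum_i g_i^2=1$, so the universal property of $C(S^{N-1}_{\mathbb R,+})$ (Definition 4.1) produces a surjective $*$-homomorphism $C(S^{N-1}_{\mathbb R,+})\to C^*(\mathbb Z_2^{*N})$ sending $x_i\mapsto g_i/\sqrt{N}$, i.e.\ an embedding $\widehat{\mathbb Z_2^{*N}}\subset S^{N-1}_{\mathbb R,+}$. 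Composing with the quotient map $C^*(\mathbb Z_2^{*N})\twoheadrightarrow C^*(\Gamma)$ gives the factorization $\widehat{\Gamma}\subset\widehat{\mathbb Z_2^{*N}}\subset S^{N-1}_{\mathbb R,+}$ claimed in the diagram.

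Next, I would show that $\widehat{\Gamma}$ is cut out inside $S^{N-1}_{\mathbb R,+}$ by noncommutative polynomial relations, which is the content of the algebraicity statement. Here I would invoke Proposition 3.3: fix a normalized presentation $M_\alpha(g_1,\ldots,g_N)=N_\alpha(g_1,\ldots,g_N)$ of $\Gamma$, where $M_\alpha$ and $N_\alpha$ have the same degree in each variable. Since the total degrees then agree, say $k_\alpha$, substituting $g_i=\sqrt{N}\,x_i$ yields
$$N^{k_\alpha/2}M_\alpha(x_1,\ldots,x_N)=N^{k_\alpha/2}N_\alpha(x_1,\ldots,x_N),$$
so after dividing by $N^{k_\alpha/2}$ the relations become honest polynomial equations $P_\alpha(x)=M_\alpha(x)-N_\alpha(x)=0$ in the coordinates $x_i$, and $C(\widehat{\Gamma})$ is the corresponding quotient of $C(S^{N-1}_{\mathbb R,+})$. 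This simultaneously establishes the algebraic structure and identifies $\widehat{\Gamma}\subset\widehat{\mathbb Z_2^{*N}}$ as the subspace cut out by these normalized relations.

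Finally, for the bottom-left corner, I would invoke Gelfand duality for the abelian $C^*$-algebra $C^*(\mathbb Z_2^N)$. Its spectrum is the Pontryagin dual, which consists of the $2^N$ characters $\chi_\varepsilon(g_i)=\varepsilon_i$ with $\varepsilon\in\{-1,1\}^N$; under the identification $x_i=g_i/\sqrt{N}$ each character evaluates to $\chi_\varepsilon(x_i)=\varepsilon_i/\sqrt{N}$, placing $\widehat{\mathbb Z_2^N}$ precisely at the rescaled cube $\{x\in\mathbb R^N\mid x_i=\pm 1/\sqrt{N}\}\subset S^{N-1}_\mathbb R$. The vertical inclusion $\widehat{\mathbb Z_2^N}\subset\widehat{\Gamma}$ is automatic from the factorization $\mathbb Z_2^{*N}\to\Gamma\to\mathbb Z_2^N$, and the whole diagram commutes because every map sends $g_i$ to $g_i$ (or to its rescaled image $x_i$).

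The only non-routine step is the second one: one must be sure that the presentation can be arranged with matched degrees in each variable so that the $\sqrt{N}$ factors cancel globally and leave genuine polynomial relations — and that is exactly what Proposition 3.3 delivers, making the argument essentially mechanical once that normalization is in hand.
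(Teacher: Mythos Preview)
Your argument is correct and matches the paper's proof in substance: both pivot on the normalized presentation from Proposition~3.3 to convert group relations into polynomial relations in the coordinates $x_i$, and your explicit character computation for $\widehat{\mathbb Z_2^N}$ is equivalent to the paper's cardinality argument (image contained in the $2^N$-point cube, hence equal to it). Two cosmetic slips: the free sphere is Definition~3.1, not~4.1, and the inclusion $\widehat{\mathbb Z_2^N}\subset\widehat{\Gamma}$ is a \emph{horizontal} arrow in the diagram, not vertical.
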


\begin{proof}
Since $\Gamma$ is a reflection group, we have $g_i=g_i^*$, $g_1^2=1$ inside the group algebra $C^*(\Gamma)=C(\widehat{\Gamma})$, and we deduce that $x_i=\frac{g_i}{\sqrt{N}}$ defines indeed an embedding $\widehat{\Gamma}\subset S^{N-1}_{\mathbb R,+}$.

Regarding now the diagram in the statement, we can construct it by using these maps $x_i=\frac{g_i}{\sqrt{N}}$, for the groups $\Gamma=\mathbb Z_2^N,\mathbb Z_2^{*N}$, at left and at right, and by dualizing the quotient maps $\mathbb Z_2^{*N}\to\Gamma\to\mathbb Z_2^N$ in order to construct the inclusions on the bottom.

The assertion about $\widehat{\Gamma}\subset\widehat{\mathbb Z_2^{*N}}$, which proves in particular that $\widehat{\Gamma}$ is an algebraic manifold, is clear as well. Indeed, the quotient map $C^*(\mathbb Z_2^{*N})\to C^*(\Gamma)$ comes by imposing the relations $M_\alpha=N_\alpha$ to the group elements $g_i$, and by assuming that these relations are normalized, this is the same as imposing them to the coordinates $x_i=\frac{g_i}{\sqrt{N}}$.

Finally, regarding the last assertion, for $\Gamma=\mathbb Z_2^N$ the space $\widehat{\Gamma}$ is classical, so by abelianizing, the embedding $\widehat{\Gamma}\subset S^{N-1}_{\mathbb R,+}$ must come from an embedding $\widehat{\Gamma}\subset S^{N-1}_\mathbb R$. Morever, since this latter embedding is given by $x_i=\frac{g_i}{\sqrt{N}}$, the points in its image must satisfy $x_i^2=\frac{1}{N}$ for any $i$, so the image is contained in $\{x\in\mathbb R^N|x_i=\pm\frac{1}{\sqrt{N}},\forall i\}$. Now since this latter set has the same cardinality as $\widehat{\Gamma}$, namely $2^N$, we obtain the result.
\end{proof}

We will be interested in computing the quantum isometry groups of the noncommutative cubes $\widehat{\Gamma}$, and of some related noncommutative spheres as well. We use here:

\begin{definition}
An affine action of an orthogonal quantum group $G\subset O_N^+$ on a closed subset $X\subset S^{N-1}_{\mathbb R,+}$ corresponds by definition to a coaction map 
$$\Phi:C(X)\to C(G)\otimes C(X)$$
given by $\Phi(x_i)=\sum_ju_{ij}\otimes x_j$, where $x_i,u_{ij}$ are the standard coordinates of $X,G$.
\end{definition}

In the classical case, it is well-known that any isometry of a closed subset $X\subset S^{N-1}_\mathbb R$ is affine. If we assume in addition that $X$ is  non-degenerate, in the sense that its coordinates $x_1,\ldots,x_N\in C(X)$ are linearly independent, then different affine isometries $U\in O_N$ of $X$ will have different restrictions $U_{|X}:X\to X$, and so the usual isometry group $G(X)$ is isomorphic to the biggest subgroup $G\subset O_N$ acting affinely on $X$. Moreover, as explained by Goswami in \cite{go2}, the quantum isometry group $G^+(X)$, taken in a metric space sense, exists, and is isomorphic to the biggest subgroup $G\subset O_N^+$ acting affinely on $X$.

In the general case, $X\subset S^{N-1}_{\mathbb R,+}$, no such results are  available, and this due to several technical difficulties, still waiting to be overcomed. See \cite{chi}, \cite{go2}, \cite{qsa}, \cite{rw2}. For the purposes of the present paper, best is to proceed as follows:

\begin{proposition}
Let $X\subset S^{N-1}_{\mathbb R,+}$ be algebraic, and non-degenerate, in the sense that the coordinates $x_1,\ldots,x_N\in C(X)$ are linearly independent. Then the quantum group
$$G^+(X)=\max\left\{G\subset O_N^+\Big|G\curvearrowright X\right\}$$
exists. We call it quantum (affine) isometry group of $X$.
\end{proposition}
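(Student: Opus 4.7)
The plan is to construct $C(G^+(X))$ as an explicit quotient of $C(O_N^+)$ and verify the universal property. The starting observation is that the tautological formula $\Phi_0(x_i) := \sum_j u_{ij} \otimes x_j$ already defines a $*$-homomorphism $\Phi_0 : C(S^{N-1}_{\mathbb R,+}) \to C(O_N^+) \otimes C(S^{N-1}_{\mathbb R,+})$, because the $\Phi_0(x_i)$ are self-adjoint and satisfy $\sum_i \Phi_0(x_i)^2 = 1 \otimes 1$ by the orthogonality of $u$. Writing $C(X) = C(S^{N-1}_{\mathbb R,+})/\langle P_1, P_2, \ldots\rangle$ with the $P_\alpha$ as in Definition 3.2, the obstruction to $\Phi_0$ descending to a coaction on $C(X)$ is that the elements $P_\alpha(\Phi_0(x_1), \ldots, \Phi_0(x_N))$ may be nonzero in $C(O_N^+) \otimes C(X)$, and imposing their vanishing gives additional relations on the $u_{ij}$.

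Concretely, I would expand each $P_\alpha(\Phi_0(x))$ inside $\mathcal{P}(O_N^+) \otimes \mathcal{P}(X)$ (the $*$-algebras generated by $u_{ij}$ and $x_i$ respectively) as a finite sum $\sum_\beta r^\alpha_\beta \otimes f^\alpha_\beta$, with the family $\{f^\alpha_\beta\}_\beta$ chosen linearly independent in $\mathcal{P}(X)$. The non-degeneracy hypothesis is precisely what makes this coefficient-extraction procedure well-defined for degree-one terms, and by working modulo a fixed spanning set of monomials it extends to the higher-degree terms as well. Let $J \subset C(O_N^+)$ be the closed two-sided $*$-ideal generated by the $r^\alpha_\beta$, and define $C(G^+(X)) := C(O_N^+)/J$. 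By construction, $\Phi_0$ descends to a $*$-homomorphism $\Phi : C(X) \to C(G^+(X)) \otimes C(X)$ of the form $\Phi(x_i) = \sum_j \overline{u}_{ij} \otimes x_j$.

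It then remains to endow $C(G^+(X))$ with a compact quantum group structure, which amounts to showing that $J$ is a Hopf $*$-ideal of $C(O_N^+)$. The counit condition $\varepsilon(r^\alpha_\beta)=0$ follows from $(\varepsilon \otimes \mathrm{id})\Phi_0(f) = f$ applied to $f = P_\alpha$, together with linear independence of the $f^\alpha_\beta$. The comultiplication condition uses the coassociativity identity $(\Delta \otimes \mathrm{id})\Phi_0 = (\mathrm{id} \otimes \Phi_0)\Phi_0$: comparing coefficients of $f^\alpha_\beta$ after applying both sides to $P_\alpha$ yields $\Delta(r^\alpha_\beta) \in J \otimes \mathcal{P}(O_N^+) + \mathcal{P}(O_N^+) \otimes J$. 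The antipode is handled analogously via $S(u_{ij}) = u_{ji}$. Universality is then automatic: for any $G' \subset O_N^+$ carrying an affine coaction $\Phi'$ on $X$, non-degeneracy uniquely determines its coordinates $u'_{ij}$ from $\Phi'$, and the required identities $P_\alpha(\Phi'(x)) = 0$ force the $r^\alpha_\beta$ to map to zero under $C(O_N^+) \to C(G')$, so this surjection factors through $C(G^+(X))$. The main technical obstacle I foresee is the rigorous coefficient-extraction step, namely checking that $J$ is independent of the choice of linearly independent family in $\mathcal{P}(X)$ used to expand the $P_\alpha(\Phi_0(x))$, and that the algebraic construction passes correctly to the $C^*$-completion; as the prefatory remarks to the statement already indicate with references to \cite{chi}, \cite{go2}, \cite{qsa}, these subtleties are exactly the ones that obstruct a fully satisfactory general theory, but in the present algebraic, non-degenerate setting they can be handled by standard manipulations.
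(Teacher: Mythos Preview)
Your approach is essentially the paper's: quotient $C(O_N^+)$ by the relations forced by $P_\alpha(X_1,\ldots,X_N)=0$ in $C(O_N^+)\otimes C(X)$, then verify the Hopf structure. Two points of comparison are worth making. First, the technical worry you flag at the end---independence of $J$ from the choice of linearly independent family, and passage to the $C^*$-level---is dispatched in the paper by a single observation you underemphasize: for $P_\alpha$ of degree at most $k$, the element $P_\alpha(X_1,\ldots,X_N)$ lies in $C(O_N^+)\otimes E_k$ with $E_k=\mathrm{span}(x_{i_1}\cdots x_{i_s}:s\le k)\subset C(X)$ \emph{finite-dimensional}; picking any basis of $E_k$ extracts finitely many polynomial relations in the $u_{ij}$, and the ideal they generate is manifestly basis-independent. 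Non-degeneracy is not actually invoked in this step (nor anywhere in the paper's proof), so your attribution of the coefficient extraction to non-degeneracy is off---it is the finite-dimensionality of $E_k$ that does the work. Second, for the Hopf structure the paper uses a slightly more compact device than your Hopf-ideal check via $(\Delta\otimes\mathrm{id})\Phi_0=(\mathrm{id}\otimes\Phi_0)\Phi_0$: setting $u_{ij}^\Delta=\sum_k u_{ik}\otimes u_{kj}\in A\otimes A$, $u_{ij}^\varepsilon=\delta_{ij}\in\mathbb C$, $u_{ij}^S=u_{ji}\in A^{opp}$ and $X_i^\gamma=\sum_j u_{ij}^\gamma\otimes x_j$, one has $P_\alpha(X_1^\gamma,\ldots,X_N^\gamma)=(\gamma\otimes\mathrm{id})P_\alpha(X_1,\ldots,X_N)=0$, which produces $\Delta,\varepsilon,S$ simultaneously by the universal property of $A$. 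This is equivalent to your argument but packages the three cases uniformly and sidesteps the bookkeeping of comparing coefficients against a fixed family $\{f^\alpha_\beta\}$.
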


\begin{proof}
The relations defining $G^+(X)$ being those making $x_i\to X_i=\sum_ju_{ij}\otimes x_j$ a morphism of algebras, we first have to clarify how the relations $P_i(X_1,\ldots,X_N)=0$ are interpreted inside $C(O_N^+)$. So, pick one of these polynomials, $P=P_i$, and write it:
$$P(x_1,\ldots,x_N)=\sum_r\alpha_r\cdot x_{i_1^r}\ldots x_{i_{s(r)}^r}$$

When replacing each $x_i\in C(X)$ by the element $X_i=\sum_ju_{ij}\otimes x_j\in C(O_N^+)\otimes C(X)$, we obtain the following formula:
$$P(X_1,\ldots,X_N)=\sum_r\alpha_r\sum_{j_1^r\ldots j_{s(r)}^r}u_{i_1^rj_1^r}\ldots u_{i_{s(r)}^rj_{s(r)}^r}\otimes x_{j_1^r}\ldots x_{j_{s(r)}^r}$$

If we set $k=\max_rs(r)$, then we have $P(X_1,\ldots,X_N)\in C(O_N^+)\otimes E_k$, where $E_k\subset C(X)$ is the linear space given by the following formula:
$$E_k=span\left(x_{i_1}\ldots x_{i_s}\Big|s\leq k\right)$$

Now since this space $E_k$ is finite dimensional, the relations $P(X_1,\ldots,X_N)=0$ correspond indeed to certain polynomial relations between the generators $u_{ij}$ of the algebra $C(O_N^+)$, and this finishes the proof of the existence/uniqueness of $G^+(X)$.

It remains to verify that the closed subspace $G^+(X)\subset O_N^+$ that we have constructed is indeed a quantum group. For this purpose, consider the following elements:
$$u_{ij}^\Delta=\sum_ku_{ik}\otimes u_{kj}\quad,\quad
u_{ij}^\varepsilon=\delta_{ij}\quad,\quad
u_{ij}^S=u_{ji}$$

Here, with $A=C(G^+(X))$, the elements $u_{ij}^\Delta$ belong by definition to $A\otimes A$, the elements $u_{ij}^\varepsilon$ belong to $\mathbb C$, and the elements $u_{ij}^S$ belong to the opposite algebra $A^{opp}$.

Now if we consider the associated elements $X_i^\gamma=\sum_ju_{ij}^\gamma\otimes x_j$, with $\gamma\in\{\Delta,\varepsilon,S\}$, then from $P(X_1,\ldots,X_N)=0$ we deduce that we have:
$$P(X_1^\gamma,\ldots,X_N^\gamma)=(\gamma\otimes id)P(X_1,\ldots,X_N)=0$$

Thus, by using the universal property of $G^+(X)$, we can construct morphisms of algebras mapping $u_{ij}\to u_{ij}^\gamma$ for any $\gamma\in\{\Delta,\varepsilon,S\}$, and this finishes the proof.
\end{proof} 

Let us first examine the basic examples of quantum groups $G^+(\widehat{\Gamma})$. The results here, some of them being already known from \cite{bbc}, \cite{bs2}, are as follows:

\begin{theorem}
The quantum isometry groups of basic noncommutative cubes are
$$\xymatrix@R=15mm@C=20mm{
\widehat{\mathbb Z_2^N}\ar[r]\ar@{~}[d]&\widehat{\mathbb Z_2^{\circ N}}\ar[r]\ar@{~}[d]&\widehat{\mathbb Z_2^{*N}}\ar@{~}[d]\\
\bar{O}_N\ar@{.}[r]&H_N^*\ar[r]&H_N^+}$$
with all arrows being inclusions, and with no map at bottom left.
\end{theorem}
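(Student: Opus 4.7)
My plan is to handle the three cubes case by case, using Proposition 3.6 to identify $G^+(\widehat\Gamma)$ as the universal $G\subset O_N^+$ whose coaction $\Phi(x_i)=\sum_j u_{ij}\otimes x_j$ preserves the defining relations of $\widehat\Gamma$. Given the normalized presentation from Proposition 3.3, each relation defining $C^*(\Gamma)$ becomes, after substitution, a polynomial identity among the $u_{ij}$; the relations on the $u_{ij}$ are then extracted by expanding against an explicit linear basis of $C^*(\Gamma)$. For each case I will verify the reverse inclusion by directly checking that the claimed quantum group acts affinely. Starting with the rightmost case $\Gamma=\mathbb Z_2^{*N}$, the only relation is $x_i^2=1/N$; the reduced-word basis of $C^*(\mathbb Z_2^{*N})$ shows that $\{x_jx_k:j\ne k\}$ is linearly independent from $1$, so the expansion of $\Phi(x_i^2)=1/N$ forces $u_{ij}u_{ik}=0$ for $j\ne k$. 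With the column version coming from the antipode $S(u_{ij})=u_{ji}$, these are precisely the defining relations of $H_N^+$, which visibly acts.

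For $\Gamma=\mathbb Z_2^{\circ N}$ I add the half-commutation $x_ix_jx_k=x_kx_jx_i$. The degree-two argument from the previous case already gives $G^+(\widehat{\mathbb Z_2^{\circ N}})\subset H_N^+$. Expanding $\Phi(x_ix_jx_k)=\Phi(x_kx_jx_i)$ against the half-commutative basis of $C^*(\mathbb Z_2^{\circ N})$, in which $g_ag_bg_c$ and $g_cg_bg_a$ are identified but otherwise reduced words are distinct, and then using the $H_N^+$ row/column vanishings $u_{ij}u_{ik}=0=u_{ji}u_{ki}$ to collapse the resulting equations, yields the half-commutation $abc=cba$ among the $u_{ij}$. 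Thus $G^+(\widehat{\mathbb Z_2^{\circ N}})\subset H_N^+\cap O_N^*=H_N^*$. The reverse inclusion is immediate, since $H_N^*$ preserves both $x_i^2=1/N$ and the half-commutation.

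The case $\Gamma=\mathbb Z_2^N$ is where the main difficulty lies. Commutativity of the coordinates forces me to re-bundle the expansion of $\Phi(x_i^2)=1/N$ using $x_jx_k=x_kx_j$: the coefficient of $x_jx_k$ with $j<k$ is $u_{ij}u_{ik}+u_{ik}u_{ij}$, giving the row anticommutation $u_{ij}u_{ik}+u_{ik}u_{ij}=0$ characteristic of $\bar O_N$, with the column version coming from the antipode. The equation $\Phi(x_i)\Phi(x_j)=\Phi(x_j)\Phi(x_i)$ produces a further identity $[u_{ik},u_{jl}]+[u_{il},u_{jk}]=0$, which is strictly weaker than the distinct-row-and-column commutation $[u_{ik},u_{jl}]=0$ required by $\bar O_N$. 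I expect to upgrade this by applying $\Phi$ to a higher-degree monomial such as $x_ix_jx_kx_l$ with distinct indices, and using the row and column anticommutations already in hand to collapse the resulting expansion and isolate the commutation. The reverse inclusion $\bar O_N\subset G^+(\widehat{\mathbb Z_2^N})$ is a direct verification that the $\bar O_N$-relations, together with $\sum_j u_{ij}^2=1$, imply all cube relations. Finally, the missing bottom-left arrow: an inclusion $\bar O_N\hookrightarrow H_N^*$ would combine $u_{ij}u_{ik}=-u_{ik}u_{ij}$ with $u_{ij}u_{ik}=0$ to force $u_{ij}u_{ik}=0$ for all $j\ne k$, collapsing $\bar O_N$ to the classical $H_N$, contradicting $\bar O_N\ne H_N$. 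The main technical hurdle throughout is thus the derivation of the distinct-row-and-column commutation of $\bar O_N$ from higher-order cube identities.
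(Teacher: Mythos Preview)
Your overall strategy and the free case $\Gamma=\mathbb Z_2^{*N}$ are correct and match the paper. The gap is in the commutative case $\Gamma=\mathbb Z_2^N$, and the same issue makes your half-liberated case too optimistic.

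For $\Gamma=\mathbb Z_2^N$, your plan to extract the missing commutation $[u_{ik},u_{jl}]=0$ by applying $\Phi$ to a degree-four monomial cannot succeed. The algebra $C^*(\mathbb Z_2^N)$ is presented by the degree-two relations $g_i^2=1$ and $g_ig_j=g_jg_i$ alone; every higher-degree identity is a formal consequence of these, so imposing $\Phi(x_ix_jx_kx_l)=\Phi(x_{\sigma(i)}x_{\sigma(j)}x_{\sigma(k)}x_{\sigma(l)})$ yields nothing beyond what $G_i^2=1$ and $[G_i,G_j]=0$ already gave you, namely the row/column anticommutation and the ``weak'' relation $[u_{ik},u_{jl}]+[u_{il},u_{jk}]=0$ for $k\neq l$. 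The paper closes this gap not with higher monomials but with the antipode: applying $S$ (which sends $u_{ab}\mapsto u_{ba}$ and reverses products) to $[u_{ik},u_{jl}]=[u_{jk},u_{il}]$ and relabelling produces $[u_{ik},u_{jl}]=[u_{il},u_{jk}]$, now valid for $i\neq j$; comparing the two versions for $i\neq j,\ k\neq l$ forces both commutators to vanish. This is a genuine use of the quantum-group structure of $G$, not just of the coaction.

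The same antipode trick is what drives the half-liberated case. After reducing to $G\subset H_N^+$ and expanding $[G_i,G_j,G_k]$, the paper arrives at $[u_{ia},u_{jb},u_{kc}]+[u_{ic},u_{jb},u_{ka}]=0$ for $b\neq a,c$; applying $S$ and relabelling turns the second term into $-[u_{ia},u_{jb},u_{kc}]$, giving $[u_{ia},u_{jb},u_{kc}]=0$ under the side conditions $j\neq i,k$ and $b\neq a,c$. One then passes through the intermediate quantum group $H_N^{[\infty]}$ (via the special case $c=a$) to drop these side conditions and reach full half-commutation. Your sentence ``using the $H_N^+$ vanishings to collapse the resulting equations yields the half-commutation'' hides both of these steps; without the antipode you will again be stuck with a symmetrized relation that is strictly weaker than what you need.
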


\begin{proof}
The results in the classical and free cases are known from \cite{bbc}, \cite{bs2}, and the half-liberated result is new. We will present here complete proofs for all the results.

In all cases we must find the conditions on a closed subgroup $G\subset O_N^+$ such that $g_i\to \sum_ju_{ij}\otimes g_j$ defines a coaction. Since the coassociativity of such a map is automatic, we are left with checking that the map itself exists, and this is the same as checking that the variables $G_i=\sum_ju_{ij}\otimes g_j$ satisfy the same relations as the generators $g_i\in G$.

(1) For $\Gamma=\mathbb Z_2^N$ the relations to be checked are $G_i^2=1,G_iG_j=G_jG_i$. We have:
\begin{eqnarray*}
G_i^2&=&\sum_{kl}u_{ik}u_{il}\otimes g_kg_l=1+\sum_{k<l}(u_{ik}u_{il}+u_{il}u_{ik})\otimes g_kg_l\\
\left[G_i,G_j\right]&=&\sum_{k<l}(u_{ik}u_{jl}-u_{jk}u_{il}+u_{il}u_{jk}-u_{jl}u_{ik})\otimes g_kg_l
\end{eqnarray*}

From the first relation we obtain $ab=0$ for $a\neq b$ on the same row of $u$, and by using the antipode, the same happens for the columns. From the second relation we obtain $[u_{ik},u_{jl}]=[u_{jk},u_{il}]$ for $k\neq l$. Now by applying the antipode we obtain $[u_{lj},u_{ki}]=[u_{li},u_{kj}]$, and by relabelling, this gives $[u_{ik},u_{jl}]=[u_{il},u_{jk}]$ for $j\neq i$. Thus for $i\neq j,k\neq l$ we must have $[u_{ik},u_{jl}]=[u_{jk},u_{il}]=0$, and we are therefore led to $G\subset\bar{O}_N$, as claimed.

(2) For $\Gamma=\mathbb Z_2^{\circ N}$ the relations to be checked are $G_i^2=1,G_iG_jG_k=G_kG_jG_i$. With the notation $[a,b,c]=abc-cba$, we have:
\begin{eqnarray*}
G_i^2&=&\sum_{kl}u_{ik}u_{il}\otimes g_kg_l=1+\sum_{k\neq l}u_{ik}u_{il}\otimes g_kg_l\\
\left[G_i,G_j,G_k\right]&=&\sum_{abc}[u_{ia},u_{jb},u_{kc}]\otimes g_ag_bg_c
\end{eqnarray*}

From the first relation we obtain $G\subset H_N^+$. In order to process now the second relation, we can split the sum over $a,b,c$ in the following way:
\begin{eqnarray*}
\left[G_i,G_j,G_k\right]
&=&\sum_{a,b,c\ distinct}[u_{ia},u_{jb},u_{kc}]\otimes g_ag_bg_c+\sum_{a\neq b}[u_{ia},u_{jb},u_{ka}]\otimes g_ag_bg_a\\
&+&\sum_{a\neq c}[u_{ia},u_{ja},u_{kc}]\otimes g_c+\sum_{a\neq c}[u_{ia},u_{jc},u_{kc}]\otimes g_a\\
&+&\sum_a[u_{ia},u_{ja},u_{ka}]\otimes g_a
\end{eqnarray*}

Our claim is that the last three sums vanish. Indeed, $[u_{ia},u_{ja},u_{ka}]=\delta_{ijk}u_{ia}-\delta_{ijk}u_{ia}=0$, so the last sum vanishes. Regarding now the third sum, we have:
\begin{eqnarray*}
\sum_{a\neq c}[u_{ia},u_{ja},u_{kc}]
&=&\sum_{a\neq c}u_{ia}u_{ja}u_{kc}-u_{kc}u_{ja}u_{ia}
=\sum_{a\neq c}\delta_{ij}u_{ia}^2u_{kc}-\delta_{ij}u_{kc}u_{ia}^2\\
&=&\delta_{ij}\sum_{a\neq c}[u_{ia}^2,u_{kc}]=\delta_{ij}\left[\sum_{a\neq c}u_{ia}^2,u_{kc}\right]=\delta_{ij}[1-u_{ic}^2,u_{kc}]=0
\end{eqnarray*}

The proof for the fourth sum is similar. Thus, we are left with the first two sums. By using $g_ag_bg_c=g_cg_bg_a$ for the first sum, the formula becomes:
\begin{eqnarray*}
\left[G_i,G_j,G_k\right]
&=&\sum_{a<c,b\neq a,c}\left([u_{ia},u_{jb},u_{kc}]+[u_{ic},u_{jb},u_{ka}]\right)\otimes g_ag_bg_c\\
&+&\sum_{a\neq b}[u_{ia},u_{jb},u_{ka}]\otimes g_ag_bg_a
\end{eqnarray*}

In order to have a coaction, the above coefficients must vanish. Now observe that, when setting $a=c$ in the coefficients of the first sum, we obtain twice the coefficients of the second sum. Thus, our vanishing conditions can be formulated as follows:
$$[u_{ia},u_{jb},u_{kc}]+[u_{ic},u_{jb},u_{ka}]=0,\forall b\neq a,c$$

Now observe that at $i=j$ or $j=k$ this condition reads $0+0=0$. Thus, we can formulate our vanishing conditions in a more symmetric way, as follows:
$$[u_{ia},u_{jb},u_{kc}]+[u_{ic},u_{jb},u_{ka}]=0,\forall j\neq i,k,\forall b\neq a,c$$

We use now a trick from \cite{bg1}. We apply the antipode to this formula, and then we relabel the indices $i\leftrightarrow c,j\leftrightarrow b,k\leftrightarrow a$. We succesively obtain in this way:
$$[u_{ck},u_{bj},u_{ai}]+[u_{ak},u_{bj},u_{ci}]=0,\forall j\neq i,k,\forall b\neq a,c$$
$$[u_{ia},u_{jb},u_{kc}]+[u_{ka},u_{jb},u_{ic}]=0,\forall b\neq a,c,\forall j\neq i,k$$

Since we have $[a,b,c]=-[c,b,a]$, by comparing the last formula with the original one, we conclude that our vanishing relations reduce to a single formula, as follows:
$$[u_{ia},u_{jb},u_{kc}]=0,\forall j\neq i,k,\forall b\neq a,c$$

Our first claim is that this formula implies $G\subset H_N^{[\infty]}$. In order to prove this, we will just need the $c=a$ particular case of this formula, which reads:
$$u_{ia}u_{jb}u_{ka}=u_{ka}u_{jb}u_{ia},\forall j\neq i,k,\forall a\neq b$$

We know from \cite{bc1} that $H_N^{[\infty]}\subset O_N^+$ is defined via the relations $xyz=0$, for any $x\neq z$ on the same row or column of $u$. Thus, in order to prove that we have $G\subset H_N^{[\infty]}$, it is enough to check that the assumptions $j\neq i,k$ and $a\neq b$ can be dropped. But this is what happens indeed, because at $j=i$, $j=k$, $a=b$, we respectively have:
\begin{eqnarray*}
\left[u_{ia},u_{ib},u_{ka}\right]&=&u_{ia}u_{ib}u_{ka}-u_{ka}u_{ib}u_{ia}=\delta_{ab}(u_{ia}^2u_{ka}-u_{ka}u_{ia}^2)=0\\
\left[u_{ia},u_{kb},u_{ka}\right]&=&u_{ia}u_{kb}u_{ka}-u_{ka}u_{kb}u_{ia}=\delta_{ab}(u_{ia}u_{ka}^2-u_{ka}^2u_{ia})=0\\
\left[u_{ia},u_{ja},u_{ka}\right]&=&u_{ia}u_{ja}u_{ka}-u_{ka}u_{ja}u_{ia}=\delta_{ijk}(u_{ia}^3-u_{ia}^3)=0
\end{eqnarray*}

Our second claim now is that, due to $G\subset H_N^{[\infty]}$, we can drop the assumptions $j\neq i,k$ and $b\neq a,c$ in the original relations $[u_{ia},u_{jb},u_{kc}]=0$. Indeed, at $j=i$ we have:
$$[u_{ia},u_{ib},u_{kc}]=u_{ia}u_{ib}u_{kc}-u_{kc}u_{ib}u_{ia}=\delta_{ab}(u_{ia}^2u_{kc}-u_{kc}u_{ia}^2)=0$$

The proof at $j=k$ and at $b=a$, $b=c$ being similar, this finishes the proof of our claim. We conclude that the half-commutation relations $[u_{ia},u_{jb},u_{kc}]=0$ hold without any assumption on the indices, and so we obtain $G\subset H_N^*$, as claimed.

(3) For $\Gamma=\mathbb Z_2^{*N}$ the only relations to be checked are $G_i^2=1$. But these relations can be processed as in the proof of (2) above, and we obtain $G\subset H_N^+$, as claimed.
\end{proof}

The above computations, along with those in \cite{rw2}, lead to a number of interesting questions. We will be back to these questions in section 5 below, by jointly investigating them for the noncommutative cubes, and for the related noncommutative spheres.

\section{Noncommutative spheres}

In this section we upgrade the noncommutative sphere formalism from \cite{ba3}, \cite{ba4}, \cite{bgo}. The idea will be to replace the permutations $\sigma\in S_\infty\subset P_2$ used there by more general partitions $\pi\in P_{even}$. Our starting point is the following definition, from \cite{ba3}, \cite{ba4}:

\begin{definition}
Associated to any permutation $\sigma\in S_k$ are the sets of relations:
$$\mathcal R_\sigma=\left\{x_{i_1}\ldots x_{i_k}=x_{i_{\sigma(1)}}\ldots x_{i_{\sigma(k)}}\Big|\forall i_1,\ldots,i_k\right\}$$
$$\bar{\mathcal R}_\sigma=\left\{x_{i_1}\ldots x_{i_k}=\varepsilon(\sigma)x_{i_{\sigma(1)}}\ldots x_{i_{\sigma(k)}}\Big|\forall i_1,\ldots,i_k\right\}$$
We call these the untwisted/twisted relations associated to $\sigma$.
\end{definition}

Here the relations are between abstract variables $x_1,\ldots,x_N$, and we use the signature map $\varepsilon:P_{even}\to\{-1,1\}$ from \cite{ba3}, that we already met in section 2 above. 

As a basic example, for the standard crossing $\slash\!\!\!\backslash=(21)\in S_2$, we have:
$$\mathcal R_{\slash\!\!\!\backslash}=\{x_ix_j=x_jx_i|\forall i,j\}$$
$$\ \ \ \ \ \bar{\mathcal R}_{\slash\!\!\!\backslash}=\{x_ix_j=-x_jx_i|\forall i\neq j\}$$

Also, for the half-liberating permutation $\slash\hskip-2.0mm\backslash\hskip-1.7mm|=(321)\in S_3$, we have:
$$\mathcal R_{\slash\hskip-1.6mm\backslash\hskip-1.1mm|\hskip0.5mm}=\{x_ix_jx_k=x_kx_jx_i|\forall i,j,k\}$$
$$\hskip2.0mm\bar{\mathcal R}_{\slash\hskip-1.6mm\backslash\hskip-1.1mm|\hskip0.5mm}=\left[x_ix_jx_k=\begin{cases}
-x_kx_jx_i&\forall i,j,k\ {\rm distinct}\\
x_kx_jx_i&{\rm otherwise}
\end{cases}
\right]$$

These formulae follow indeed by using the signature computations from the proof of Proposition 2.2 above. For further details, and more examples, see \cite{ba3}, \cite{ba4}.

The point now is that by using the relations in Definition 4.1 above we can construct several types of families of noncommutative spheres, as follows:

\begin{definition}
We have the following spheres $X\subset S^{N-1}_{\mathbb R,+}$:
\begin{enumerate}
\item Linear spheres: $S^{N-1}_{\mathbb R,G}$ with $G\subset S_\infty$, defined via $\{\mathcal R_\sigma|\sigma\in G\}$.

\item Twisted linear spheres: $\bar{S}^{N-1}_{\mathbb R,H}$ with $H\subset S_\infty$, defined via $\{\bar{\mathcal R}_\sigma|\sigma\in H\}$.

\item Mixed linear spheres: $S^{N-1}_{\mathbb R,G,H}=S^{N-1}_{\mathbb R,G}\cap\bar{S}^{N-1}_{\mathbb R,H}$, with $G,H\subset S_\infty$.
\end{enumerate}
\end{definition}

Observe that the linear spheres cover the key examples $S^{N-1}_\mathbb R\subset S^{N-1}_{\mathbb R,*}\subset S^{N-1}_{\mathbb R,+}$ from \cite{bgo}. The twisted linear spheres cover the twists $\bar{S}^{N-1}_\mathbb R\subset\bar{S}^{N-1}_{\mathbb R,*}\subset S^{N-1}_{\mathbb R,+}$ constructed in \cite{ba3}, and the mixed linear sphere formalism covers these 5 examples, plus 4 more examples, which appear by intersecting $S^{N-1}_\mathbb R,S^{N-1}_{\mathbb R,*}$ with $\bar{S}^{N-1}_\mathbb R,\bar{S}^{N-1}_{\mathbb R,*}$, as follows:
$$\xymatrix@R=12mm@C=12mm{
S^{N-1}_\mathbb R\ar[r]&S^{N-1}_{\mathbb R,*}\ar[r]&S^{N-1}_{\mathbb R,+}\\
S^{N-1,1}_\mathbb R\ar[r]\ar[u]&S^{N-1,1}_{\mathbb R,*}\ar[r]\ar[u]&\bar{S}^{N-1}_{\mathbb R,*}\ar[u]\\
S^{N-1,0}_\mathbb R\ar[r]\ar[u]&\bar{S}^{N-1,1}_\mathbb R\ar[r]\ar[u]&\bar{S}^{N-1}_\mathbb R\ar[u]}$$

Here all 9 spheres, including the 4 examples at bottom left, which appear as intersections, are particular cases of the following construction from \cite{ba4}, with $d\in\{1,\ldots,N\}$:
$$C(S^{N-1,d-1}_{\mathbb R,\times})=C(S^{N-1}_{\mathbb R,\times})\Big/\Big\langle x_{i_0}\ldots x_{i_d}=0,\forall i_0,\ldots,i_d\ {\rm distinct}\Big\rangle$$

The mixed linear spheres can be studied by using the following concept, from \cite{ba4}:

\begin{proposition}
Let $S=S^{N-1}_{\mathbb R,G,H}$ be a mixed linear sphere, and consider the subsets $\widetilde{G},\widetilde{H}\subset S_\infty$ consisting of permutations $\sigma,\rho$ such that $\mathcal R_\sigma,\bar{\mathcal R}_\rho$ hold over $S$.
\begin{enumerate}
\item $S=S^{N-1}_{\mathbb R,\widetilde{G},\widetilde{H}}$, and $(\widetilde{G},\widetilde{H})$ is maximal with this property.

\item $\widetilde{G},\widetilde{H}$ are both subgroups of $S_\infty$, stable under concatenation.
\end{enumerate}
We call the writing $S=S^{N-1}_{\mathbb R,G,H}$ with $G,H$ maximal ``standard parametrization'' of $S$.
\end{proposition}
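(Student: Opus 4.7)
The plan is to dispatch part (1) essentially by unwinding definitions, and then to verify part (2) by checking, for each of $\widetilde{G}$ and $\widetilde{H}$, the group axioms (identity, composition, inverse) together with closure under concatenation, with the signs in the twisted case tracked by the multiplicativity of $\varepsilon$.

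For (1), observe that by construction every $\sigma\in\widetilde{G}$ and every $\rho\in\widetilde{H}$ give relations that already hold over $S$, so the quotient of $C(S^{N-1}_{\mathbb R,+})$ by all of these extended relations surjects onto $C(S)$; since conversely $G\subset\widetilde{G}$ and $H\subset\widetilde{H}$ imply $S^{N-1}_{\mathbb R,\widetilde{G},\widetilde{H}}\subset S$, we obtain $S=S^{N-1}_{\mathbb R,\widetilde{G},\widetilde{H}}$. Maximality is then immediate: any other presentation $S=S^{N-1}_{\mathbb R,G',H'}$ has its defining relations holding on $S$, hence $G'\subset\widetilde{G}$ and $H'\subset\widetilde{H}$.

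For (2), consider first $\widetilde{G}$. The identity relation $\mathcal R_e$ is trivially satisfied, so $e\in\widetilde{G}$. For composition with $\sigma,\tau\in S_k$, I apply $\mathcal R_\sigma$ to the indices $(i_1,\ldots,i_k)$, then substitute $j_a=i_{\sigma(a)}$ and apply $\mathcal R_\tau$ to the $j$-tuple, yielding $\mathcal R_{\sigma\tau}$. For the inverse, the analogous reindexing $j_a=i_{\sigma^{-1}(a)}$ in $\mathcal R_\sigma$ gives $\mathcal R_{\sigma^{-1}}$. Concatenation of $\sigma\in S_k$ and $\tau\in S_l$ is handled by splitting a length-$(k+l)$ product into its first $k$ and last $l$ factors and applying the two relations independently to each piece.

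The twisted analogues for $\widetilde{H}$ follow from the same manipulations, provided we know $\varepsilon$ is multiplicative under composition and concatenation, and satisfies $\varepsilon(\sigma^{-1})=\varepsilon(\sigma)$ and $\varepsilon(e)=1$. The composition identity is the usual homomorphism property of the signature on $S_\infty$; the concatenation identity is clean because placing $\sigma$ and $\tau$ side by side introduces no new crossings between the two blocks, so the minimal switch counts add. There is no substantial obstacle here: the proposition reduces to routine bookkeeping, the only mild care point being that the sign tracking must go through in all four checks for $\widetilde{H}$, which is guaranteed by the fact that $\varepsilon$ on $P_{even}$ restricts on $S_\infty$ to the standard signature of permutations, as recalled at the start of Section 2.
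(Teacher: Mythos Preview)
Your proof is correct and follows essentially the same approach as the paper, which merely states that (1) is clear from definitions and that (2) ``follows by suitably manipulating the corresponding relations'' with a reference to \cite{ba4}. You have in fact supplied more detail than the paper does: the explicit reindexing arguments for composition, inversion, and concatenation are exactly the manipulations the paper alludes to, and your invocation of the multiplicativity of the signature on $S_\infty$ is the right mechanism for the twisted case.

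One small caveat worth noting: the examples immediately following Definition~4.1 (and the later generalization in Definition~4.6) indicate that in $\bar{\mathcal R}_\sigma$ the sign is really $\varepsilon\!\left(\ker\binom{i}{i\circ\sigma}\right)$, i.e.\ it depends on the index pattern, not just on $\sigma$. Under that reading your sign bookkeeping still goes through, but the needed identity is the categorical multiplicativity of $\varepsilon$ on $P_{even}$ (that $\varepsilon$ respects vertical and horizontal concatenation of partitions), rather than only the homomorphism property on $S_\infty$. This is established in \cite{ba3} and is implicitly what the paper relies on as well.
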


\begin{proof}
Here the first assertion is clear from definitions, and the second assertion follows by suitably manipulating the corresponding relations. See \cite{ba4}.
\end{proof}

Among the main results in \cite{ba4} was the fact that the standard parametrization of the 9 main spheres involves only 3 permutation groups, namely $\{1\}\subset S_\infty^*\subset S_\infty$:

\begin{proposition}
The standard parametrization of the $9$ main spheres is
$$\xymatrix@R=10mm@C=10mm{
S_\infty\ar@{.}[d]&S_\infty^*\ar@{.}[d]&\{1\}\ar@{.}[d]&G/H\\
S^{N-1}_\mathbb R\ar[r]&S^{N-1}_{\mathbb R,*}\ar[r]&S^{N-1}_{\mathbb R,+}&\{1\}\ar@{.}[l]\\
S^{N-1,1}_\mathbb R\ar[r]\ar[u]&S^{N-1,1}_{\mathbb R,*}\ar[r]\ar[u]&\bar{S}^{N-1}_{\mathbb R,*}\ar[u]&S_\infty^*\ar@{.}[l]\\
S^{N-1,0}_\mathbb R\ar[r]\ar[u]&\bar{S}^{N-1,1}_\mathbb R\ar[r]\ar[u]&\bar{S}^{N-1}_\mathbb R\ar[u]&S_\infty\ar@{.}[l]}$$
where  $S_\infty^*=S_\infty\cap P_{even}^*$.
\end{proposition}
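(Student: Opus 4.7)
The plan is to handle the nine spheres in two groups --- the five ``border'' spheres (top row plus right column) and the four interior intersections --- then address maximality uniformly, and finally verify the auxiliary identity $S_\infty^* = S_\infty \cap P_{even}^*$.

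For the top row, the spheres carry no twisted defining relations, hence $H = \{1\}$. For the $G$-component: $S^{N-1}_{\mathbb R,+}$ trivially takes $G = \{1\}$; $S^{N-1}_\mathbb R$ is defined by the basic crossing relation $\mathcal R_{(21)}$, whose closure under composition and concatenation is $S_\infty$; and $S^{N-1}_{\mathbb R,*}$ is defined by $\mathcal R_{(321)}$, with closure $S_\infty^*$. By the obvious symmetry between twisted and untwisted, the right column is handled identically with $G$ and $H$ swapped. For the four interior spheres, Definition 4.2(3) expresses each as an intersection $S^{N-1}_{\mathbb R,G} \cap \bar{S}^{N-1}_{\mathbb R,H}$, where $S^{N-1}_{\mathbb R,G}$ is the top-row sphere in the same column and $\bar{S}^{N-1}_{\mathbb R,H}$ is the right-column sphere in the same row; the pair $(G,H)$ is then read off directly from the borders of the diagram.

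At this stage one has verified that the displayed $(G,H)$ is \emph{a} parametrization of each sphere; the remaining task is maximality, namely $\widetilde G = G$ and $\widetilde H = H$. This follows the strategy of \cite{ba4}. By Proposition 4.4, $\widetilde G$ and $\widetilde H$ are subgroups of $S_\infty$ closed under concatenation, so it suffices to bound each of them by examining the associated untwisted and twisted linear quotient spheres; combined with the classification of such subgroups arising in the present setting, and the fact that $\{1\} \subset S_\infty^* \subset S_\infty$ are mutually distinct, this pins down $(\widetilde G, \widetilde H) = (G, H)$ in all nine cases. The identification $S_\infty^* = S_\infty \cap P_{even}^*$ is then a short combinatorial check: associating to $\sigma \in S_k$ the pair partition $\pi_\sigma \in P_2(k,k)$ that joins top-leg $i$ to bottom-leg $\sigma(i)$, one verifies that under the alternating $\circ\bullet\circ\bullet\ldots$ labelling from section~1 each block of $\pi_\sigma$ has equally many black and white legs iff $\sigma \in S_\infty^*$.

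The principal obstacle is the maximality step for the four interior spheres, where two kinds of relations interact and the risk of an unexpected collapse to a smaller sphere is greatest. The remedy, due to \cite{ba4}, is to decouple: any putative additional untwisted relation $\mathcal R_\sigma$ valid on an interior sphere already holds on its untwisted linear quotient and is therefore controlled by the border analysis; symmetrically for twisted relations. This reduces the interior maximality to the border case, which in turn is settled by the generators-and-closure argument sketched above.
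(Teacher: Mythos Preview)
Your outline for the parametrization half (showing each sphere equals $S^{N-1}_{\mathbb R,G,H}$ for the displayed $(G,H)$) is sound, but the maximality argument contains a genuine error of direction.

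You claim that an untwisted relation $\mathcal R_\sigma$ valid on an interior sphere ``already holds on its untwisted linear quotient'', by which you evidently mean the top-row sphere in the same column. This is false: the arrows in the diagram are inclusions, so each interior sphere is \emph{contained in} the top-row sphere of its column, $C(\text{interior})$ is a quotient of $C(\text{top})$, and relations propagate downward, not upward. Thus $\widetilde G$ is \emph{antitone} in the sphere: smaller spheres carry more relations. Concretely, on $S^{N-1,0}_\mathbb R=S^{N-1}_\mathbb R\cap\bar S^{N-1}_\mathbb R$ one has $x_ix_j=0$ for all $i\neq j$, so every $\mathcal R_\sigma$ and every $\bar{\mathcal R}_\sigma$ holds there; this certainly does not transfer up to $S^{N-1}_\mathbb R$. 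The correct reduction --- the one carried out in \cite{ba4} and reproduced in the proof of Theorem 4.11(II) of the present paper --- goes the other way: since $\widetilde G$ is largest on the bottom row and $\widetilde H$ is largest on the left column, it suffices to compute $\widetilde G$ for the three bottom spheres and $\widetilde H$ for the three left spheres. Those computations are done individually (not by a structural reduction), using concrete non-degeneracy facts about the polygonal spheres and the signature characterisations that later appear as Lemma 2.5.

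Two smaller points. Your reference to ``Proposition 4.4'' for the group-and-concatenation structure of $\widetilde G,\widetilde H$ should be Proposition 4.3; Proposition 4.4 is the statement you are proving. And your appeal to a ``classification of such subgroups'' is not available: the paper explicitly remarks after Proposition 4.4 that the restriction of parametrizing groups to $\{1\}\subset S_\infty^*\subset S_\infty$ is only conjectural in general, so maximality cannot be read off from any classification but must be verified directly. Finally, $S_\infty^*=S_\infty\cap P_{even}^*$ is the \emph{definition} given in the proposition, not an identity requiring proof.
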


\begin{proof}
We refer to \cite{ba4} for the proof of this result, and for more information about $S_\infty^*$, with the remark that we will improve this result in Theorem 4.11 below.
\end{proof}

As explained in \cite{ba4}, the above result, and a number of further considerations regarding the subgroups $G\subset S_\infty$, suggest that, conjecturally, the 3 main examples of linear spheres are the only ones, the 3 main examples of twisted linear spheres are the only ones, and the 9 main examples of mixed linear spheres are the only ones. See \cite{ba4}.

Our purpose now will be that of extending the linear sphere formalism, by using more general partitions $\pi\in P_{even}$ instead of permutations $\sigma\in S_\infty$. We use:

\begin{definition}
We denote by $P_{vert}\subset P_{even}$ the set of partitions having the property that each block has the same number of upper and lower legs. 
\end{definition}

Observe that we have $S_\infty\subset P_{vert}$, and in fact $S_\infty=P_{vert}\cap P_2$. Observe also that, when switching between consecutive neighbors, as required for the computation of the signature, the partitions $\pi\in P_{vert}$ can be put in a very simple form, as follows:
$$\xymatrix@R=2mm@C=3mm{
\circ\ar@/_/@{-}[dr]&&\circ&&\circ\ar@{.}[ddddllll]&\circ\ar@/_/@{-}[dr]&\circ\ar@{-}[d]&\circ\\
&\ar@/_/@{-}[ur]\ar@{-}[ddrr]&&&&&\ar@/_/@{-}[ur]\ar@{-}[dd]\\
\\
&&&\ar@/^/@{-}[dr]&&&\ar@/^/@{-}[dr]\ar@{-}[d]\\
\circ&&\circ\ar@/^/@{-}[ur]&&\circ&\circ\ar@/^/@{-}[ur]&\circ&\circ}
\qquad
\xymatrix@R=2mm@C=3mm{\\ \\ \to\\ \\}\ \ 
\qquad
\xymatrix@R=2mm@C=3mm{
\circ\ar@/_/@{-}[dr]&&\circ&\circ\ar@{-}[dddd]&\circ\ar@/_/@{-}[dr]&\circ\ar@{-}[d]&\circ\\
&\ar@/_/@{-}[ur]\ar@{-}[dd]&&&&\ar@/_/@{-}[ur]\ar@{-}[dd]\\
\\
&\ar@/^/@{-}[dr]&&&&\ar@/^/@{-}[dr]\ar@{-}[d]\\
\circ\ar@/^/@{-}[ur]&&\circ&\circ&\circ\ar@/^/@{-}[ur]&\circ&\circ}$$

We have in fact already met $P_{vert}$, in the proof of Proposition 3.3 above. Indeed, what we proved there is that any group $\mathbb Z_2^{*N}\to\Gamma\to\mathbb Z_2^N$ can be presented with relations of type $g_{i_1}\ldots g_{i_k}=g_{j_1}\ldots g_{j_l}$, with $\ker(^i_j)\in P_{vert}$. We will be back later on to this fact.

We can generalize the construction in Definition 4.1, as follows:

\begin{definition}
Associated to any $\pi\in P_{vert}$ are the sets of relations
$$\mathcal R_\pi=\left\{x_{i_1}\ldots x_{i_k}=x_{j_1}\ldots x_{j_k}\Big|\forall i,j,\ker(^i_j)\leq\pi\right\}$$
$$\bar{\mathcal R}_\pi=\left\{x_{i_1}\ldots x_{i_k}=\varepsilon\left(\ker(^i_j)\right)x_{j_1}\ldots x_{j_k}\Big|\forall i,j,\ker(^i_j)\leq\pi\right\}$$
which can be imposed to noncommutative variables $x_1,\ldots,x_N$.
\end{definition}

Observe that for $\pi\in S_\infty\subset P_{vert}$ we obtain indeed the relations in Definition 4.1 above. At the level of new examples, consider the following partitions:
$$\eta=\ker\begin{pmatrix}a&a&b\\ b&a&a\end{pmatrix}\quad,\quad
\nu=\ker\begin{pmatrix}a&b&a\\ a&b&a\end{pmatrix}\quad,\quad
\rho=\ker\begin{pmatrix}a&a&b\\ a&b&a\end{pmatrix}$$

Here $\eta$ is the pair-positioner partition, that we already met in Definition 1.4 above, and $\nu$ is a partition obtained by rotating it. These partitions are both even, and we have:
\begin{eqnarray*}
\mathcal R_\eta=\bar{\mathcal R}_\eta&=&\{x_i^2x_j=x_jx_i^2|\forall i,j\}\\
\mathcal R_\nu=\bar{\mathcal R}_\nu&=&\{x_ix_jx_i=x_ix_jx_i|\forall i,j\}
\end{eqnarray*}

Observe that, while $<\eta>=<\nu>$ by rotation, the above relations are of very different nature, with those for $\nu$ being trivial. This is in sharp contrast with the quantum group calculus developed in \cite{bsp}. Finally, for the above partition $\rho$, we have:
\begin{eqnarray*}
\mathcal R_\rho&=&\{x_i^2x_j=x_ix_jx_i|\forall i,j\}\\
\bar{\mathcal R}_\rho&=&\{x_i^2x_j=-x_ix_jx_i|\forall i\neq j\}
\end{eqnarray*}

Now back to the general case, with Definition 4.6 in hand, we can generalize in a straightforward way the constructions in Definition 4.2, as follows: 

\begin{definition}
We have the following spheres $X\subset S^{N-1}_{\mathbb R,+}$:
\begin{enumerate}
\item Monomial spheres: $S^{N-1}_{\mathbb R,E}$ with $E\subset P_{vert}$, defined via $\{\mathcal R_\pi|\pi\in E\}$.

\item Twisted monomial spheres: $\bar{S}^{N-1}_{\mathbb R,F}$ with $F\subset P_{vert}$, defined via $\{\bar{\mathcal R}_\pi|\pi\in F\}$.

\item Mixed monomial spheres: $S^{N-1}_{\mathbb R,E,F}=S^{N-1}_{\mathbb R,E}\cap\bar{S}^{N-1}_{\mathbb R,F}$, with $E,F\subset P_{vert}$.
\end{enumerate}
\end{definition}

At the classification level, we recall from \cite{ba4} that, conjecturally, the 3 main examples of linear spheres are the only ones, the 3 main examples of twisted linear spheres are the only ones, and the 9 main examples of mixed linear spheres are the only ones. In the monomial setting the situation is much more complicated, and we have no conjectural answer yet. We have for instance a big class of examples, constructed as follows:

\begin{definition}
Given a category of partitions $NC_{even}\subset C\subset P_{even}$, we construct the set $E_C=C\cap P_{vert}$, and then we associate:
\begin{enumerate}
\item To any $C$: the monomial sphere $S^{N-1}_C=S^{N-1}_{\mathbb R,E_C}$.

\item To any $D$: the twisted monomial sphere $\bar{S}^{N-1}_D=\bar{S}^{N-1}_{\mathbb R,E_D}$.

\item To any $C,D$: the mixed monomial sphere $S^{N-1}_{C,D}=S^{N-1}_{\mathbb R,E_C,E_D}$.
\end{enumerate}
\end{definition}

Observe the similarity with the concept of standard parametrization, from Proposition 4.3 above. Our purpose in what follows will be to clarify this similarity. 

Before doing so, however, let us discuss the main new example of monomial sphere appearing via Definition 4.8. This new sphere comes from $P_{vert}^{[\infty]}$, as follows:

\begin{proposition}
The monomial sphere $S^{N-1}_{\mathbb R,\infty}$ associated to $P_{vert}^{[\infty]}$ appears as: 
$$C(S^{N-1}_{\mathbb R,\infty})=C(S^{N-1}_{\mathbb R,+})/<x_i^2x_j=x_jx_i^2,\forall i,j>$$
Moreover, this sphere contains the half-liberated sphere $S^{N-1}_{\mathbb R,*}$.
\end{proposition}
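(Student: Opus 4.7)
The plan is to verify the two assertions separately. For the first identification, one direction is immediate: since $\eta\in P_{vert}^{[\infty]}$ (its two blocks have $2+2$ and $1+1$ upper-lower legs), the relation $\mathcal R_\eta$ computed after Definition 4.6 is literally $\{x_i^2x_j=x_jx_i^2\}$, so the right-hand quotient surjects onto $C(S^{N-1}_{\mathbb R,\infty})$.

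For the converse, I would set up a ``pull-out'' normal form in the algebra $A=C(S^{N-1}_{\mathbb R,+})/\langle x_i^2x_j=x_jx_i^2\rangle$. Since each $x_a^2$ is central in $A$, whenever a monomial $x_{i_1}\cdots x_{i_k}$ contains adjacent duplicates $x_ax_a$, one can extract the resulting $x_a^2$ and continue on the shortened word; iterating until no adjacent duplicates remain yields an identity
$$x_{i_1}\cdots x_{i_k}=M\cdot R\quad\text{in }A,$$
where $R=x_{r_1}\cdots x_{r_m}$ is the (unique) reduced form of the group element $g_{i_1}\cdots g_{i_k}\in\mathbb Z_2^{*N}$, and $M$ is a product of central squares $x_a^2$ whose combined multiset of letters equals the multiset of letters of the original word minus that of $R$. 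Because the factors $x_a^2$ commute with each other, $M$ is determined in $A$ by this multiset alone.

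With this normal form in hand, given $\pi\in P_{vert}^{[\infty]}$ and a compatible labeling producing upper word $U$ and lower word $L$, Lemma 2.5 tells us that $U$ and $L$ represent the same element of $\mathbb Z_2^{*N}$, so $R_U=R_L$. The condition $\pi\in P_{vert}$ forces every block of $\pi$ to contribute equally many legs to the upper and lower rows, so $U$ and $L$ have the same multiset of letters, and hence $M_U=M_L$. Therefore $U=L$ in $A$, giving all the defining relations of $S^{N-1}_{\mathbb R,\infty}$ and completing the identification.

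Finally, for the inclusion $S^{N-1}_{\mathbb R,*}\subset S^{N-1}_{\mathbb R,\infty}$, specializing $x_ix_jx_k=x_kx_jx_i$ at $i=j$ yields $x_i^2x_k=x_kx_i^2$, which is the defining relation of $S^{N-1}_{\mathbb R,\infty}$. I expect the main conceptual step to be the pull-out normal form, specifically the fact that the element $M\cdot R$ is well-defined in $A$ regardless of the order in which duplicates are extracted; once this is in place, the matching of $M$-parts via the $P_{vert}$ multiset condition and Lemma 2.5 is essentially bookkeeping.
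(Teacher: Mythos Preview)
Your proof is correct and follows the same strategy as the paper's: both use the description of $P_{even}^{[\infty]}$ in terms of words in $\mathbb Z_2^{*N}$ (stated in section~1 and recalled in the proof of Lemma~2.5) to see that the relations $\mathcal R_\pi$ for $\pi\in P_{vert}^{[\infty]}$ are exactly those forced by centrality of the $x_i^2$. Your pull-out normal form simply makes explicit what the paper compresses into the single sentence ``the partitions in $P_{vert}^{[\infty]}$ implement the relations $x_i^2=$ central, between free variables $x_1,\ldots,x_N$''.
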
 

\begin{proof}
Observe first that the pair-positioner partition $\eta\in P_{vert}^{[\infty]}$ produces the relations $[a^2,b]=0$. In order to prove that $S^{N-1}_{\mathbb R,\infty}$ is indeed presented by these relations, we will need a convenient description of $P_{vert}^{[\infty]}$. We recall from section 1 that we have:
$$P_{even}^{[\infty]}(k,l)=\left\{\ker\begin{pmatrix}i_1&\ldots&i_k\\ j_1&\ldots&j_l\end{pmatrix}\Big|g_{i_1}\ldots g_{i_k}=g_{j_1}\ldots g_{j_l}\ {\rm inside}\ \mathbb Z_2^{*N}\right\}$$

In other words, the partitions in $P_{even}^{[\infty]}$ implement the relations $g_i^2=1$, between free variables $g_1,\ldots,g_N$. It follows that the partitions in $P_{vert}^{[\infty]}$ implement the relations $x_i^2=$ central, between free variables $x_1,\ldots,x_N$, and this gives the result.
\end{proof}

Now back to the parametrization question, observe that $P_{vert}\subset P_{even}$ is closed under the standard categorical operations $\circ,\otimes,*$ from \cite{bsp}, which are respectively the vertical and horizontal concatenation, and the upside-down turning. We can formulate:

\begin{proposition}
Let $S=S^{N-1}_{\mathbb R,E,F}$ be a mixed monomial sphere, and consider the subsets $\widetilde{E},\widetilde{F}\subset P_{vert}$ consisting of partitions $\pi,\sigma$ such that $\mathcal R_\pi,\bar{\mathcal R}_\sigma$ hold over $S$.
\begin{enumerate}
\item $S=S^{N-1}_{\mathbb R,\widetilde{E},\widetilde{F}}$, and $(\widetilde{E},\widetilde{F})$ is maximal with this property.

\item $\widetilde{E},\widetilde{F}\subset P_{vert}$ are both closed under the categorical operations $\circ,\otimes,*$.
\end{enumerate}
We call the writing $S=S^{N-1}_{\mathbb R,E,F}$ with $E,F$ maximal ``standard parametrization'' of $S$.
\end{proposition}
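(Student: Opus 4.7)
The plan is to follow the structure of Proposition 4.3, upgrading from permutations $\sigma\in S_\infty$ to general partitions $\pi\in P_{vert}$. Part (1) is essentially formal: by definition $E\subset\widetilde E$ and $F\subset\widetilde F$, so $S^{N-1}_{\mathbb R,\widetilde E,\widetilde F}\subset S$; conversely every relation indexed by $\widetilde E\cup\widetilde F$ already holds on $S$ by construction, which gives the reverse inclusion. Maximality of $(\widetilde E,\widetilde F)$ is immediate from the definition.

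For part (2), closure under the upside-down turning $*$ is immediate, since the set of equalities $\{x_{i_1}\cdots x_{i_k}=x_{j_1}\cdots x_{j_k}\mid\ker(^i_j)\leq\pi\}$ is symmetric in $i,j$ and $\varepsilon(\ker(^i_j))=\varepsilon(\ker(^j_i))$. Closure under horizontal concatenation $\otimes$ is also easy: if $\ker(^{ip}_{jq})\leq\pi\otimes\sigma$ then no block of the kernel connects the two halves, so $\ker(^i_j)\leq\pi$ and $\ker(^p_q)\leq\sigma$; the two resulting identities multiply, and in the twisted case one uses $\varepsilon(\ker(^{ip}_{jq}))=\varepsilon(\ker(^i_j))\,\varepsilon(\ker(^p_q))$, which follows from the additivity of crossing counts on disjoint horizontal halves.

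The main case is closure under vertical composition $\circ$. First one checks that $\sigma\circ\pi\in P_{vert}$: each block $D$ of $\sigma\circ\pi$ is a connected component built from some blocks of $\pi$ and $\sigma$; the number of middle legs of $D$ contributed from the $\pi$-side equals the number from the $\sigma$-side, so together with the verticality of $\pi$ and $\sigma$ separately this forces the external upper and lower counts of $D$ to agree. Now given $\pi,\sigma\in\widetilde E$ and $i,m$ with $\ker(^i_m)\leq\sigma\circ\pi$, the goal is to exhibit intermediate indices $j$ with $\ker(^i_j)\leq\pi$ and $\ker(^j_m)\leq\sigma$, for then $x_{i_1}\cdots x_{i_k}=x_{j_1}\cdots x_{j_k}=x_{m_1}\cdots x_{m_k}$ gives $\mathcal R_{\sigma\circ\pi}$. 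I construct $j$ as follows: each block $B$ of $\pi$ is contained in a unique block $D(B)$ of $\sigma\circ\pi$, obtained by merging $B$ with all the blocks of $\sigma$ to which it is connected through shared middle legs. Since $\sigma\circ\pi\in P_{vert}$, every block $D$ has positive numbers of both upper and lower external legs, so the common index value $v_D$ dictated by $\ker(^i_m)\leq\sigma\circ\pi$ is well defined; I assign the value $v_{D(B)}$ to every middle leg of $B$. Consistency across the $\sigma$-side holds because any two blocks sharing a middle leg lie in the same connected component, hence in the same $D(B)$. The verifications of $\ker(^i_j)\leq\pi$ and $\ker(^j_m)\leq\sigma$ then reduce to the fact that every leg of a block $B$ of $\pi$ (respectively $C$ of $\sigma$) receives the single value $v_{D(B)}$.

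The hard part is the twisted case of $\circ$, where one additionally needs the identity $\varepsilon(\ker(^i_j))\cdot\varepsilon(\ker(^j_m))=\varepsilon(\ker(^i_m))$. This is a multiplicativity statement for the signature under the composition realized by the intermediate assignment above, and is the main technical obstacle. I would prove it by tracking the switches required to put each of the three kernels into noncrossing form, showing that the crossings involving the middle $j$-legs enter in cancelling pairs modulo $2$ and leave only the external contribution; alternatively, one can deduce it from the categorical nature of the twisted maps $\bar T_\pi$ established via Woronowicz's Tannakian duality in Section 2, which encodes exactly this multiplicativity up to loops that vanish modulo $2$.
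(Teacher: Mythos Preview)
Your proof is correct and follows essentially the same approach as the paper's, which merely states that part (1) is clear and that part (2) follows ``as in the proof of Proposition 4.3, explained in detail in \cite{ba4}, by composing, concatenating, or returning the corresponding relations.'' You actually supply the details the paper omits---the explicit construction of the intermediate index $j$ for the composition $\circ$, the verification that $P_{vert}$ is stable under $\circ$ (hence that no closed loops arise), and the identification of the signature identity $\varepsilon(\ker(^i_j))\,\varepsilon(\ker(^j_m))=\varepsilon(\ker(^i_m))$ as the crux of the twisted case---so your argument is in fact more complete than the paper's own sketch.
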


\begin{proof}
Here the first assertion is clear, and the second assertion follows as in the proof of Proposition 4.3, explained in detail in \cite{ba4}, by composing, concatenating, or returning the corresponding relations. Indeed, these operations correspond to the categorical operations $\circ,\otimes,*$, and so both $\widetilde{E},\widetilde{F}$ follow to be closed under these latter operations.
\end{proof}

We agree from now on to call the concepts used in Proposition 4.3 and Proposition 4.4 above ``old standard parametrization''.  Let us extend now Proposition 4.4, by using our new notion of standard parametrization, and by replacing as well the free sphere $S^{N-1}_{\mathbb R,+}$ with the smaller sphere $S^{N-1}_{\mathbb R,\infty}$. We have here the following result:

\begin{theorem}
We have the following standard parametrization results
$$\xymatrix@R=10mm@C=10mm{
P_{vert}\ar@{.}[d]&P_{vert}^*\ar@{.}[d]&P_{vert}^{[\infty]}\ar@{.}[d]&E/F\\
S^{N-1}_\mathbb R\ar[r]&S^{N-1}_{\mathbb R,*}\ar[r]&S^{N-1}_{\mathbb R,\infty}&P_{vert}^{[\infty]}\ar@{.}[l]\\
S^{N-1,1}_\mathbb R\ar[r]\ar[u]&S^{N-1,1}_{\mathbb R,*}\ar[r]\ar[u]&\bar{S}^{N-1}_{\mathbb R,*}\ar[u]&P_{vert}^*\ar@{.}[l]\\
S^{N-1,0}_\mathbb R\ar[r]\ar[u]&\bar{S}^{N-1,1}_\mathbb R\ar[r]\ar[u]&\bar{S}^{N-1}_\mathbb R\ar[u]&P_{vert}\ar@{.}[l]}$$
where $P_{vert}^*=P_{vert}\cap P_{even}^*$ and $P_{vert}^{[\infty]}=P_{vert}\cap P_{even}^{[\infty]}$.
\end{theorem}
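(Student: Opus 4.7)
The plan is to verify the claimed parametrization cell by cell, establishing for each of the nine spheres both sufficiency ($E \subseteq \widetilde{E}$ and $F \subseteq \widetilde{F}$) and maximality ($\widetilde{E} \subseteq E$ and $\widetilde{F} \subseteq F$). This lifts Proposition 4.4 from permutations to vertical partitions, and relies centrally on Lemma 2.5.

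For sufficiency, first observe that all nine spheres are subspheres of $S^{N-1}_{\mathbb R,\infty}$: the anticommutation $x_ix_j = -x_jx_i$ defining the twisted examples forces $x_i^2$ to be central, and the half-commutation $abc = cba$ likewise implies $[x_i^2, x_j] = 0$. Hence by Proposition 4.10 the $\eta$-relation holds on every sphere, and one then shows $P_{vert}^{[\infty]} \subseteq \widetilde{E}$ universally using Lemma 2.5 and the closure of $P_{vert}^{[\infty]}$ under the categorical operations $\circ, \otimes, *$ (noted in Proposition 4.10), which reduces sufficiency to the $\eta$-generator. The remaining relations from $P_{vert}^*$ and $P_{vert}$ are checked at their respective spheres by reducing to the half-commutator and full-crossing generators. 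The twisted sufficiency is handled similarly, using in addition that Lemma 2.5 gives $\bar{\mathcal R}_\pi = \mathcal R_\pi$ whenever $\pi \in P_{vert}^{[\infty]}$.

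For maximality, three archetypal arguments cover the extremal cells, from which the mixed ones follow. On $S^{N-1}_{\mathbb R,*}$, the half-commutation implies that two monomials in the generators are equal iff their multisets of odd-positioned and even-positioned indices coincide. For a vertical partition $\pi$ with block counts $(a,b,c,d)$ for upper-odd, upper-even, lower-odd, and lower-even legs, the relation $\mathcal R_\pi$ then holds iff every block satisfies $a = c$ and $b = d$, which combined with the verticality constraint $a + b = c + d$ is equivalent to $a + d = b + c$, i.e. $\pi \in P_{vert}^*$. On $\bar{S}^{N-1}_\mathbb R$, the anticommutation together with centrality of $x_i^2$ yields $x_{i_1}\cdots x_{i_k} = \varepsilon(\ker(^i_j))\, x_{j_1}\cdots x_{j_k}$ for every $\ker(^i_j) \leq \pi \in P_{vert}$, so $\widetilde{F} = P_{vert}$ follows automatically, whereas the untwisted analog demands $\varepsilon(\tau) = 1$ for every achievable kernel $\tau \leq \pi$, which forces $\pi \in P_{vert}^{[\infty]}$ by Lemma 2.5. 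On $S^{N-1}_{\mathbb R,\infty}$, the description of $P_{vert}^{[\infty]}$ in terms of relations among free generators $g_1, \ldots, g_N$ subject to $g_i^2 = 1$ (from the proof of Proposition 4.10) shows that monomial equivalence on this sphere is governed exactly by $P_{vert}^{[\infty]}$, giving both $\widetilde{E}$ and $\widetilde{F}$ at this cell.

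The six mixed cells then follow by combining row and column arguments, since each mixed sphere is an intersection of two spheres whose parameter sets are already determined, and the maximal parameter sets behave well under intersections. The principal obstacle in executing this plan will be the $S^{N-1}_{\mathbb R,\infty}$-column, which is genuinely new and not covered by Proposition 4.4: pinning down monomial equivalence in this intermediate sphere — neither fully classical nor fully half-liberated — requires the careful $\mathbb Z_2^{*N}$-interpretation of $P_{vert}^{[\infty]}$ to distinguish valid from invalid relations, and it is this step that the proof must genuinely supply rather than inherit from the linear parametrization.
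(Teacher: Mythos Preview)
Your sufficiency argument is sound and close to the paper's, but the maximality argument has a genuine gap in how you handle the middle row and middle column.

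The functoriality runs the wrong way for your reduction. If $S_1\subset S_2$ then $\widetilde{E}_{S_2}\subset\widetilde{E}_{S_1}$: the smaller sphere satisfies more monomial relations, hence has the larger parameter set. Thus in each column the set $\widetilde{E}$ is \emph{largest} at the bottom sphere, and to prove $\widetilde{E}\subset E$ throughout the column you must bound $\widetilde{E}$ at the bottom. Your computation for column~2 is carried out at $S^{N-1}_{\mathbb R,*}$, the \emph{top} sphere; showing $\widetilde{E}_{S^{N-1}_{\mathbb R,*}}=P_{vert}^*$ there says nothing about $\widetilde{E}_{\bar{S}^{N-1,1}_\mathbb R}$, which could a~priori be strictly larger. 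The same issue arises for $\widetilde{F}$ in the middle row: you never touch $S^{N-1,1}_\mathbb R$, which is the left (smallest) sphere of that row and hence the one where $\widetilde{F}$ is largest. Your appeal to intersections does not rescue this: for $S=S_1\cap S_2$ one only has $\widetilde{E}_S\supset\widetilde{E}_{S_1}\cup\widetilde{E}_{S_2}$, a lower bound, whereas maximality requires an upper bound.

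The paper avoids this by reducing maximality to exactly the bottom row (for $E$) and the left column (for $F$), and then computing directly on $\bar{S}^{N-1,1}_\mathbb R$ and $S^{N-1,1}_\mathbb R$. The key point you are missing is that on these ``polygonal'' spheres the relations $abc=0$ for $a,b,c$ distinct kill all contributions with $|\ker i|\geq 3$, so the condition $\mathcal R_\pi$ (resp.\ $\bar{\mathcal R}_\pi$) reduces to a sign condition on the two-block subpartitions $\tau\leq\pi$, and Lemma~2.5's second formula identifies this with $\pi\in P_{vert}^*$. Your half-commutation multiset argument on $S^{N-1}_{\mathbb R,*}$, while correct for that sphere, does not transfer to $\bar{S}^{N-1,1}_\mathbb R$, where the coordinates anticommute rather than half-commute.
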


\begin{proof}
The idea will be that of exploiting as much as possible Proposition 4.4, and then enhancing some of the arguments in the proof of Proposition 4.4, worked out in \cite{ba4}.

(I) First, we must prove that we have $S=S^{N-1}_{\mathbb R,E,F}$, for all the spheres in the statement. We will do this in two steps, first by converting the parametrization in Proposition 4.4 into a partition-theoretical statement, and then replacing $S^{N-1}_{\mathbb R,+}\to S^{N-1}_{\mathbb R,\infty}$.

In order to perform the first step, the idea is that of replacing in Proposition 4.4 the groups $S_\infty^\times=S_\infty,S_\infty^*,\{1\}$ by the sets $P_{vert}^\times=P_{vert},P_{vert}^*,NC_{vert}$. Our first claim is that these groups and sets are related by the following formulae:
\begin{eqnarray*}
S_\infty^\times&=&P_{vert}^\times\cap S_\infty\\
P_{vert}^\times&=&\left\{\pi\in P_{vert}\Big|\pi\leq\rho\ {\rm for\ some\ }\rho\in S_\infty^\times\right\}
\end{eqnarray*}

This is indeed clear in the classical case, clear as well in the free case, and in the half-liberated case this follows from the observation that, when labelling the legs counterclockwise $\circ\bullet\circ\bullet\ldots$, merging blocks will preserve the equality of black/white legs.

Now with these connecting formulae in hand, we deduce that we have the following equality, for any of the $3\times 3=9$ choices of the symbols $\times,\diamond$:
$$S^{N-1}_{\mathbb R,S_\infty^\times,S_\infty^\diamond}=S^{N-1}_{\mathbb R,P_{vert}^\times,P_{vert}^\diamond}$$

Indeed, the inclusion ``$\subset$'' comes from the first connecting formula, and the inclusion ``$\supset$'' comes from the second connecting formula. But, from this equality we conclude that the parametrization result in Proposition 4.4 can be reformulated as follows, with of course the parametrizing sets $E,F$ not claimed to be maximal:
$$\xymatrix@R=10mm@C=10mm{
P_{vert}\ar@{.}[d]&P_{vert}^*\ar@{.}[d]&NC_{vert}\ar@{.}[d]&E/F\\
S^{N-1}_\mathbb R\ar[r]&S^{N-1}_{\mathbb R,*}\ar[r]&S^{N-1}_{\mathbb R,+}&NC_{vert}\ar@{.}[l]\\
S^{N-1,1}_\mathbb R\ar[r]\ar[u]&S^{N-1,1}_{\mathbb R,*}\ar[r]\ar[u]&\bar{S}^{N-1}_{\mathbb R,*}\ar[u]&P_{vert}^*\ar@{.}[l]\\
S^{N-1,0}_\mathbb R\ar[r]\ar[u]&\bar{S}^{N-1,1}_\mathbb R\ar[r]\ar[u]&\bar{S}^{N-1}_\mathbb R\ar[u]&P_{vert}\ar@{.}[l]}$$

Let us insert now into this diagram the sphere left, $S^{N-1}_{\mathbb R,\infty}$. We know by definition that we have $S^{N-1}_{\mathbb R,\infty}=S^{N-1}_{P_{vert}^{[\infty]}}$, and our first claim is that we have in fact:
$$S^{N-1}_{\mathbb R,\infty}=S^{N-1}_{P_{vert}^{[\infty]},P_{vert}^{[\infty]}}$$

In order to prove this formula, it is enough to show that the relations $\bar{\mathcal R}_\pi$ in Definition 4.6 are satisfied over $S^{N-1}_{\mathbb R,\infty}$, for any $\pi\in P_{vert}^{[\infty]}$. But, according to the description $P_{even}^{[\infty]}$ found in Lemma 2.5 above, we obtain, by intersecting with $P_{vert}$:
$$P_{vert}^{[\infty]}=\left\{\pi\in P_{vert}\Big|\varepsilon(\tau)=1,\forall\tau\leq\pi\right\}$$

Now since the difference between the relations $\mathcal R_\pi,\bar{\mathcal R}_\pi$ in Definition 4.6 comes precisely from the possible odd subpartitions $\tau\leq\pi$, we conclude that for $\pi\in P_{vert}^{[\infty]}$ we have $\mathcal R_\pi=\bar{\mathcal R}_\pi$, and so these relations are indeed satisfied over $S^{N-1}_{\mathbb R,\infty}$.

Thus our claim is proved, and $S^{N-1}_{\mathbb R,\infty}$ can be therefore inserted into the above diagram, at the place of $S^{N-1}_{\mathbb R,+}$, with parametrizing sets $E/F=P_{vert}^{[\infty]}/P_{vert}^{[\infty]}$. Now since the standard parametrization operation in Proposition 4.10 is functorial, we can change as well $NC_{vert}\to P_{vert}^{[\infty]}$ for the parametrizing sets of the $2+2$ ``smaller'' spheres, sitting below or at left of $S^{N-1}_{\mathbb R,\infty}$, and we obtain in this way the diagram in the statement.

(II) We must prove now that the parametrization $S=S^{N-1}_{\mathbb R,E,F}$ in the statement is standard, for all the 9 spheres. We already know from Proposition 4.4 above that the intersections $G=E\cap S_\infty,H=F\cap S_\infty$ are the correct ones, and in order to extend this result, best is to fine-tune the proof of Proposition 4.4, done in detail in \cite{ba4}. We must compute the following sets, and show that we get the sets in the statement:
\begin{eqnarray*}
E&=&\left\{\pi\in P_{vert}\Big|{\rm the\ relations\ }\mathcal R_\pi\ {\rm hold\ over\ }S\right\}\\
F&=&\left\{\pi\in P_{vert}\Big|{\rm the\ relations\ }\bar{\mathcal R}_\pi\ {\rm hold\ over\ }S\right\}
\end{eqnarray*} 

As a first observation, by using the various inclusions between spheres, we just have to compute $E$ for the spheres on the bottom, and $F$ for the spheres on the left:
\begin{eqnarray*}
S=S^{N-1,0}_\mathbb R,\bar{S}^{N-1,1}_\mathbb R,\bar{S}^{N-1}_\mathbb R&\implies&E=P_{vert},P_{vert}^*,P_{vert}^{[\infty]}\\
S=S^{N-1,0}_\mathbb R,S^{N-1,1}_\mathbb R,S^{N-1}_\mathbb R&\implies&F=P_{vert},P_{vert}^*,P_{vert}^{[\infty]}
\end{eqnarray*}

The results for $S^{N-1,0}_\mathbb R$ being clear, we are left with computing the remaining 4 sets, for the spheres $S^{N-1}_\mathbb R,\bar{S}^{N-1}_\mathbb R,S^{N-1,1}_\mathbb R,\bar{S}^{N-1,1}_\mathbb R$. The proof here goes as follows:

(1) $S^{N-1}_\mathbb R$. According to the definition of $F$, we have:
\begin{eqnarray*}
F(k)
&=&\left\{\pi\in P_{vert}(k)\Big|x_{i_1}\ldots x_{i_k}=\varepsilon\left(\ker(^i_j)\right)x_{j_1}\ldots x_{j_k},\forall\ker(^i_j)\leq\pi\right\}\\
&=&\left\{\pi\in P_{vert}(k)\Big|\varepsilon\left(\ker(^i_j)\right)=1,\forall\ker(^i_j)\leq\pi\right\}
\end{eqnarray*}

Now since by Lemma 2.5 for any $\pi\in P_{vert}(k)-P_{vert}^{[\infty]}(k)$ we can find a partition $\tau\leq\pi$ satisfying $\varepsilon(\tau)=-1$, we deduce that we have $F=P_{vert}^{[\infty]}$, as desired.

(2) $\bar{S}^{N-1}_\mathbb R$. The proof of $E=P_{vert}^{[\infty]}$ here is similar to the proof of $F=P_{vert}^{[\infty]}$ in (1) above, by using the same combinatorial ingredient at the end.

(3) $S^{N-1,1}_\mathbb R$. By definition of $F$, a partition $\pi\in P_{vert}(k)$ belongs to $F(k)$ when the following condition is satisfied, for any choice of the indices satisfying $\ker(^i_j)\leq\pi$:
$$x_{i_1}\ldots x_{i_k}=\varepsilon\left(\ker(^i_j)\right)x_{j_1}\ldots x_{j_k}$$

When $|\ker i|=1$ this formula reads $x_r^k=x_r^k$, which is true. When $|\ker i|\geq3$ this formula is automatically satisfied as well, because by using the relations $ab=ba$, and $abc=0$ for $a,b,c$ distinct, which both hold over $S^{N-1,1}_\mathbb R$, this formula reduces to $0=0$. Thus, we are left with studying the case $|\ker i|=2$. Here the quantities on the left $x_{i_1}\ldots x_{i_k}$ will not vanish, so the sign on the right must be 1, and we therefore have:
$$F(k)=\left\{\pi\in P_{vert}(k)\Big|\varepsilon(\tau)=1,\forall\tau\leq\pi,|\tau|=2\right\}$$

By using now Lemma 2.5 we conclude that we have $F=P_{vert}^*$, as desired.

(4) $\bar{S}^{N-1,1}_\mathbb R$. The proof of $E=P_{vert}^*$ here is similar to the proof of $F=P_{vert}^*$ in (3) above, by using the same combinatorial ingredient at the end.
\end{proof}

As an application of Theorem 4.11, let us go back to Definition 4.8, and try to find out what the main examples of such spheres are, in the untwisted case. In view of the bijection between easy quantum groups and categories of partitions, we can take as data here the basic quantum groups in Proposition 1.3 and Definition 1.4, namely:
$$\xymatrix@R=15mm@C=15mm{
O_N\ar[r]&O_N^*\ar[rr]&&O_N^+\\
H_N\ar[r]\ar[u]&H_N^*\ar[r]\ar[u]&H_N^{[\infty]}\ar[r]&H_N^+\ar[u]}$$

We can compute the associated spheres by using Theorem 4.11, and we get:

\begin{proposition}
The spheres associated to the basic quantum groups are:
$$\xymatrix@R=15mm@C=15mm{
S^{N-1}_\mathbb R\ar[r]&S^{N-1}_{\mathbb R,*}\ar[rr]&&S^{N-1}_{\mathbb R,+}\\
S^{N-1}_\mathbb R\ar[r]\ar@{=}[u]&S^{N-1}_{\mathbb R,*}\ar[r]\ar@{=}[u]&S^{N-1}_{\mathbb R,\infty}\ar[r]&S^{N-1}_{\mathbb R,+}\ar@{=}[u]}$$
In particular, for any $H_N\subset G\subset O_N^+$ easy we have $S^{N-1}_G=S^{N-1}_{G'}$, with $G'=G\cap H_N^+$.
\end{proposition}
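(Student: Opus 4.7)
The plan is to read off, for each of the seven quantum groups in the diagram, its category of partitions $C$ from Proposition 1.3 and Definition 1.4, compute the parametrizing set $E_C = C \cap P_{vert}$ appearing in Definition 4.9, and identify the resulting sphere by matching $E_C$ against the standard parametrization lists of Proposition 4.4 and Theorem 4.11. All the genuinely combinatorial content has been placed in those two statements, so what remains is essentially bookkeeping.

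For the top row, $O_N, O_N^*, O_N^+$ correspond to the categories $P_2, P_2^*, NC_2$. A pair partition lying in $P_{vert}$ must pair a single upper leg with a single lower leg, hence is a permutation, so
\[
P_2 \cap P_{vert} = S_\infty, \qquad P_2^* \cap P_{vert} = S_\infty^*, \qquad NC_2 \cap P_{vert} = \{1\},
\]
with the last equality holding because a noncrossing pairing of $k$ upper and $k$ lower legs by vertical strands is forced to be the column-by-column identity. Proposition 4.4 then identifies the associated spheres as $S^{N-1}_\mathbb R, S^{N-1}_{\mathbb R,*}, S^{N-1}_{\mathbb R,+}$.

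For the bottom row, $H_N, H_N^*, H_N^{[\infty]}, H_N^+$ correspond to $P_{even}, P_{even}^*, P_{even}^{[\infty]}, NC_{even}$. Directly from the definitions of $P_{vert}^*$ and $P_{vert}^{[\infty]}$ used in Theorem 4.11, intersection with $P_{vert}$ produces $P_{vert}, P_{vert}^*, P_{vert}^{[\infty]}, NC_{vert}$. The first three yield $S^{N-1}_\mathbb R, S^{N-1}_{\mathbb R,*}, S^{N-1}_{\mathbb R,\infty}$ by Theorem 4.11; the fourth yields $S^{N-1}_{\mathbb R,+}$, as observed in the intermediate diagram appearing in the proof of Theorem 4.11, and also directly since the relations $\mathcal R_\pi$ for noncrossing vertical $\pi$ reduce to tautologies.

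For the final assertion, Theorem 1.9 partitions the easy quantum groups $H_N \subset G \subset O_N^+$ into those contained in $H_N^+$, where $G' = G$ and nothing is to be proved, and the three exceptional cases $G \in \{O_N, O_N^*, O_N^+\}$. For these, the intersection diagram of Theorem 2.9, together with the trivial identity $O_N^+ \cap H_N^+ = H_N^+$, gives $G \cap H_N^+ = H_N, H_N^*, H_N^+$ respectively, so the claim reduces to the three vertical identifications in the diagram above. The only nontrivial one is the middle column, which expresses the fact that the $S_\infty^*$-parametrization coming from $O_N^*$ and the $P_{vert}^*$-parametrization coming from $H_N^*$ both produce the half-commutative sphere $S^{N-1}_{\mathbb R,*}$; this is where the substantive work lies, and it has already been carried out in Proposition 4.4 and Theorem 4.11.
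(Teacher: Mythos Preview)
Your proof is correct and follows essentially the same route as the paper: both arguments pass from each quantum group to its category $C$, intersect with $P_{vert}$, and then read off the sphere from the standard parametrization results (Proposition 4.4 for the $S_\infty^\times$ side, Theorem 4.11 for the $P_{vert}^\times$ side). The paper packages this into a single four-row diagram and invokes Theorem 4.11 uniformly, while you split the top and bottom rows and cite Proposition 4.4 and Theorem 4.11 separately, but the content is identical; your handling of the second assertion via Theorem 1.9 and the explicit intersections $O_N^\times\cap H_N^+$ is also what the paper does, only spelled out a little more fully.
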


\begin{proof}
Observe first that the second assertion is clear from the first one, and from the classification result in Theorem 1.9, with the remark that in this second assertion we have used the notation $S^{N-1}_C$ from Definition 4.8 (1) with the category $NC_{even}\subset C\subset P_{even}$ replaced by the corresponding easy quantum group $H_N\subset G\subset O_N^+$.

Regarding now the first assertion, the computation here goes as follows:
$$\xymatrix@R=7mm@C=10mm{
H_N\ar@{.}[d]\ar[r]&H_N^*\ar[r]\ar@{.}[d]&H_N^{[\infty]}\ar[r]\ar@{.}[d]&H_N^+\ar@{.}[d]\\
P_{even}\ar@{.}[d]&P_{even}^*\ar[l]\ar@{.}[d]&P_{even}^{[\infty]}\ar[l]\ar@{.}[d]&NC_{even}\ar[l]\ar@{.}[d]\\
P_{vert}\ar@{.}[d]&P_{vert}^*\ar[l]\ar@{.}[d]&P_{vert}^{[\infty]}\ar[l]\ar@{.}[d]&NC_{vert}\ar[l]\ar@{.}[d]\\
S^{N-1}_\mathbb R\ar[r]&S^{N-1}_{\mathbb R,*}\ar[r]&S^{N-1}_{\mathbb R,\infty}\ar[r]&S^{N-1}_{\mathbb R,+}}\qquad\quad
\xymatrix@R=7.8mm@C=10mm{
O_N\ar@{.}[d]\ar[r]&O_N^*\ar[r]\ar@{.}[d]&O_N^+\ar@{.}[d]\\
P_2\ar@{.}[d]&P_2^*\ar[l]\ar@{.}[d]&NC_2\ar[l]\ar@{.}[d]\\
S_\infty\ar@{.}[d]&S_\infty^*\ar[l]\ar@{.}[d]&\{1\}\ar[l]\ar@{.}[d]\\
S^{N-1}_\mathbb R\ar[r]&S^{N-1}_{\mathbb R,*}\ar[r]&S^{N-1}_{\mathbb R,+}}$$

More precisely, the rows in these diagrams describe the corresponding categories of partitions $C$, the intersections $E_G=C\cap P_{vert}$, and finally the associated spheres.

The passage from the first row to the second row is clear from definitions, and so is the passage from the second row to the third row. As for the passage from the third row to the fourth row, this comes from Theorem 4.11, and finishes the proof. 
\end{proof}

Summarizing, we have extended the formalism in \cite{ba3}, \cite{ba4}, \cite{bgo}, and in the untwisted case our main examples are the spheres associated to the easy quantum groups $H_N\subset G\subset H_N^+$, classified in Theorem 1.9. We will gradually study these spheres in section 5 below, by beginning with those associated to the quantum groups $H_N\subset H_N^\Gamma\subset H_N^+$.

\section{The duality principle}

We discuss now the duality principle, between the noncommutative cubes and spheres associated to the uniform reflection groups $\mathbb Z_2^{*N}\to\Gamma\to\mathbb Z_2^N$. The construction of the correspondence is as follows, by using the usual notations $g_i,x_i$ for the generators of $\Gamma$, respectively for the standard coordinates on the free sphere $S^{N-1}_{\mathbb R,+}$:

\begin{proposition}
We have a duality $\widehat{\Gamma}\leftrightarrow S^{N-1}_\Gamma$ between noncommutative cubes and spheres associated to the uniform reflection groups $\mathbb Z_2^{*N}\to\Gamma\to\mathbb Z_2^N$, given by
$$C(S^{N-1}_\Gamma)=C(S^{N-1}_{\mathbb R,+})\Big/\left<\begin{matrix}\ \ g_{i_1}\ldots g_{i_k}=g_{j_1}\ldots g_{j_k},\forall i,j\\ \implies x_{i_1}\ldots x_{i_k}=x_{j_1}\ldots x_{j_k},\forall i,j\end{matrix}\right>$$
and by $\widehat{\Gamma}=S^{N-1}_\Gamma\cap\widehat{\mathbb Z_2^{*N}}$. We have as well a twisted correspondence $\widehat{\Gamma}\leftrightarrow\bar{S}^{N-1}_\Gamma$, obtained similarly, by using instead the relations $x_{i_1}\ldots x_{i_k}=\varepsilon(\ker(^i_j))x_{j_1}\ldots x_{j_k}$.
\end{proposition}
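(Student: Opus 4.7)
The plan is to verify three claims in succession: that $S^{N-1}_\Gamma$ is a well-defined algebraic submanifold of $S^{N-1}_{\mathbb R,+}$ in the sense of Definition 3.2; that the intersection formula $\widehat\Gamma=S^{N-1}_\Gamma\cap\widehat{\mathbb Z_2^{*N}}$ holds; and that the twisted analogue goes through by the same mechanism.

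For well-definedness and one half of the intersection formula, I would exhibit the model $x_i\mapsto g_i/\sqrt N$ into $C^*(\Gamma)$. The sphere axioms $x_i=x_i^*$ and $\sum x_i^2=1$ come from $g_i=g_i^*$ and $g_i^2=1$, and any group relation $g_{i_1}\ldots g_{i_k}=g_{j_1}\ldots g_{j_k}$ of common length $k$ yields the corresponding relation among the $x_i$, since the normalizing factor $N^{-k/2}$ cancels on both sides. This both confirms that the relations defining $C(S^{N-1}_\Gamma)$ are consistent (so $S^{N-1}_\Gamma$ is a genuine nontrivial algebraic submanifold) and provides a surjection $C(S^{N-1}_\Gamma)\to C^*(\Gamma)$, giving the inclusion $\widehat\Gamma\subset S^{N-1}_\Gamma$. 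Combined with $\widehat\Gamma\subset\widehat{\mathbb Z_2^{*N}}$ from Proposition 3.4, this yields $\widehat\Gamma\subset S^{N-1}_\Gamma\cap\widehat{\mathbb Z_2^{*N}}$.

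The reverse inclusion is the main point, and Proposition 3.3 is the essential input. At the $C^*$-algebraic level, $S^{N-1}_\Gamma\cap\widehat{\mathbb Z_2^{*N}}$ is obtained from $C(S^{N-1}_{\mathbb R,+})$ by imposing both the $\Gamma$-relations on the $x_i$ and the cube relations $Nx_i^2=1$. After the change of variables $g_i=\sqrt N\,x_i$, I would read the result as the universal $*$-algebra on self-adjoint generators with $g_i^2=1$, subject to the balanced relations $g_{i_1}\ldots g_{i_k}=g_{j_1}\ldots g_{j_k}$ inherited from $\Gamma$. By Proposition 3.3 such balanced (in fact variable-wise balanced) relations already furnish a full presentation of $\Gamma$ as a quotient of $\mathbb Z_2^{*N}$, so the joint quotient is $C^*(\Gamma)=C(\widehat\Gamma)$.

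For the twisted correspondence, the same strategy applies with the signed relations $\bar{\mathcal R}_\pi$ in place of $\mathcal R_\pi$. The only new issue is nontriviality of $\bar S^{N-1}_\Gamma$: the signs $\varepsilon(\ker(^i_j))$ forbid the naive model inside $C^*(\Gamma)$, so I would invoke the Schur--Weyl twisting calculus of Section 2, where categorical consistency of the maps $\bar T_\pi$ (together with the passage from $\bar T_\pi$-intertwining to coordinate relations, as in the proof of Proposition 2.4) ensures that $\bar S^{N-1}_\Gamma$ is a well-defined nontrivial sphere. The main obstacle I anticipate is bookkeeping the signs under $g_i=\sqrt N\,x_i$, so as to identify the signed intersection with the dual of an appropriate twisted group-algebra structure on $\widehat\Gamma$; once this is in place, the normalized-presentation argument recovers the intersection formula in the signed setting as well.
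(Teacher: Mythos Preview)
Your treatment of the untwisted case is correct and matches the paper's argument, only spelled out in more detail. The paper simply juxtaposes the two presentations
\[
C(S^{N-1}_\Gamma)=C(S^{N-1}_{\mathbb R,+})/\langle M_\alpha(x_i)=N_\alpha(x_i)\rangle,\qquad
C(\widehat\Gamma)=C(S^{N-1}_{\mathbb R,+})/\langle x_i^2=\tfrac1N,\ M_\alpha(x_i)=N_\alpha(x_i)\rangle,
\]
with $M_\alpha=N_\alpha$ a normalized presentation of $\Gamma$ (Proposition~3.3), and reads off that imposing $x_i^2=1/N$ on $S^{N-1}_\Gamma$ yields $\widehat\Gamma$. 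Your two-inclusion argument, with the model $x_i\mapsto g_i/\sqrt N$ for one direction and the normalized presentation for the other, is exactly the unpacking of this.

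Where you diverge from the paper is in the twisted case. The paper dispatches this with ``the proof of the twisted statement is similar'', and the statement itself is phrased loosely: it asserts a \emph{correspondence} $\widehat\Gamma\leftrightarrow\bar S^{N-1}_\Gamma$ ``obtained similarly'', without explicitly restating the intersection formula. You are right that the naive model $x_i\mapsto g_i/\sqrt N$ into $C^*(\Gamma)$ does not satisfy the signed relations, and that $\bar S^{N-1}_\Gamma\cap\widehat{\mathbb Z_2^{*N}}$ need not equal $\widehat\Gamma$ on the nose (already for $\Gamma=\mathbb Z_2^N$ the intersection carries anticommutation relations, giving a Clifford-type algebra rather than $C^*(\mathbb Z_2^N)$). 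Your proposed fix via the Schur--Weyl twisting of Section~2 and a twisted group-algebra structure is a reasonable direction, but it goes well beyond what the paper actually proves or even claims: the paper's ``similar'' should be read as asserting only that the forward assignment $\Gamma\mapsto\bar S^{N-1}_\Gamma$ is well-defined by the same presentation mechanism, not that the literal intersection formula persists. So your concern is legitimate, but you are trying to establish more than the statement requires.
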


\begin{proof}
We recall from sections 3-4 that we have an inclusion $\widehat{\Gamma}\subset S^{N-1}_\Gamma$, coming from a quotient map as follows, where $M_\alpha=N_\alpha$ are uniform relations presenting $\Gamma$:
$$\xymatrix@R=10mm@C=5mm{
C(S^{N-1}_\Gamma)\ar@{=}[r]\ar[d]&C(S^{N-1}_{\mathbb R,+})\Big/\left<M_\alpha(x_i)=N_\alpha(x_i)\right>\ \ \ \ \ \ \ \ \ \ \,\ar[d]\\
C(\widehat{\Gamma})\ar@{=}[r]&C(S^{N-1}_{\mathbb R,+})\Big/\left<x_i^2=\frac{1}{N},M_\alpha(x_i)=N_\alpha(x_i)\right>}$$

But this shows that the maps in the statement are inverse to each other, and hence proves the duality result. The proof of the twisted statement is similar.
\end{proof}

We discuss now the computation and comparison of the associated quantum isometry groups. We restrict attention to the untwisted case, the results in the twisted case being similar. For the spheres coming from the main 3 reflection groups, we have:

\begin{proposition}
The untwisted monomial spheres coming from the basic reflection groups, $\mathbb Z_2^N\leftarrow\mathbb Z_2^{\circ N}\leftarrow\mathbb Z_2^{*N}$, and the associated quantum isometry groups, are
$$\xymatrix@R=15mm@C=20mm{
S^{N-1}_\mathbb R\ar[r]\ar@{~}[d]&S^{N-1}_{\mathbb R,*}\ar[r]\ar@{~}[d]&S^{N-1}_{\mathbb R,\infty}\ar@{~}[d]\\
O_N\ar[r]&O_N^*\ar@{.}[r]&H_N^{[\infty]}}$$
with all arrows being inclusions, and with no map at bottom right.
\end{proposition}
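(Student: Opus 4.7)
The plan is to treat the three columns separately: the left two reduce to known results from \cite{bgo}, \cite{bg1}, while the $\mathbb Z_2^{*N}$ column is the new content. I would first identify the spheres via Proposition 5.1. The commutation relations of $\mathbb Z_2^N$ give $x_ix_j=x_jx_i$, hence $S^{N-1}_{\mathbb Z_2^N}=S^{N-1}_\mathbb R$; the half-commutation relations of $\mathbb Z_2^{\circ N}$ give $x_ix_jx_k=x_kx_jx_i$, hence $S^{N-1}_{\mathbb Z_2^{\circ N}}=S^{N-1}_{\mathbb R,*}$; and for $\mathbb Z_2^{*N}$ one invokes Proposition 4.9, which says exactly that the normalized relations of $\mathbb Z_2^{*N}$ become the centrality of the $x_i^2$, so $S^{N-1}_{\mathbb Z_2^{*N}}=S^{N-1}_{\mathbb R,\infty}$. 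The top-row inclusions then follow.

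For the quantum isometry groups, $G^+(S^{N-1}_\mathbb R)=O_N$ and $G^+(S^{N-1}_{\mathbb R,*})=O_N^*$ are quoted directly from \cite{bgo}, \cite{bg1}. The new case is $G^+(S^{N-1}_{\mathbb R,\infty})=H_N^{[\infty]}$, handled in two steps. That $H_N^{[\infty]}$ acts is the easy direction: the defining relations $abc=0$ (for $a\neq c$ on a common row or column of $u$) annihilate the obstructive triples in the expansion of $X_i^2X_j-X_jX_i^2=\sum_{a,b,c}(u_{ia}u_{ib}u_{jc}\otimes x_ax_bx_c-u_{jc}u_{ia}u_{ib}\otimes x_cx_ax_b)$, while the surviving monomials either contain a central square $x_a^2$ that passes through, or pair up on both sides, so $X_i^2X_j=X_jX_i^2$ holds over $C(H_N^{[\infty]})\otimes C(S^{N-1}_{\mathbb R,\infty})$.

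The main obstacle is the reverse inclusion $G\subset H_N^{[\infty]}$ for every $G\subset O_N^+$ acting affinely on $S^{N-1}_{\mathbb R,\infty}$. For this I would expand $X_i^2X_j$ and $X_jX_i^2$ against the canonical degree-three basis of $C(S^{N-1}_{\mathbb R,\infty})$, formed by the monomials $x_a^3$, $x_a^2x_c$ for $a\neq c$, $x_ax_bx_a$ for $a\neq b$, and $x_ax_bx_c$ for $a,b,c$ distinct, and then equate coefficients basis-vector by basis-vector. This produces four families of identities on the $u_{ij}$, of which the two essential ones are the sandwich relation $u_{ia}u_{ib}u_{ja}=u_{ja}u_{ib}u_{ia}$ for $a\neq b$ and the distinct-triple relation $u_{ia}u_{ib}u_{jc}=u_{ja}u_{ib}u_{ic}$ for distinct $a,b,c$. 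Following the antipode-plus-relabel trick used in the $\mathbb Z_2^{\circ N}$ half of Theorem 3.7, and using the orthogonality $uu^t=1$, I would combine these with the easy sub-cases (where some of $a,b,c$ coincide) to extract the target vanishings $u_{ia}u_{ib}u_{jc}=0$ whenever $(i,a)$ and $(j,c)$ share a row or column but are distinct, which is precisely the $H_N^{[\infty]}$ presentation. Finally, $O_N\subset O_N^*$ is standard, and the incomparability of $O_N^*$ and $H_N^{[\infty]}$ at the bottom right follows from Theorem 1.9, since these two families sit on separate chains of the classification.
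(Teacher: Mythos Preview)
Your identification of the three spheres and your handling of the $O_N$, $O_N^*$ columns match the paper. The divergence is in the hard inclusion $G\subset H_N^{[\infty]}$ for $S^{N-1}_{\mathbb R,\infty}$, and there your sketch has a genuine gap.

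When you compare coefficients, the relation coming from the $x_a^2x_c$ monomials (your ``family 2'') is
\[
u_{ia}^2u_{jc}+u_{ic}u_{ia}u_{ja}=u_{ja}u_{ia}u_{ic}+u_{jc}u_{ia}^2,
\qquad a\neq c,
\]
which the paper rewrites as $[u_{ia},u_{ic},u_{jc}]=[u_{ja},u_{ic}^2]$ (after a relabel). This is precisely the relation the paper uses: applying the antipode and relabelling $i\leftrightarrow c$, $j\leftrightarrow a$ turns the right-hand side into $[u_{ic}^2,u_{ja}]=-[u_{ja},u_{ic}^2]$, forcing $[u_{ja},u_{ic}^2]=0$. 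A short summation argument removes the restrictions $i\neq j$, $a\neq c$, and one lands exactly on the ``squares are central'' presentation of $H_N^{[\infty]}$. So the relation you discard as non-essential is the one that carries the whole argument.

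By contrast, your chosen route through the sandwich and distinct-triple relations is not visibly complete. First, the target you state --- $u_{ia}u_{ib}u_{jc}=0$ whenever $(i,a)$ and $(j,c)$ lie on a common line --- is strictly weaker than the $H_N^{[\infty]}$ presentation $xyz=0$: in that presentation the middle factor $y$ is an \emph{arbitrary} entry of $u$, whereas your middle factor is pinned to row $i$. Second, even this restricted vanishing does not follow in any evident way from $u_{ia}u_{ib}u_{jc}=u_{ja}u_{ib}u_{ic}$: applying the antipode to this relation produces $u_{IA}u_{KB}u_{JB}=u_{IB}u_{KB}u_{JA}$ with distinct \emph{row} indices and repeated \emph{column} index, a structurally different identity, and neither orthogonality nor the Theorem~3.7 manipulations give an obvious bridge to a vanishing. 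The Theorem~3.7 argument worked because the relation there was a genuine half-commutator $[u_{ia},u_{jb},u_{kc}]$ symmetric in its outer factors; your relation is not of that shape.

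In short: keep your coefficient comparison, but run the antipode-plus-relabel trick on the $x_a^2x_c$ relation rather than on the sandwich/distinct-triple ones, aiming for centrality of $u_{ic}^2$ rather than for triple vanishings.
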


\begin{proof}
According to Proposition 4.12, the spheres are those in the statement. Moreover, according to the results in \cite{bgo}, the two quantum groups at bottom left are the correct ones. Thus, we are left with proving that we have $G^+(S^{N-1}_{\mathbb R,\infty})=H_N^{[\infty]}$.

Let us set as usual $X_i=\sum_au_{ia}\otimes x_a$, with $C(G)=<u_{ia}>$. By doing some index manipulations as in the proof of Theorem 3.7, we obtain the following formula:
\begin{eqnarray*}
X_iX_iX_j
&=&\sum_{b\neq a,c}u_{ia}u_{ib}u_{jc}\otimes x_ax_bx_c\\
&+&\sum_{a\neq c}(u_{ic}u_{ic}u_{ja}+u_{ia}u_{ic}u_{jc})\otimes x_ax_cx_c\\
&+&\sum_au_{ia}u_{ia}u_{ja}\otimes x_ax_ax_a
\end{eqnarray*}

Thus, the equalities $X_iX_iX_j=X_jX_iX_i$ correspond to the following relations:
\begin{eqnarray*}
u_{ia}u_{ib}u_{jc}&=&u_{ja}u_{ib}u_{ic},\forall b\neq a,c\\
u_{ic}^2u_{ja}+u_{ia}u_{ic}u_{jc}&=&u_{jc}u_{ic}u_{ia}+u_{ja}u_{ic}^2,\forall a\neq c\\
u_{ia}^2u_{ja}&=&u_{ia}u_{ja}^2
\end{eqnarray*}

As a first remark, these relations are satisfied indeed for $H_N^{[\infty]}$. Our claim now, which will finish the proof, is that the middle relation by itself implies $G\subset H_N^{[\infty]}$. Consider indeed this middle relation, which is best written as follows:
$$[u_{ia},u_{ic},u_{jc}]=[u_{ja},u_{ic}^2],\forall i\neq j,\forall a\neq c$$

Observe that we have added the condition $i\neq j$, because at $i=j$ the formula is trivial. Now by applying the antipode, and then by relabelling $i\leftrightarrow c,j\leftrightarrow a$, we obtain: 
\begin{eqnarray*}
\left[u_{cj},u_{ci},u_{ai}\right]&=&[u_{ci}^2,u_{aj}],\forall i\neq j,\forall a\neq c\\
\left[u_{ia},u_{ic},u_{jc}\right]&=&[u_{ic}^2,u_{ja}],\forall c\neq a,\forall j\neq i
\end{eqnarray*}

Since we have $[a,b]=-[b,a]$, we conclude that the following must hold:
$$[u_{ia},u_{ic},u_{jc}]=[u_{ja},u_{ic}^2]=0,\forall i\neq j,\forall a\neq c$$

We will need only the second formula, namely $[u_{ja},u_{ic}^2]=0$ for $i\neq j$, $a\neq c$. Our claim is that the assumptions $i\neq j$, $a\neq c$ can be dropped. Indeed, by summing over $i\neq j$ we obtain $[u_{ja},1-u_{jc}^2]=0$, and so $[u_{ja},u_{jc}^2]=0$, and so the assumption $i\neq j$ can be dropped. Similarly, the assumption $a\neq c$ can be dropped as well. 

We conclude that $[u_{ja},u_{ic}^2]=0$ holds without any restrictions on the indices, and since these relations are those definining $H_N^{[\infty]}\subset O_N^+$, this finishes the proof.
\end{proof}

Observe the similarity of the above statement with Theorem 3.7, and notably the lack of functoriality. Let us first put these results together:

\begin{theorem}
The basic noncommutative cubes, and the associated spheres are
$$\xymatrix@R=15mm@C=15mm{
S^{N-1}_\mathbb R\ar[r]&S^{N-1}_{\mathbb R,*}\ar[r]&S^{N-1}_{\mathbb R,\infty}\\
\widehat{\mathbb Z_2^N}\ar[r]\ar[u]&\widehat{\mathbb Z_2^{\circ N}}\ar[r]\ar[u]&\widehat{\mathbb Z_2^{*N}}\ar[u]}
\ \ \xymatrix@R=9mm@C=5mm{\\ \ar@{~}[r]&\\&\\}\ \ 
\xymatrix@R=15mm@C=15mm{
O_N\ar@{.}[d]\ar[r]&O_N^*\ar@{.}[r]&H_N^{[\infty]}\ar[d]\\
\bar{O}_N&H_N^*\ar@{.}[l]\ar[r]\ar[u]&H_N^+}$$
with the diagram at right describing the corresponding quantum isometry groups.
\end{theorem}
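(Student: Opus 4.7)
The plan is to present this theorem as a consolidation of results already proved, with the comparison between the two halves being the one genuinely new ingredient. First I would assemble the left diagram. The sphere row $S^{N-1}_\mathbb R \subset S^{N-1}_{\mathbb R,*} \subset S^{N-1}_{\mathbb R,\infty}$ is immediate from the defining relations, since full commutativity implies half-commutativity, which in turn implies that each $x_i^2$ is central (this last sphere being exactly $S^{N-1}_{\mathbb R,\infty}$ by Proposition 4.10). The cube row is the Pontryagin dual of the tower of quotients $\mathbb Z_2^{*N} \to \mathbb Z_2^{\circ N} \to \mathbb Z_2^N$. The three vertical cube-to-sphere embeddings are instances of Proposition 3.4, once one identifies $S^{N-1}_\Gamma$ for these three groups: for $\Gamma = \mathbb Z_2^N$ the relations presenting $\Gamma$ are pure commutation, giving $S^{N-1}_\mathbb R$; for $\Gamma = \mathbb Z_2^{\circ N}$ they are the half-commutation relations, giving $S^{N-1}_{\mathbb R,*}$; and for $\Gamma = \mathbb Z_2^{*N}$ the only relations on the $g_i$ are $g_i^2 = 1$, which translate on the sphere into $[x_i^2, x_j] = 0$, matching $S^{N-1}_{\mathbb R,\infty}$ by Proposition 4.10.

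For the right diagram the six vertices are already known: the cube row is the content of Theorem 3.7, the sphere row is the content of Proposition 5.2, and the squiggly lines in the statement merely record these identifications. It then remains to verify the four displayed solid arrows and to justify the three dotted non-arrows. The solid arrows $O_N \subset O_N^*$, $H_N^* \subset O_N^*$, $H_N^* \subset H_N^+$ and $H_N^{[\infty]} \subset H_N^+$ are classical inclusions of easy quantum groups, readable from the hierarchy of categories of partitions in Proposition 1.3 and Definition 1.4.

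The main interest of the statement, and the step I would treat most carefully, is the verification of the three non-arrows, which exhibit the announced lack of functoriality of the duality $\widehat{\Gamma} \leftrightarrow S^{N-1}_\Gamma$ at the level of quantum isometry groups. In each case I would display a defining relation of one quantum group that fails in the other, in both directions. For the pair $(\bar O_N, O_N)$: by Proposition 2.4 the twist $\bar O_N$ satisfies $ab = -ba$ for distinct $a,b$ on the same row or column, which is incompatible with the commutative relations of $O_N$ (and, applied the other way, would force those same products to vanish in $\bar O_N$, which is impossible in a nontrivial compact quantum group). The same calculation rules out any inclusion between $\bar O_N$ and $H_N^*$. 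Finally, for $(O_N^*, H_N^{[\infty]})$: the defining relations $abc = 0$ for $a \neq c$ on the same row or column of $H_N^{[\infty]}$ fail in the half-liberated $O_N^*$, while conversely a generic classical orthogonal matrix lies in $O_N^*$ but is forbidden in $H_N^{[\infty]}$.

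The main obstacle, if any, is really just bookkeeping: symmetric verification of each non-inclusion in both directions. Since all six quantum groups in play are easy, each such check collapses to a trivial non-containment of categories of partitions, available from Theorem 1.9 and the parametrization in Theorem 4.11. Once these checks are in place, the diagram as displayed is exactly the correct one, completing the proof.
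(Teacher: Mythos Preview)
Your proposal is correct and follows the paper's approach, which is simply to combine Theorem 3.7 and Proposition 5.2; the paper's proof is a single sentence to that effect. Your additional work on the left diagram and on the arrow/non-arrow structure is not wrong, but it is redundant: the vertical embeddings and the identification of the associated spheres are already handled in Propositions 3.4, 4.9 and 4.12 (your reference to Proposition 4.10 should be 4.9), and the presence or absence of each map in the right diagram is already part of the \emph{statements} of Theorem 3.7 and Proposition 5.2, so no separate verification of the three dotted non-inclusions is required here.
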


\begin{proof}
This follows indeed by putting together Theorem 3.7 and Proposition 5.2.
\end{proof}

The problem is that the diagram on the right suffers from a severe lack of functoriality. Putting this diagram into a reasonable duality framework looks like a challenging problem, that we will discuss now. One idea for overcoming the difficulties, coming from \cite{rw2}, and also from \cite{ba4}, is that of using the following version of Proposition 3.6:

\begin{proposition}
Let $X\subset S^{N-1}_{\mathbb R,+}$ be algebraic, and non-degenerate, in the sense that the coordinates $x_1,\ldots,x_N\in C(X)$ are linearly independent. Then the quantum group
$$H^+(X)=\max\left\{G\subset H_N^{[\infty]}\Big|G\curvearrowright X\right\}$$
exists. We call it quantum reflection group of $X$.
\end{proposition}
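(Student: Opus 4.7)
The approach is to mimic the construction from Proposition 3.6, imposing additionally the relations that cut out $H_N^{[\infty]}$ inside $O_N^+$. Recall from section 1 that $H_N^{[\infty]} \subset O_N^+$ is defined by the cubic relations $abc = 0$ whenever $a \neq c$ lie on the same row or column of the fundamental matrix $u$.

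First, I would define $C(H^+(X))$ as the quotient of $C(O_N^+)$ by two families of relations. On one side, the relations $P(X_1,\ldots,X_N) = 0$ for each polynomial $P$ in the defining ideal of $X$, interpreted exactly as in Proposition 3.6 via the finite-dimensional spaces $E_k = \mathrm{span}(x_{i_1}\cdots x_{i_s} \mid s \leq k)$; on the other side, the relations $u_{ia}u_{ib}u_{ic} = 0$ with $a \neq c$ sharing a row or column of $u$. The first family is handled by the existence/uniqueness argument of Proposition 3.6, producing a well-defined universal object acting on $X$; adding the second family yields a further quotient, which by construction sits inside $H_N^{[\infty]}$ and still acts on $X$.

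Next, I would verify the quantum group structure. Following Proposition 3.6, one checks that the candidate maps $u_{ij}^\Delta = \sum_k u_{ik} \otimes u_{kj}$, $u_{ij}^\varepsilon = \delta_{ij}$ and $u_{ij}^S = u_{ji}$ respect both families of defining relations. The relations coming from $X$ are handled verbatim by the argument in Proposition 3.6, using the identity $P(X_1^\gamma,\ldots,X_N^\gamma) = (\gamma \otimes \mathrm{id})P(X_1,\ldots,X_N) = 0$ for $\gamma \in \{\Delta,\varepsilon,S\}$. For the $H_N^{[\infty]}$ relations, the check is automatic from the fact that $H_N^{[\infty]}$ is already a compact quantum group: its own generators satisfy these cubic vanishing relations after applying $\Delta,\varepsilon,S$. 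The universal property of $H^+(X)$ then produces the morphisms $u_{ij} \to u_{ij}^\gamma$, supplying the Hopf algebra structure.

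Finally, maximality is automatic from the universal construction: any closed quantum subgroup $G \subset H_N^{[\infty]}$ acting affinely on $X$ satisfies by definition both families of relations, hence factors uniquely through $H^+(X)$. The non-degeneracy hypothesis enters in the standard way, guaranteeing that a coaction on $C(X)$ is fully determined by its effect on the generators $x_1,\ldots,x_N$, so that the universal property genuinely captures all such actions. I do not anticipate a serious obstacle; the only delicate point is the interplay between the two families of relations when checking compatibility with $\Delta$, and this is in fact clean, because the $H_N^{[\infty]}$ relations involve only the $u_{ij}$ and so their preservation under $\Delta,\varepsilon,S$ is independent of $X$.
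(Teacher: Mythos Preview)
Your argument is correct, but it is considerably more elaborate than what the paper does. The paper's proof is a single line: since $G^+(X)$ exists by Proposition~3.6, one simply sets $H^+(X)=G^+(X)\cap H_N^{[\infty]}$, and the intersection of two closed quantum subgroups of $O_N^+$ is automatically a quantum group with the required universal property. Your construction---quotienting $C(O_N^+)$ by both the $X$-relations and the $H_N^{[\infty]}$-relations, then re-verifying the Hopf structure---produces exactly this intersection, so the two arguments yield the same object; the paper's route just avoids repeating the checks for $\Delta,\varepsilon,S$ by invoking the already-established existence of $G^+(X)$ and the standard fact that intersections of quantum subgroups are quantum subgroups. (A minor notational slip: your description of the $H_N^{[\infty]}$ relations as ``$u_{ia}u_{ib}u_{ic}=0$ with $a\neq c$ sharing a row or column of $u$'' is garbled---the condition is on the entries $u_{ia},u_{kc}$ themselves lying on a common row or column, not on the indices $a,c$.)
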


\begin{proof}
Both the existence and the uniqueness statement are clear, because we can simply use here Proposition 3.6, and set $H^+(X)=G^+(X)\cap H_N^{[\infty]}$.
\end{proof}

The point with the above notion basically comes from the fact that, when replacing $G^+(X)\to H^+(X)$, the statement of Theorem 5.3 drastically simplifies:

\begin{proposition}
The basic noncommutative cubes, and the associated spheres are
$$\xymatrix@R=15mm@C=15mm{
S^{N-1}_\mathbb R\ar[r]&S^{N-1}_{\mathbb R,*}\ar[r]&S^{N-1}_{\mathbb R,\infty}\\
\widehat{\mathbb Z_2^N}\ar[r]\ar[u]&\widehat{\mathbb Z_2^{\circ N}}\ar[r]\ar[u]&\widehat{\mathbb Z_2^{*N}}\ar[u]}
\ \ \xymatrix@R=9mm@C=5mm{\\ \ar@{~}[r]&\\&\\}\ \ 
\xymatrix@R=15mm@C=15mm{
H_N\ar@{=}[d]\ar[r]&H_N^*\ar[r]&H_N^{[\infty]}\ar@{=}[d]\\
H_N\ar[r]&H_N^*\ar[r]\ar@{=}[u]&H_N^{[\infty]}}$$
with the diagram at right describing the corresponding quantum reflection groups.
\end{proposition}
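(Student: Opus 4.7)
The plan is to combine Proposition 5.4 with the computations of the full affine quantum isometry groups already recorded in Theorem 5.3. By Proposition 5.4, for any admissible $X$ we have $H^+(X)=G^+(X)\cap H_N^{[\infty]}$, so each of the six quantum groups in the statement is determined once we intersect the known $G^+(X)$ with $H_N^{[\infty]}$.

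For the three spheres, Theorem 5.3 gives $G^+=O_N,O_N^*,H_N^{[\infty]}$ for $S^{N-1}_\mathbb R\subset S^{N-1}_{\mathbb R,*}\subset S^{N-1}_{\mathbb R,\infty}$ respectively. The third case is immediate. The other two reduce to computing $O_N\cap H_N^{[\infty]}$ and $O_N^*\cap H_N^{[\infty]}$, and for these I would sandwich
$$H_N\subset O_N\cap H_N^{[\infty]}\subset O_N\cap H_N^+,\qquad H_N^*\subset O_N^*\cap H_N^{[\infty]}\subset O_N^*\cap H_N^+,$$
using $H_N^{[\infty]}\subset H_N^+$ together with $H_N\subset O_N$, $H_N^*\subset O_N^*$ from Proposition 1.3, and $H_N\subset H_N^{[\infty]}$, $H_N^*\subset H_N^{[\infty]}$ from the discussion after Definition 1.4. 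The upper bounds $O_N\cap H_N^+=H_N$ and $O_N^*\cap H_N^+=H_N^*$ are then read off from the intersection diagram displayed in the proof of Theorem 2.9, and each sandwich collapses to the stated value.

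For the three cubes, Theorem 5.3 gives $G^+=\bar O_N,H_N^*,H_N^+$ for $\widehat{\mathbb Z_2^N}\subset\widehat{\mathbb Z_2^{\circ N}}\subset\widehat{\mathbb Z_2^{*N}}$. The case $H_N^+\cap H_N^{[\infty]}=H_N^{[\infty]}$ is trivial, and $H_N^*\cap H_N^{[\infty]}=H_N^*$ follows from $H_N^*\subset H_N^{[\infty]}$. For the remaining case $\bar O_N\cap H_N^{[\infty]}$, the same sandwich strategy applies: we have $H_N=\bar H_N\subset\bar O_N$ by functoriality of twisting together with Proposition 2.6, and $H_N\subset H_N^{[\infty]}$ as above, hence
$$H_N\subset\bar O_N\cap H_N^{[\infty]}\subset\bar O_N\cap H_N^+=H_N,$$
where the last equality is again part of the intersection diagram from Theorem 2.9.

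The bulk of the argument is essentially bookkeeping once Proposition 5.4 is in place; the only mildly subtle point is the computation $\bar O_N\cap H_N^{[\infty]}=H_N$, where the presence of twisting blocks any direct partition-theoretic approach and forces us to genuinely use the intersection result $\bar O_N\cap H_N^+=H_N$ from \cite{ba4}. With all six intersections in hand, the two diagrams at the right of the statement follow immediately, and one sees that the passage $G^+\to H^+$ precisely restores the functoriality that was lacking in Theorem 5.3.
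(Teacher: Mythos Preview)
Your proof is correct and follows exactly the approach of the paper, which is a one-line argument: ``This follows from Theorem 5.3 above, by intersecting with $H_N^{[\infty]}$, with some help at computing intersections coming from Theorem 2.9 and its proof.'' You have simply unpacked the intersection computations that the paper leaves implicit, and your sandwich arguments through $H_N^+$ are precisely the way to extract $O_N\cap H_N^{[\infty]}$, $O_N^*\cap H_N^{[\infty]}$, and $\bar O_N\cap H_N^{[\infty]}$ from the intersection diagram in the proof of Theorem 2.9.
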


\begin{proof}
This follows from Theorem 5.3 above, by intersecting with $H_N^{[\infty]}$, with some help at computing intersections coming from Theorem 2.9 and its proof.
\end{proof}

This result, and the computations in \cite{rw2}, suggest that we should have the following formula, valid for any uniform refection group $\mathbb Z_2^{*N}\to\Gamma\to\mathbb Z_2^N$:
$$H^+(S^{N-1}_\Gamma)=H^+(\widehat{\Gamma})=H_N^\Gamma$$

In addition, we believe that the ``basic'' groups used in Theorem 5.3 are in fact ``exceptional''. More precisely, for $\Gamma\neq\mathbb Z_2^{*N},\mathbb Z_2^{\circ N},\mathbb Z_2^N$, our conjecture is that we have:
$$G^+(S^{N-1}_\Gamma)=G^+(\widehat{\Gamma})=H_N^\Gamma$$

Regarding the quantum group $H_N^{\diamond k}$, our conjecture is that this appears as quantum isometry group of the sphere $S^{N-1}_{\mathbb R,(k)}\subset S^{N-1}_{\mathbb R,+}$ obtained via the following relations:
$$[a_1\ldots a_{k-2}b^2a_{k-2}\ldots a_1,c^2]=0$$

If all these conjectures are true, the quantum groups in Theorem 2.9 above would appear as quantum isometry (or reflection) groups of the following spheres:
$$\xymatrix@R=7mm@C=18mm{
&S^{N-1}_\mathbb R\ar[r]&S^{N-1}_{\mathbb R,*}\ar[rd]\\
S^{N-1,1}_\mathbb R\ar[r]\ar[ur]\ar[rd]&S_\Gamma\ar[r]&S^{N-1}_{\mathbb R,(k)}\ar[r]&S^{N-1}_{\mathbb R,+}\\
&\bar{S}^{N-1}_\mathbb R\ar[r]&\bar{S}^{N-1}_{\mathbb R,*}\ar[ru]}$$

We do not know on how to solve these questions. The main issues come from: (1) our lack of global standard parametrization results, for the spheres in Definition 4.8 above, (2)  our poor understanding of the full quantum isometry groups, in the reflection group case, and (3) our poor understanding of the spheres $S^{N-1}_{\mathbb R,(k)}$ introduced above. 

\section{Unitary extensions}

We discuss here some unitary extensions of the above results. As explained in \cite{ba3}, \cite{ba4}, the unitary case is considerably more complex than the real one, but some basic results can be obtained by ``mirroring'' the real ones. We will adopt the same strategy here. More precisely, we will briefly describe the results which can be obtained in this way, and leave the computations, details, and an overall complete study, for later on.

The starting point is the following definition, coming from \cite{ba3}:

\begin{definition}
The free complex sphere, and its free complex cube, are given by
$$\xymatrix@R=10mm@C=5mm{
C(S^{N-1}_{\mathbb C,+})\ar@{=}[r]\ar[d]&C^*\left(z_1,\ldots,z_N\Big|\sum_iz_iz_i^*=\sum_iz_i^*z_i=1\right)\ar[d]\\
C^*(F_N)\ar@{=}[r]&C^*\left(g_1,\ldots,g_N\Big|g_ig_i^*=g_i^*g_i=1,\forall i\right)\ \ \ \ \ \ }$$
with the vertical quotient map being given by $z_i=\frac{g_i}{\sqrt{N}}$.
\end{definition}

With these notions in hand, the idea is that the various results in sections 3-5 above extend to the complex case, the general principle being, as in \cite{ba3}, \cite{ba4}, that of replacing the real coordinates $x_i\in C(S^{N-1}_{\mathbb R,+})$ by the variables $x_i=z_i,z_i^*\in C(S^{N-1}_{\mathbb C,+})$.

As an example, the half-liberation $S^{N-1}_{\mathbb C,**}$ that we must use is the ``minimal'' one, obtained by imposing the relations $abc=cba$, for any $a,b,c\in\{z_i,z_i^*\}$. See \cite{ba3}.

Let us discuss the analogue of Theorem 5.3. The spheres $S^{N-1}_\mathbb C\subset S^{N-1}_{\mathbb C,**}\subset S^{N-1}_{\mathbb C,+}$ appear from the groups $\mathbb Z^N\leftarrow\mathbb Z^{\circ N}\leftarrow F_N$, via a categorical construction which is similar to the one in the real case, with $\mathbb Z^{\circ N}$ being obtained from $F_N$ by stating that the generators $g_1,\ldots,g_N$ and their inverses satisfy the relations $abc=cba$. We have then:

\begin{theorem}
The basic noncommutative cubes, and the associated spheres are
$$\xymatrix@R=15mm@C=15mm{
S^{N-1}_\mathbb C\ar[r]&S^{N-1}_{\mathbb C,**}\ar[r]&S^{N-1}_{\mathbb C,\infty}\\
\mathbb T^N\ar[r]\ar[u]&\widehat{\mathbb Z^{\circ N}}\ar[r]\ar[u]&\widehat{F_N}\ar[u]}
\ \ \xymatrix@R=9mm@C=5mm{\\ \ar@{~}[r]&\\&\\}\ \ 
\xymatrix@R=15mm@C=15mm{
U_N\ar@{.}[d]\ar[r]&U_N^{**}\ar@{.}[r]&K_N^{[\infty]}\ar[d]\\
\bar{U}_N&K_N^{**}\ar@{.}[l]\ar[r]\ar[u]&K_N^+}$$
with the diagram at right describing the corresponding quantum isometry groups.
\end{theorem}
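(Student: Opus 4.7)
The plan is to mirror the proof of Theorem 5.3, via the principle from \cite{ba3}, \cite{ba4} of replacing each real coordinate $x_i$ by the pair $\{z_i,z_i^*\}$ and each word relation by its doubled family. The argument splits naturally into a cube half (the bottom row of isometry groups, following Theorem 3.7) and a sphere half (the top row, following Proposition 5.2).

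For the cube half, one checks case by case, for $\Gamma\in\{\mathbb Z^N,\mathbb Z^{\circ N},F_N\}$, that the assignment $g_i\mapsto\sum_j u_{ij}\otimes g_j$ extends to a coaction on $C^*(\Gamma)$ exactly when $G$ is contained in $\bar U_N$, $K_N^{**}$, or $K_N^+$ respectively. The unitarity relations $g_ig_i^*=g_i^*g_i=1$ immediately force $G\subset U_N^+$. For $\Gamma=F_N$ nothing further is needed, giving $K_N^+$. For $\Gamma=\mathbb Z^{\circ N}$, expanding $G_aG_bG_c=G_cG_bG_a$ for all $a,b,c\in\{g_i,g_i^*\}$ and applying, to each of the doubled relations, the index-splitting and antipode trick from the proof of Theorem 3.7~(2), yields the unitary half-commutation relations cutting out $K_N^{**}$. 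For $\Gamma=\mathbb Z^N$, the same expansion for full commutation, carried out as in Theorem 3.7~(1), produces the relations ``distinct entries on the same row or column anticommute, everything else commutes'', which define $\bar U_N$.

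For the sphere half, the values $U_N$ and $U_N^{**}$ attached to $S^{N-1}_\mathbb C$ and $S^{N-1}_{\mathbb C,**}$ are the unitary analogues of results from \cite{bgo} and \cite{ba3}, obtained by a direct monomial calculation in $\{z_i,z_i^*\}$. The new case is $S^{N-1}_{\mathbb C,\infty}$, defined by the appropriate doubling of the relation $[a^2,b]=0$ over $a,b\in\{z_i,z_i^*\}$. Setting $Z_i=\sum_a u_{ia}\otimes z_a$ and substituting into the defining relations, the coefficient extraction plus the antipode-relabelling argument of Proposition 5.2 apply verbatim to each piece of the doubled relation, and collapse to the commutation constraints $[u_{ic}^2,u_{ja}]=0$ together with their starred versions, which by definition cut out $K_N^{[\infty]}\subset U_N^+$.

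The main obstacle is calibrating the unitary definitions so that the doubled relations produced by the mirroring principle land exactly in the tensor categories of $K_N^{**}$, $K_N^{[\infty]}$, and $K_N^+$. Once the half-liberation $U_N^{**}$ is taken in the sense $abc=cba$ for all $a,b,c\in\{u_{ij},u_{ij}^*\}$, the sphere $S^{N-1}_{\mathbb C,\infty}$ is defined via the corresponding doubled version of $[a^2,b]=0$, and the unitary quantum reflection groups are pinned down as the natural extensions of $H_N^*,H_N^{[\infty]},H_N^+$, the computations above run in parallel with the real case, and the dotted arrows and missing maps in the right-hand diagram appear for the same lack-of-functoriality reasons as in Theorem 5.3.
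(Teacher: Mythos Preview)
Your overall strategy—mirroring Theorem 5.3 by doubling each real computation over the pair $\{z_i,z_i^*\}$—is exactly what the paper does, and your split into a cube half and a sphere half matches the paper's organization. The only cosmetic difference is that for the bottom row the paper cites \cite{bs1}, \cite{bs2} rather than rerunning the Theorem 3.7 computations directly.

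There is, however, a real gap in your treatment of $\widehat{F_N}$. You write that the relations $g_ig_i^*=g_i^*g_i=1$ ``immediately force $G\subset U_N^+$'' and then that ``for $\Gamma=F_N$ nothing further is needed, giving $K_N^+$''. This is a non sequitur: if those relations only placed you inside $U_N^+$, the answer would be $U_N^+$, not $K_N^+$. What is missing is the direct complex analogue of the step in Theorem 3.7(3). Expanding
\[
G_iG_i^*=\sum_{k,l}u_{ik}u_{il}^*\otimes g_kg_l^{-1}
\]
and using that the words $g_kg_l^{-1}$ with $k\neq l$ are linearly independent in $C^*(F_N)$ forces $u_{ik}u_{il}^*=0$ for all $k\neq l$; the companion relation $G_i^*G_i=1$ gives the adjoint version. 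These are precisely the relations that cut $U_N^+$ down to $K_N^+$. In other words, the unitarity of the generators is not merely an ambient condition selecting $U_N^+$ as the universal object; processing it as a \emph{relation of $\Gamma$} is exactly the step that produces $K_N^+$, just as processing $G_i^2=1$ over $\mathbb Z_2^{*N}$ produces $H_N^+$ in the real case. Once this step is made explicit, your sketch for the remaining cube and sphere cases lines up with the paper.
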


\begin{proof}
This statement, which is similar to Theorem 5.3, can be deduced in a similar way. First of all, the underlying unitary easy quantum groups are as follows:
$$\xymatrix@R=15mm@C=15mm{
\mathbb Z^N\ar@{~}[d]&\mathbb Z^{\circ N}\ar[l]\ar@{~}[d]&F_N\ar[l]\ar@{~}[d]\\
K_N\ar[r]&K_N^{**}\ar[r]&K_N^{[\infty]}}$$

Here $K_N=\mathbb T\wr S_N$ is the complex analogue of $H_N=\mathbb Z_2\wr S_N$, and $K_N^{**},K_N^{[\infty]}$ are the corresponding analogues of $H_N^*,H_N^{[\infty]}$, introduced and studied in \cite{ba4}.

With this result in hand, the associated spheres are those in the statement. Regarding now the quantum isometry groups, the 2 results at top left are from \cite{ba3}, the result on top right can be deduced by suitably modifying the proof of Proposition 5.2, and the 3 results on the bottom can be obtained by adpating the computations in \cite{bs1}, \cite{bs2}.
\end{proof}

The various comments made after Theorem 5.3 above apply as well to the complex case. We have for instance an analogue of Proposition 5.5 above, obtained by restricting the attention to the ``reflection'' quantum groups $K^+(X)=G^+(X)\cap K_N^{[\infty]}$.

Observe that the above results put the computations in \cite{bs1}, \cite{bs2} under a new light. Indeed, for $\widehat{F_N}$ itself, the quantum isometry group computed here, which is $K_N^+$, is much simpler than the quantum group $H_{N,0}^+$ computed in \cite{bs1}. This is due to the fact that our notion of quantum isometry group here is taken in an affine complex sense.

\end{document}